\documentclass[12pt]{article}
\usepackage{amsfonts,amssymb,latexsym,amsmath}



\oddsidemargin  0pt     
\evensidemargin 0pt
\topmargin 0pt
\headheight 0pt
\headsep 0pt
\topskip 0pt 
\textheight 8in
\textwidth 6.5in
\footskip 30pt
\marginparwidth 1in
\marginparsep 1ex
\newtheorem{theorem}{Theorem}[section]
\newtheorem{lemma}[theorem]{Lemma}
\newtheorem{corollary}[theorem]{Corollary}
\newtheorem{proposition}[theorem]{Proposition}

\newenvironment{proof}
{\par\addvspace{0.3cm}\noindent{\rm Proof. }}
{\nopagebreak\mbox{}\hfill $\Box$\par\addvspace{0.25cm}}

\renewcommand{\Re}{\mbox{\rm Re\,}}
\renewcommand{\Im}{\mbox{\rm Im\,}}

\newcommand{\C}{{\mathbb C}}
\newcommand{\Z}{{\mathbb Z}}
\newcommand{\T}{{\mathbb T}}
\renewcommand{\kappa}{\varkappa}
\newcommand{\qed}{\hfill $\Box$}
\newcommand{\be}{\begin{equation}}
\newcommand{\ee}{\end{equation}}

\newcommand{\bq}{\begin{eqnarray}}
\newcommand{\eq}{\end{eqnarray}}
\newcommand{\nn}{\nonumber}
\newcommand{\ba}{\begin{array}}
\newcommand{\ea}{\end{array}}
\newcommand{\bt}{\bar{\tau}}
\newcommand{\wt}[1]{\widetilde{#1}}
\newcommand{\iv}{^{-1}}
\newcommand{\iy}{\infty}
\newcommand{\al}{\alpha}

\newcommand{\cL}{{\cal L}}

\newcommand{\cA}{\mathcal{A}}

\newcommand{\cC}{\mathcal{C}}

\newcommand{\ta}{\tilde{a}}
\newcommand{\tb}{\tilde{b}}
\newcommand{\tc}{\tilde{c}}
\newcommand{\td}{\tilde{d}}
\newcommand{\tph}{\tilde{\phi}}
\newcommand{\tps}{\tilde{\psi}}

\newcommand{\hsp}{\hspace{\arraycolsep}} 
\newcommand{\eql}{\hsp=\hsp}
\newcommand{\slim}{\mbox{\rm s-lim}}
\newcommand{\tr}{\mathrm{trace}}

\newcommand{\twotwon}[4]{\left(\begin{array}{cc}#1&#2\\ #3&#4\end{array}\right)}
\newcommand{\twoc}[2]{\left(\begin{array}{c}#1\\#2\end{array}\right)}
\newcommand{\twor}[2]{\left(\begin{array}{cc}#1&#2\end{array}\right)}

\setlength{\parindent}{1cm}
\parskip = 0.1in


\begin{document}

\date{}
\title{Asymptotic formulas for determinants of a special class of Toeplitz + Hankel matrices
\thanks{2010 MSC: 47B35; \ keywords: Toeplitz operator, Hankel operator, Toeplitz plus Hankel operator }}
\author{Estelle L. Basor\thanks{ebasor@aimath.org}\\
               American Institute of Mathematics\\
              Palo Alto, CA  94306, USA
        \and
        Torsten Ehrhardt\thanks{tehrhard@ucsc.edu}\\
        Department of Mathematics\\
        University of California\\
       Santa Cruz, CA 95064, USA}     
\maketitle

\begin{center}\em 
Dedicated to Albrecht B\"ottcher on the occasion of his sixteith birthday
\end{center}

\begin{abstract}
We compute the asymptotics of the determinants of certain $n\times n$ Toeplitz + Hankel matrices
$T_n(a)+H_n(b)$ as $n\to\infty$ with symbols of Fisher-Hartwig type. More specifically
we consider the case where $a$ has zeros and poles and where $b$ is related to $a$ in  specific ways.
Previous results of Deift, Its and Krasovsky dealt with the case where $a$ is even.
We are generalizing this in a mild way to certain non-even symbols.

\end{abstract}

\section{Introduction}

For many recent applications, an asymptotic formula for determinants of the sum of finite Toeplitz and Hankel matrices has been of interest.  For example,  if we let  $a$ be in $L^{1}(\T)$ and denote the $k$th Fourier coefficients of $a$ by $a_{k}$ then understanding the behavior of 
\[ \det \left(a_{j-k} + a_{j+k+1} \right) _{j,k  = 0, \dots ,n-1}\]
as $n\to\iy$ is important in random matrix theory. It has been shown in \cite{BE1}
that the above determinant behaves asymptotically like $G^n E$ with certain explicitly given constants $G$ and $E$ if $a$ is a sufficiently well-behaved function.
Such a result is an analogue of the classical Szeg\"o-Widom limit theorem \cite{W2} for Toeplitz determinants.

The above determinant is a special case of more general determinants, 
\[ \det \left(a_{j-k} + b_{j+k+1} \right) _{j,k  = 0, \dots ,n-1,}\]
where both $a$ and $b$ are in $L^{1}(\T)$. We refer to the functions $a$ and $b$ as symbols. 
The goal is to find the asymptotics in the case of well-behaved $a$ and $b$ and also for the singular Fisher-Hartwig type symbols (symbols, with say, jump discontinuities or zeros).
While an asymptotic formula in such a general case (with explicit description of the constants) is probably not doable, much recent progress has been made in some special cases. 

To be more precise a Fisher-Hartwig symbol is one of the form
\bq \label{eqn1}
a(e^{i \theta}) = c(e^{i\theta}) \prod_{r = 1}^{R}  v_{\tau_{r}, \al_{r}} (e^{i\theta}) u_{\tau_{r}, \beta_{r}}(e^{i\theta}) 
 \eq
where $c$ is a sufficiently well-behaved function (i.e., sufficiently smooth, nonvanishing, and with winding number zero) and
for $\tau = e^{i \phi}$,
 \begin{align*}
u_{\tau,\beta}(e^{i\theta}) &= \exp(i\beta(\theta-\phi-\pi)), \quad
0< \theta-\phi <2\pi,
\\[.5ex]
 v_{\tau, \al}(e^{i\theta}) & =  ( 2 - 2 \cos (\theta-\phi))^{\al}.
 \end{align*}
The symbol $u_{\tau, \beta}$ has a jump at the point $\tau$ on the unit circle and the function $v_{\tau. \al}$ can be singular (say if $\al$ has negative real part,) or be zero at $\tau$. We will generally refer to this last factor as having a singularity of ``zero'' type.
Furthermore, $\alpha_r$ and $\beta_r$ are complex parameters (where we assume $\Re \alpha_r>-1/2$) and $\tau_1,\dots,\tau_R$ are distinct points on the
 unit circle $\T$.

In the case of smooth symbols, we cite the results in \cite{BE4} where the case of 
\[ 
\det \left(a_{j-k} + b_{j+k+1} \right) _{j,k  = 0, \dots ,n-1}
\]
with $b(e^{i\theta})=\pm e^{i\ell \theta} a(e^{i\theta})$ and $\ell$ fixed is considered.  
It is worth mentioning that among those cases, there are four special cases of particular interest,
\begin{itemize}
\item[(i)]
$b(e^{i\theta})=a(e^{i\theta})$,
\item[(ii)]
$b(e^{i\theta})=-a(e^{i\theta})$,
\item[(iii)]
$b(e^{i\theta})=e^{i\theta}a(e^{i\theta})$,
\item[(iv)]
$b(e^{i\theta})=-e^{-i\theta} a(e^{i\theta})$,
\end{itemize} 
in which the asymptotics have the form $ G^n E$ with non-zero $E$. In the case of even symbols, i.e., $a(e^{i\theta})=a(e^{-i\theta})$, these four cases are also related to the random matrices 
taken from the classical groups \cite{BR,FF}. Furthermore, these Toeplitz+Hankel determinants are expressable as Hankel determinants as well.

In the case of $b=a$, two earlier papers of the authors consider the case of jump discontinuities \cite{BE1, BE2}. 
Furthermore, in the above four cases where in addition $a$ is even, 
the results of Deift, Its, and Krasovsky \cite{DIK} are quite complete and impressive. They allow quite general Fisher-Hartwig symbols with both zeros and jumps.
In \cite{DIK} 
the asymptotics are of the form
\[ G^{n}\,n^{p}\,E\]
where $G, p,$ and $E$ are explicitly given constants.
However, none of the earlier mentioned papers cover the case where the symbol is allowed to be non-even and with singularities of the zero type. So this is the focus of this paper, a non-even symbol with certain specified types of Fisher-Hartwig symbols.  We prove an asymptotic formula of the same form as above. This is a step in the ultimate goal of asymptotics for non-even symbols with general Fisher-Hartwig singularitites.

In order to briefly sketch the main ideas of the paper, let $T(a)$ and $H(b)$ stand for the Toeplitz and Hankel operators with symbols $a$ and $b$ 
acting on $\ell^2$, and let $P_n$ stand for the finite section projection on $\ell^2$. The precise definition of these operators will be given
in the next section. The above determinants can be understood as the determinant of 
$P_n (T(a)+H(b)) P_n$ for certain symbols $a,b$ of Fisher-Hartwig type. Since we will allow not only for jumps, but also for zeros and poles the underlying operator (or its inverse) is generally not bounded. Hence the first step is to reformulate the problem as one which involves only bounded operators.
This will be done by establishing an identity of the kind
\begin{equation}\label{f.id1}
\det P_n(T(a) +H(b))P_n = \det \Big( P_n  T^{-1}(\psi)(T(c)+ H(cd\phi)) T\iv(\psi\iv) P_n\Big) 
\end{equation}
where the functions $c$ and $d$ are smooth and $\phi$ and $\psi$ have only jump discontinuities. The next major step is a separation theorem, 
which allows to ``remove'' the smooth functions $c$ and $d$, i.e., 
$$
\frac{\det  \Big( P_n T^{-1}(\psi)(T(c)+ H(cd\phi)) T\iv(\psi\iv) P_n\Big)}%
{\det \Big( P_n T^{-1}(\psi)(I+ H(\phi)) T\iv(\psi\iv) P_n\Big)  }
\sim E\cdot G^n,\qquad n\to\iy,
$$
with explicit constant $E$ and $G$. Then by using the first identity again we relate the last determinant back to a Toeplitz+Hankel determinant,
$$
\det P_n(T(a_0) +H(b_0))P_n = \det  \Big( P_n T^{-1}(\psi)(I+ H(\phi)) T\iv(\psi\iv) P_n\Big)\,,
$$
where, as it turns out, $a_0$ and $b_0$ are Fisher-Hartwig symbols which in their product do not have the smooth part.
Of course, the whole procedure is only as useful as far as we are able to obtain the asymptotics of $\det P_n(T(a_0) +H(b_0))P_n$.
Here we apply the results of Deift, Its, and Krasovsky mentioned above \cite{DIK} to identify this asymptotics in four special cases. 
The relation between our symbols $a$ and $b$, and the symbols $a_0$ and $b_0$ to which we apply \cite{DIK}, is in fact given by
$$
a= c\, a_0,\qquad b= c\, d\,  b_0,
$$
where $a_0$ is even and of Fisher-Hartwig type and $b_0$ relates to $a_0$ as in (i)--(iv). The functions $c$ and $d$, while required to be sufficiently well-behaved, do not have to be even. The function $d$ is required to satisfy $d(e^{i\theta})d(e^{-i\theta})=1$, $d(\pm1)=1$. In this sense, our results generalize some of the results of \cite{DIK}.

Note that in the most general case (for which we are able to do the separation theorem), the asymptotics of the corresponding 
$\det P_n(T(a_0) +H(b_0))P_n $ is not known.

The idea for establishing an identity of the kind \eqref{f.id1} and proving a separation theorem is due to B\"ottcher and Silbermann. In 1985 they proved the 
 Fisher-Hartwig conjecture for symbols with small parameters (i.e, symbols \eqref{eqn1} with  $|\Re \alpha_r|<1/2$ and $|\Re \beta_r|<1/2$), which was considered a major breakthrough at the time (see \cite{BS2} and \cite[Sect.~10.10]{BS}).
Although they  considered bounded invertible  operators acting between different weighted $L^2$-spaces, the essential point of their analysis can be expressed as identity 
of the kind
$$
\det T_n(a) = \det  P_n T\iv (\psi) T(\phi) T\iv(\psi\iv) P_n \,,
$$
where $a$ is a Fisher-Hartwig symbol with jumps and zeros/poles, while $\phi$ and $\psi$ are Fisher-Hartwig symbols with jumps only
(the corresponding  Toeplitz operators being bounded and invertible under certain conditions).

Here is an outline of the paper. We begin with the operator theoretic preliminaries. This is done is section 2. In section 3, we reformulate the problem so that we can consider determinants of bounded operators only.  In section 4, some additional operator theoretic results are given that are particularly useful for our situation. We then prove, in section 5,  a ``separation'' theorem, that is, a theorem that allows us to compute the asymptotics from a combination of the smooth symbols and some specific cases of singular symbols where the results can be computed by other means. This is done in more generality than is needed for our final results, but it may prove to be useful in the future if other specific cases of singular symbols are obtained. 

Section 6 is devoted to infinite determinant computations that are required to describe constants explicitly and the next section contains the known results for the specific known singular symbols. Everything is collected in section 8 where the final asymptotics are computed.

Finally, the last section contains some additional results. In the course of the computations for the main results of this paper, we discovered that the inverse of certain Identity plus Hankel operators had inverses that could be described using Toeplitz operators, their inverses and Hankel operators. So the inverse expressions may be of independent interest and are also included.


\section{Preliminaries}

We denote by $\ell^{2}$ the space of all complex-valued sequences $\{x_{n}\}_{n=0}^{\infty}$ quipped with usual  $2$-norm.  The set $\cL(\ell^{2})$ is the set of bounded operators on  $\ell^{2}$ and $ \cC_1(\ell^2)$ is the set of trace class operators on $\ell^{2}$.

The Toeplitz operator $T(a)$ and Hankel operator $H(a)$ with symbol $a\in L^{\infty}(\T)$ are the bounded linear operators defined on $\ell^{2}$ with matrix representations
$$
T(a) = (a_{j-k}),\,\,\,\,\, 0\leq j,k < \infty, 
$$
and
$$
H(a) = (a_{j+k+1}),\,\,\,\,\, 0\leq j,k < \infty.
$$
%
It is well-known and not difficult to prove that Toeplitz and Hankel operators satisfy the fundamental identities
\be\label{T1} T(ab) = T(a)T(b) + H(a)H(\tilde b)
\ee
and
\be\label{H1}
H(ab) = T(a)H(b) + H(a)T(\tilde b) .
\ee
In the last two identities and throughout the paper we are using the notation
$$
\tilde b (e^{i\theta}) := b(e^{-i\theta}) .
$$
It is worthwhile to point out that these identities imply that 
\be\label{T2}
T(abc)=T(a)T(b)T(c),\qquad H(ab\tilde{c})=T(a)H(b)T(c)
\ee
for $a,b,c,\in L^\iy(\T)$ if $a_n=c_{-n}=0$ for all $n>0$.

We define the (finite section) projection $P_{n}$  by
\[ 
P_{n} : \{x_k\}_{k=0}^\iy\in \ell^2 \mapsto \{y_k\}_{k=0}^\iy\in \ell^2,\qquad
y_k=\left\{\ba{cl} x_k & \mbox{if } k<n\\ 0 & \mbox{if } k\ge n  \,. \ea\right.
\]
Using $P_n$  we can view our determinants of interest as determinants of truncations of infinite matrices, 
\[ 
\det ( T_n(a) + H_n(b))=
P_{n} (T(a) + H(b) ) P_{n}. \]
For bounded $a$ and $b$ this is the truncation of a sum of bounded operators, but  even more generally for $a,b\in L^1(\T)$ providing we view the operators as being defined on the space of sequences with only a finite number of non-zero terms.

In the next sections we will be mostly concerned with functions $a$ that are products of continuous functions times those with certain specific types of singularities. It will be convenient for the continuous function factors to satisfy certain properties. To describe this,
we consider the Banach algebra called the  Besov class $B_{1}^{1}.$ This is the algebra of all functions $a$ defined on the unit circle for which
\[
\|a\|_{B_{1}^{1}} := \int_{-\pi}^{\pi}\frac{1}{y^{2}}\int _{-\pi}^{\pi}|a(e^{ix+iy}) + a(e^{ix-iy}) -2a(e^{ix})|\,dx\, dy < \iy .
\]
A function $a$ is in $B_1^1$ if and only if the Hankel operators $H(a)$ and $H(\tilde{a})$ are both trace class. Moreover, the Riesz projection is bounded on $B_1^1$, and an equivalent norm is given by
\[ |a_{0}| + \|H(a)\|_{\cC_1} + \|H(\tilde{a})\|_{\cC_1},\]
where  $\|A\|_{\cC_1}$ is the trace norm of the operator $A$.

Let us also recall the notion of Wiener-Hopf factorization. There are several versions of it. We say that 
$c\in L^\iy(\T)$ has a bounded (canonical) factorization if we can write $c=c_-c_+$
with $c_+,c_+\iv \in H^\iy_+(\T)$ and $c_-,c_-\iv \in H^\iy_-(\T)$, where 
$$
H^\iy_\pm (\T)=\{f\in L^\iy(\T) \,:\, f_n=0 \mbox{ for all } \pm n<0\,\}.
$$
We say that $c\in B^1_1$ has a canonical factorization in $B_1^1$ if 
we can write $c=c_-c_+$ with $c_+,c_+\iv \in H^\iy_+(\T)\cap B_1^1$ and $c_-,c_-\iv \in H^\iy_-(\T)\cap B_1^1$.
It is well known (see, e.g., \cite[Sect. 10.24]{BS}) that $c$ admits a canonical factorization in $B_1^1$ if and only if
the function $c$ does not vanish on $\T$  and has winding number zero. In this case, the logarithm exists, $\log c\in B^1_1$, 
and one can define normalized factors,
\bq\label{f.c+-}
c_\pm(t) = \exp\left( \sum_{k=1}^\iy t^{\pm k} [\log c]_{\pm k}\right),
\eq
which yield a factorization $c=c_- G[c] c_+$ with the constant 
\bq\label{f.G[c]}
G[c]:= \exp( [\log c]_0)
\eq
representing the geometric mean.

For our purposes it is also important to consider a factorization of the kind
\bq\label{f.dd+}
d= \td_+\iv d_+ \quad \mbox{ with } d_+,d_+\iv \in B_1^1\cap H^\iy_+(\T),
\eq
in which the ``minus'' factor $\td_+\iv $ is given by the ``plus'' factor $d_+$. It is not too difficult to show (using the above result and the uniqueness of factorization up to multiplicative constants) that $d\in B_1^1$ possesses a factorization of the above kind if and only if $d$ does not vanish on $\T$, has winding number zero and satisfies the conditions
$d\td=1$ and $d(\pm 1)=1$. Notice that in this case $\log d\in B_1^1$ is an odd function and thus $G[d]=1$.


\section{Reformulating the problem}
\label{s3}

As described in the introduction, we are interested in determinants of Toeplitz plus Hankel matrices with singular symbols. 
Let us denote the corresponding (infinite) operator by 
$$
M(a,b) := T(a)+H(b).
$$
Notice that when the symbols involve zeros or poles, then either $M(a,b)$ or its inverse are in general not bounded operators anymore.
The purpose of this section is to reformulate the problem about the asymptotics of $\det (P_n M(a,b) P_n)$
as one for $\det (P_n A P_n)$ where $A$ is a bounded (and invertible) operator on $\ell^2$. 
%
%
%
%
More precisely, we are going to prove a formula
\begin{align}\label{f.M-A}
\det P_n M(a,b)P_n = \det P_n T\iv(\psi) M(c,c \,d\,\phi ) T\iv(\psi\iv) P_n ,
\end{align}
where $a$ and $b$ are certain functions of Fisher-Hartwig type (allowing in particular for zeros and jumps)
while on the right hand side $\psi$ and $\phi$ are functions with jump discontiniuties only and with ranges of parameters such that 
$T(\psi)$ and $T(\psi\iv)$ are invertible Toeplitz operators. The functions $c$ and $d$ are smooth and nonvanishing functions with winding number zero.

Since the above formula involves inverses of Toeplitz operators, let us first recall a well-known sufficient invertibility criterion
(see, e.g., \cite{W} or \cite{BS}).

\begin{theorem}\label{t.T.inv}
Let $c$ be a continuous and nonvanishing function on $\T$ with winding number zero, let 
$\tau_1,\dots,\tau_R\in\T$ be distinct, and  
$$
\psi (e^{i\theta})  = c (e^{i\theta})   \prod_{r=1}^R u_{\tau_r,\beta_r}(e^{i\theta}) \,.
$$
If  $|\Re\beta_r|<1/2$ for all $1\le r\le R$, then $T(\psi)$ is invertible on $\ell^2$.
\end{theorem}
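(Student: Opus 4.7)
The plan is to apply the classical Fredholm theory for Toeplitz operators with piecewise continuous symbols to establish that $T(\psi)$ is Fredholm of index zero, and then invoke Coburn's lemma to upgrade this to invertibility.

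First I would observe that $\psi$ is bounded and piecewise continuous on $\T$, with jump discontinuities only at $\tau_1,\dots,\tau_R$. Directly from the defining formula, $u_{\tau_r,\beta_r}(\tau_r^\pm) = e^{\mp i\pi\beta_r}$, so the one-sided limits of $\psi$ at $\tau_r$ are both nonzero and their ratio $\psi(\tau_r^+)/\psi(\tau_r^-)$ equals $e^{-2\pi i\beta_r}$; away from the $\tau_r$, the product $\psi$ is continuous and nonvanishing on $\T$.

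Next, I would invoke the Gohberg--Krupnik Fredholm criterion for Toeplitz operators with piecewise continuous symbols on $\ell^2$ (see \cite{W} or \cite{BS}): $T(\psi)$ is Fredholm iff the closed curve $\gamma$ formed from $\psi(\T)$ by adjoining, at each jump $\tau_r$, the straight line segment from $\psi(\tau_r^-)$ to $\psi(\tau_r^+)$, avoids the origin, and in that case $\ind T(\psi) = -\wind(\gamma,0)$. The hypothesis $|\Re\beta_r|<1/2$ ensures that $e^{-2\pi i\beta_r}$ is not a negative real number, so the two endpoints $\psi(\tau_r^-)$ and $\psi(\tau_r^+)$ lie in a common open half-plane, whence the adjoined segment avoids $0$. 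A direct calculation then shows that the continuous argument increment $2\pi\Re\beta_r$ contributed by $u_{\tau_r,\beta_r}$ as we traverse $\T$ is exactly cancelled by the argument decrement along the adjoined segment at $\tau_r$; combined with $\wind c = 0$, this gives $\wind(\gamma,0)=0$, so $T(\psi)$ is Fredholm of index zero.

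The remaining step, which would otherwise be the main obstacle, is dispatched cleanly by Coburn's lemma: for $\psi \in L^\iy(\T)$ not almost-everywhere zero, at least one of $\ker T(\psi)$ and $\ker T(\psi)^*$ is trivial. Our $\psi$ certainly satisfies this (it vanishes on at most the finite set $\{\tau_1,\dots,\tau_R\}$), so combined with $\ind T(\psi)=0$ both the kernel and cokernel must be trivial, and $T(\psi)$ is invertible on $\ell^2$.
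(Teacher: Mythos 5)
Your proof is correct. Note that the paper does not actually prove this statement: it is recalled as a well-known sufficient invertibility criterion with a pointer to Widom \cite{W} and B\"ottcher--Silbermann \cite{BS}, and your argument is essentially the standard one found there. The two ingredients are exactly right: the Gohberg--Krupnik/Widom symbol calculus for piecewise continuous symbols shows $T(\psi)$ is Fredholm (the condition $|\Re\beta_r|<1/2$ guarantees the jump ratio $e^{-2\pi i\beta_r}$ is not a nonpositive real, so the adjoined segments miss the origin) with $\ind T(\psi)=0$ (the continuous argument increment $2\pi\Re\beta_r$ of each factor $u_{\tau_r,\beta_r}$ is cancelled by the segment at $\tau_r$, since both endpoints have principal argument of modulus less than $\pi/2$ after normalizing by $\psi(\tau_r^-)$, and $c$ contributes winding number zero); Coburn's lemma then upgrades index zero to invertibility. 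An alternative route, which is the one implicitly used elsewhere in the paper (proof of Theorem \ref{t4.1}), is the explicit bounded Wiener--Hopf factorization $u_{\tau,\beta}=\xi_{\tau,-\beta}\,\eta_{\tau,\beta}$, which yields invertibility directly with an explicit inverse $T(\psi_+\iv)T(\psi_-\iv)$; your argument is less constructive but requires no factorization formulas.
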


Let us now introduce the functions for which identity \eqref{f.M-A} will be proved.
For these functions the separation theorem will be proved later on as indicated in the introduction.
We consider
\begin{align}
a &= c\, v_{1,\alpha^+}\;   v_{-1,\alpha^-}\,
\prod_{r=1}^R v_{\tau_r,\alpha^+_r}\;  v_{\bar{\tau}_r,\alpha^-_r}\;,
\\
b &= c\,d\, v_{1,\alpha^+}\;  u_{1,\beta^+} \; v_{-1,\alpha^-}\; u_{-1,\beta^-}\;
\prod_{r=1}^R v_{\tau_r,\alpha_r}u_{\tau_r,\beta_r}\;  v_{\bar{\tau}_r,\alpha_r} u_{\bar{\tau}_r,\beta_r}\,.
\end{align}
The functions $c$ and $d$ are smooth nonvanishing functions with winding number zero.
In addition, we will require that  $d\tilde{d}=1$ and $d(\pm1)=1$. 
We also assume that $\tau_1,\dots,\tau_R\in\T_+$ are distinct,
where
$$
\T_+:=\left\{\, t\in \T\,:\, \Im(t) >0\,\right\}\,,
$$
and that 
$$
\alpha^\pm,\;\;
\beta^\pm,\;\;
\alpha_r^\pm,\;\;
\beta_r
$$
are complex parameters satisfying the conditions \eqref{f.16} and \eqref{f.17} stated below, whereas
\begin{equation}
\alpha_r := \frac{\alpha_r^++\alpha_r^-}{2}\qquad \mbox{ for } 1\le r \le R.
\end{equation}

The functions $\psi$ and $\phi$ that will appear in the identity are 
\begin{align}
\label{f.psi}
\psi &= u_{1,\alpha^+}\;   u_{-1,\alpha^-}
\prod_{r=1}^R u_{\tau_r,\alpha^+_r}\;  u_{\bar{\tau}_r,\alpha^-_r}\;,
\\
\label{f.phi}
\phi  &=   u_{1,\gamma^+} \; u_{-1,\gamma^-}\;
\prod_{r=1}^R u_{\tau_r,\gamma_r}\;   u_{\bar{\tau}_r,\gamma_r}\;,
\end{align}
where
\begin{equation}\label{f.gam}
\gamma^\pm:=\alpha^\pm +\beta^\pm\,,\qquad
\gamma_r:=\alpha_r +\beta_r\,.
\end{equation}
The restrictions which we are going to impose on the parameters are the following:
\begin{align}\label{f.16}
|\Re\alpha^\pm|<1/2,\qquad
|\Re\alpha^\pm_r|<1/2,
\end{align}
which guarantee the invertibility of $T(\psi)$ and $T(\psi\iv)$, and
\begin{align}\label{f.17}
-3/2<\Re \gamma^+<1/2,\qquad
-1/2<\Re \gamma^-<3/2,\qquad
|\Re\gamma_r|<1/2.
\end{align}
The last conditions are needed later on.

\begin{theorem}\label{t4.1}
Let $a,b,c,d,\phi,\psi$ be as above with \eqref{f.16} being assumed. Then
$$
\det P_n M(a,b) P_n =\det \Big( P_n T\iv(\psi)M( c, c\,d\, \phi)T\iv(\psi\iv) P_n\Big).
$$
\end{theorem}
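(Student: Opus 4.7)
\emph{Plan and setup.} I aim to prove the operator identity
\begin{equation*}
P_n M(a,b) P_n \;=\; P_n T\iv(\psi)\, M(c,\, c\,d\,\phi)\, T\iv(\psi\iv)\, P_n
\end{equation*}
as an equality of finite matrices on $P_n\ell^2$, from which the claimed equality of determinants follows at once. By condition \eqref{f.16} and Theorem~\ref{t.T.inv}, both $T(\psi)$ and $T(\psi\iv)$ are invertible, so $\psi$ and $\psi\iv$ admit canonical Wiener--Hopf factorizations $\psi=\psi_-\psi_+$ and $\psi\iv=\xi_-\xi_+$, giving $T\iv(\psi)=T(\psi_+\iv)T(\psi_-\iv)$ and $T\iv(\psi\iv)=T(\xi_+\iv)T(\xi_-\iv)$ by \eqref{T2}. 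In parallel, the requirements $d\td=1$ and $d(\pm1)=1$ yield the factorization $d=\td_+\iv d_+$ with $d_+,d_+\iv\in B_1^1\cap H^\iy_+(\T)$ per \eqref{f.dd+}.

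\emph{Algebraic reorganization.} I would next expand
\begin{equation*}
T\iv(\psi)\,M(c,c\,d\,\phi)\,T\iv(\psi\iv)= T\iv(\psi)T(c)T\iv(\psi\iv)+T\iv(\psi)H(c\,d\,\phi)T\iv(\psi\iv)
\end{equation*}
and manipulate each piece using \eqref{T1}--\eqref{T2}, together with the factorization $d=\td_+\iv d_+$ in the Hankel part. The ``inner'' sandwich simplifies cleanly via \eqref{T2}: since $\psi_-\iv\in H^\iy_-(\T)$ and $\xi_+\iv\in H^\iy_+(\T)$, one gets $T(\psi_-\iv)T(c)T(\xi_+\iv)=T(\psi_-\iv c\xi_+\iv)$ and $T(\psi_-\iv)H(cd\phi)T(\xi_+\iv)=H(\psi_-\iv\widetilde{\xi_+\iv}cd\phi)$. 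The remaining ``outer'' factors $T(\psi_+\iv)$ and $T(\xi_-\iv)$ must then be worked through using \eqref{T1} and \eqref{H1}, each application of which produces a Hankel correction. The critical point is that, at each Fisher--Hartwig singularity $\tau\in\{1,-1,\tau_r,\bt_r\}$, the formal identity $v_{\tau,\al}u_{\tau,\beta}=(1-t/\tau)^{\al+\beta}(1-\tau/t)^{\al-\beta}$ (up to constants) makes the choice $\gamma^\pm=\al^\pm+\beta^\pm$, $\gamma_r=\al_r+\beta_r$ from \eqref{f.gam} force a cancellation of these corrections against the $v$-factors in $a$ and $b$. The formula $\al_r=(\al_r^++\al_r^-)/2$ is likewise the ``symmetric average'' that correctly pairs the exponents at the two points $\tau_r$ and $\bt_r$.

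\emph{Projection and main obstacle.} After the algebraic step the right-hand side should reduce to the schematic form $T(f_+)[\,T(a)+H(b)\,]T(g_-)+K$, where $f_+\in H^\iy_+(\T)$, $g_-\in H^\iy_-(\T)$, and $K$ is a residual built from Hankel operators whose symbols vanish in the $n\times n$ principal block. Using the projection identities $P_nT(f)=P_nT(f)P_n$ for $f\in H^\iy_+(\T)$ and $T(g)P_n=P_nT(g)P_n$ for $g\in H^\iy_-(\T)$, I can then push $P_n$ through the outer factors and obtain $P_nM(a,b)P_n$. The hardest part will be Step~2: tracking every Hankel correction produced by repeated use of \eqref{T1} and \eqref{H1} and checking that the particular parameter choices in \eqref{f.gam} make them telescope cleanly across all four types of singular points. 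A secondary subtlety is that, since $a$ and $b$ may have poles (when some $\al$-parameter has negative real part), $M(a,b)$ is in general an unbounded operator on $\ell^2$; hence all intermediate identities must be interpreted at the level of matrix entries within the finite-dimensional $P_n\ell^2$, where unboundedness causes no difficulty.
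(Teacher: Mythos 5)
Your overall toolkit (Wiener--Hopf factorization of $\psi$, the exact multiplicativity \eqref{T2}, and the triangularity of $T(\psi_\pm^{\pm1})$ relative to $P_n$) is the right one, but there are two genuine gaps. First, your stated goal --- the \emph{matrix} identity $P_nM(a,b)P_n=P_nT\iv(\psi)M(c,cd\phi)T\iv(\psi\iv)P_n$ --- is false in general; only the determinants agree. What is true as a matrix identity is $P_nM(a,b)P_n=P_nT(\psi_-\iv)M(c,cd\phi)T(\psi_+)P_n$, and to pass from $T(\psi_-\iv)$ and $T(\psi_+)$ to the full inverses $T\iv(\psi)=T(\psi_+\iv)T(\psi_-\iv)$ and $T\iv(\psi\iv)=T(\psi_+)T(\psi_-)$ one must \emph{insert} the extra triangular factors $T(\psi_+\iv)$ on the left and $T(\psi_-)$ on the right; these commute appropriately with $P_n$ and have unit determinant after truncation, so the determinant is unchanged even though the matrix is. Your plan never performs this insertion, and without it you cannot reach the right-hand side at all.

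Second, the heart of the argument is not a ``telescoping of Hankel corrections'' under repeated use of \eqref{T1}--\eqref{H1}; that route would generate correction terms you have no mechanism to kill, and you explicitly leave it unverified. The actual mechanism is an \emph{exact} factorization of the symbols: with $\psi_\pm$ built from $\eta_{\tau,\gamma}(t)=(1-t/\tau)^\gamma$ and $\xi_{\tau,\delta}(t)=(1-\tau/t)^\delta$, one checks directly that $\psi_-\iv\psi_+$ reproduces the $v$-part of $a$ and that $\psi_-\iv\tps_+$ is a product of $\xi$'s with doubled exponents, which combined with $\phi$ (this is exactly where $\gamma=\alpha+\beta$ and $\alpha_r=(\alpha_r^++\alpha_r^-)/2$ enter) reproduces the $v\cdot u$ part of $b$. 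Hence $a=\psi_-\iv\, c\,\psi_+$ and $b=\psi_-\iv\, (cd\phi)\,\tps_+$ \emph{exactly}, and a single application of \eqref{T2} in the form $M(\rho a_1\sigma,\rho b_1\tilde\sigma)=T(\rho)M(a_1,b_1)T(\sigma)$ produces no correction terms whatsoever. Finally, you do not address that the factors $\psi_\pm^{\pm1}$ are unbounded once the parameters have nonzero real part: the identity must first be proved for purely imaginary parameters and then extended by analyticity of both sides in the parameters, a step absent from your outline. Your remark that everything can be read off at the level of finite matrices covers the unboundedness of $M(a,b)$ but not the legitimacy of writing $T\iv(\psi)$ as a product of Toeplitz operators with possibly unbounded symbols.
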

\begin{proof}
We first notice that $a,b\in L^1(\T)$. Hence the $P_n M(a,b) P_n$ is a well-defined matrix, although $M(a,b)$ may be an unbounded operator.
The proof of the identity is based on the Wiener-Hopf factorization of the underlying generating functions.
In order to avoid unbounded factors, let us assume for the time being that all the parameters $\alpha^\pm,\beta^\pm,\alpha_r^\pm,\beta_r$ are purely imaginary.
The general case follows by observing that both sides of the identity are analytic in each of these parameters.

In order to obtain the factorization introduce the functions 
$$
\eta_{\tau,\gamma}(t)=(1-t/\tau)^{\gamma},\qquad
\xi_{\tau,\delta}(t)=(1-\tau/t)^{\delta},
$$
where the branches of $\eta$ (analytic inside the unit circle) and $\xi$ (analytic outside the unit circle)  are chosen so that 
$\eta_{ \tau, \gamma}(0) = \xi_{\tau, \delta}(\iy) =1$. Using the above definitions we can produce the well-known Wiener-Hopf factorizations for 
$$
u_{\tau, \beta} = \xi_{\tau, -\beta}\,\eta_{\tau, \beta}\,,\qquad\quad
v_{\tau,\alpha}=\xi_{\tau,\alpha}\, \eta_{\tau,\alpha}\,.
$$

Now put
\begin{align*}
\psi_-  &= \xi_{1,-\alpha^+}\,\xi_{-1,-\alpha^-} 
\prod_{r=1}^R \xi_{\tau_r,-\alpha^+_r}\;  \xi_{\bar{\tau}_r,-\alpha^-_r}\;,
\\
\psi_+  &= \eta_{1,\alpha^+}\;\;\,   \eta_{-1,\alpha^-}\;\,
\prod_{r=1}^R \eta_{\tau_r,\alpha^+_r}\;\,  \eta_{\bar{\tau}_r,\alpha^-_r}\;.
\end{align*}
Then, indeed, $\psi=\psi_-\psi_+$. Furthermore,
\begin{align}
\psi_-\iv \psi_+ &= v_{1,\alpha^+}\;   v_{-1,\alpha^-}\,
\prod_{r=1}^R  v_{\tau_r,\alpha^+_r}\;  v_{\bar{\tau}_r,\alpha^-_r},
\nonumber
\\
\psi_-\iv \tps_+ &= \xi_{1,2\alpha^+}\;   \xi_{-1,2\alpha^-}\,
\prod_{r=1}^R  \xi_{\tau_r,\alpha^+_r+\alpha_r^-}\;  \xi_{\bar{\tau}_r,\alpha^+_r+\alpha^-_r}.
\nonumber
\end{align}
Here notice that $\tilde{\eta}_{\tau,\alpha}=\xi_{\bt,\alpha}$.
The latter can be written as the product of 
$$
v_{1,\alpha^+}\;   v_{-1,\alpha^-}\,
\prod v_{\tau_r,\alpha_r}\;  v_{\bar{\tau}_r,\alpha_r}
\quad\mbox{ and }\quad
u_{1,-\alpha^+}\; u_{-1,-\alpha^-}\,
\prod u_{\tau_r,-\alpha_r}\;  u_{\bar{\tau}_r,-\alpha_r}
$$
as $\alpha_r=(\alpha_r^++\alpha_r^-)/2$.
Thus we see that 
$$
\psi=\psi_-\psi_+\;, \quad 
a= c\,\psi\iv_- \,\psi_+\;,\quad
b=c\, d\, \phi\, \psi_-\iv\, \tps_+\;.
$$

It follows that 
$$
\det P_{n} M(a, b) P_{n} =   \det P_{n} M( \psi_{-}^{-1}\,c\,\psi_{+}, \psi_{-}^{-1} c\, d\, \phi\, \tps_{+}) P_{n}$$ 
which equals 
$$\det P_{n} T(\psi_{-}^{-1})
M( c, c \, d\, \phi ) T(\psi_{+}) P_{n}
$$
by using \eqref{T2}.
Also, notice that the determinants of $P_{n}T(\psi_{\pm})P_{n}$ and $P_{n}T(\psi_{\pm}^{-1})P_{n}$ are one since they are either upper or lower triangular matrices with ones on the diagonal. Using this and the observation that  
$$
P_n T(\psi_+\iv)P_n = P_n T(\psi_+\iv),\qquad
P_n T(\psi_-) P_n = T(\psi_-)P_n,
$$
the above equals
$$
\det P_{n} T(\psi_{+}^{-1}) T(\psi_{-}^{-1}) M(c, c\, d  \, \psi) T(\psi_{+}) T(\psi_{-}) P_{n}. 
$$
Applying the following formulas
for the inverses,
$$
T\iv(\psi)=T(\psi_+\iv)T(\psi_-\iv),\qquad
T\iv (\psi\iv)= T(\psi_+)T(\psi_+),
$$
concludes the proof of the identity.
\end{proof}

It is interesting to consider certain special 
 cases. What we have in mind is the case where the Fisher-Hartwig part of $a$ (i.e.,
the product without the function $c$) is even. This happens if
$$
\alpha_r^+=\alpha_r^-=\alpha_r\,.$$
If in addition, we put $\beta_r=0$, then
$$
b=  u_{1,\beta^+}\,u_{-1,\beta^-}\, d \,a  \,.
$$
There are four specific choices of parameters $\beta^\pm$ where the factor $\phi_0:=u_{1,\beta^+}\,u_{\beta^-}$ is actually continuous:
\begin{itemize}
\item[(1)]
$\beta^+=\beta^-=0$, $\phi_0(t)=1$\;;
\item[(2)]
$\beta^+=-1$, $\beta^-=1$, $\phi_0(t)=-1$\;;
\item[(3)]
$\beta^+=0$, $\beta^-=1$, $\phi_0(t)=t$\;;
\item[(4)]
$\beta^+=-1$, $\beta^-=0$, $\phi_0(t)=-1/t$\;.
\end{itemize}
Notice that the conditions \eqref{f.16} and \eqref{f.17} on the parameters $\alpha^\pm$ and $\alpha_r$ amount to the following
\begin{align}\label{f.16x}
|\Re\alpha^\pm|<1/2,\qquad
|\Re\alpha_r|<1/2.
\end{align}
To summarize, in these special cases we have
\begin{align}
a &= c\, v_{1,\alpha^+}\;   v_{-1,\alpha^-}\,
\prod_{r=1}^R v_{\tau_r,\alpha_r}\;  v_{\bar{\tau}_r,\alpha_r}\;,
\qquad
b=  \phi_0\, d\, a.
\end{align}
Notice that the cases (1)-(4) correspond to the cases (i)-(iv) considered in the introduction, but are slightly more general due to the factor $d$.
The reason why we single out these four special cases, is because for the computations that are
made later in this paper, it is in these cases that we can actually determine the asymptotics,
whereas in the more general case we can only reduce the asymptotics to a simplified determinant problem for which an answer is unknown.

\section{Additional operator theoretic results}
We need some results about Toeplitz operators and Hankel
operators (see \cite{BE1} and 
\cite{BS}  for the general theory).
First of all, in addition to the projections $P_n$,
and $Q_n=I-P_n$ we define
\bq
W_n (f_{0},f_{1},\dots)  &=& (f_{n-1},f_{n-2},\dots 
,f_{1},f_{0},0,0,\dots),\nn\\  
V_n (f_{0},f_{1},\dots)  &=& 
(0,0,\dots,0,0,f_{0},f_{1},f_{2},\dots),\nn\\
V_{-n}(f_{0},f_{1},\dots)&=& (f_{n},f_{n+1},f_{n+2},\dots).\nn
\eq
It is easily seen that
$W_n^2=P_n$, $W_n=W_nP_n=P_nW_n$, $V_nV_{-n}=Q_n$ and $V_{-n}V_n=I$.
Note also that
\be\label{f1.THVW}
P_nT(a)V_n \eql W_nH(\tilde{a}), \qquad
V_{-n}T(a)P_n \eql H(a)W_n.
\ee
Moreover, we have
\be\label{f1.VVWW}\\
V_{-n} T(a)V_n = T(a),\quad 
V_{-n}H(a) = H(a)V_n,\quad
W_nT(a)W_n = P_nT(\tilde{a})P_{n}.
\ee

In the proofs that follow we will need the notions of stability and strong convergence and
we describe those now. 

Let $A_n$ be a sequence of operators.  This sequence is said
to be stable if there exists an $n_0$ such that the operators $A_n$
are invertible for each $n\geq n_0$ and ${\sup}_{n\geq n_0}
\|A_{n}^{-1}\|<\infty$.  Moreover, we say that $A_n$ converges
strongly on $\ell^{2}$ to an operator $A$ as $n\to\iy$ if $A_nx\to Ax$ in
the norm of $\ell^{2}$ for each $x\in\ell^{2}$.  When dealing with finite
matrices $A_n$, we identify the matrices and their inverses with
operators acting on the image of $P_{n}$.  It is well known (see \cite[Th.~4.15]{BS} and worthy to note that
stability is related to strong convergence of the inverses
(and their adjoints) in the following sense. 

\begin{lemma}\label{l1.1}
Suppose that $A_n$ is a stable sequence such that $A_n\to A$ and
$A_n^*\to A^*$ strongly.  Then $A$ is invertible, and
$A_n\iv\to A\iv$ and $(A_n\iv)^*\to (A\iv)^*$ strongly.
\end{lemma}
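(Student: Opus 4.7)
The plan is to exploit the uniform bound $M := \sup_{n\ge n_0}\|A_n\iv\|<\iy$ provided by stability in order to transfer lower bounds and resolvent identities through the strong limit. The only bookkeeping to watch is the convention used in the paper, namely that the finite matrices $A_n$ and their inverses are identified with operators on $\ell^2$ (via $A_n P_n + Q_n$ on the complement), so that both the strong convergence $A_n x\to Ax$ and the uniform norm bound on $A_n\iv$ live on the same ambient Hilbert space. Once this identification is in place I do not expect any serious obstacle; the lemma is a routine packaging of three short arguments, and the main point to get right is that each step uses only the strong convergence on a single vector, not uniform convergence.

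First I would prove that $A$ is bounded below. For every $x\in\ell^2$ and every $n\ge n_0$, the trivial identity $x = A_n\iv A_n x$ yields $\|x\| \le M\|A_n x\|$, and letting $n\to\iy$ together with $A_n x \to Ax$ gives $\|x\|\le M\|Ax\|$. Hence $A$ is injective with closed range. Next, the adjoint sequence $\{A_n^*\}$ is also stable, since $(A_n^*)\iv = (A_n\iv)^*$ has the same norm as $A_n\iv$, and it converges strongly to $A^*$ by hypothesis; repeating the previous argument yields $\|x\|\le M\|A^* x\|$, so $A^*$ is injective, which is equivalent to $\im A$ being dense in $\ell^2$. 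Combined with the closed range, this proves that $A$ is invertible with $\|A\iv\|\le M$.

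For the strong convergence $A_n\iv \to A\iv$, fix $y\in\ell^2$ and set $x:=A\iv y$. Then the standard resolvent-type manipulation gives
\[
A_n\iv y - A\iv y \;=\; A_n\iv (y - A_n x) \;=\; A_n\iv (A-A_n)x,
\]
so $\|A_n\iv y - A\iv y\| \le M\,\|(A-A_n)x\| \to 0$ by applying the strong convergence $A_n\to A$ to the single vector $x$. Finally, $(A_n\iv)^* \to (A\iv)^*$ strongly is obtained by repeating the same computation with the sequence $\{A_n^*\}$ in place of $\{A_n\}$: it satisfies all three hypotheses (stability with the same constant $M$, strong convergence to $A^*$, and strong convergence of its own adjoint $A_n$ to $A$), and its inverses are exactly $(A_n\iv)^*$ with limit $(A^*)\iv = (A\iv)^*$.
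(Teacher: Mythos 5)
Your proof is correct, and it is precisely the standard argument behind this well-known fact; the paper itself gives no proof, simply citing B\"ottcher--Silbermann (Theorem 4.15 of \cite{BS}), where essentially the same three steps appear (lower bound from stability, density of the range via the adjoint sequence, and the resolvent-type identity $A_n\iv y - A\iv y = A_n\iv(A-A_n)A\iv y$). Your remark about identifying the finite matrices with operators on all of $\ell^2$ correctly addresses the only bookkeeping subtlety in applying the lemma to finite sections.
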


Recall that for trace class operators, the trace ``$\tr \, A$'' and the
operator determinant ``$\det(I+A)$'' are well defined and continuous
with respect to $A$ in the trace class norm.  The following well known result
shows the connection with strong convergence.

\begin{lemma}\label{l1.1b}
Let $B$ be a trace class operator and suppose that $A_n$ and $C_n$
are sequences such that $A_n\to A$ and $C_n^*\to C^*$ strongly. Then 
$A_nBC_n\to ABC$ in the trace class norm.
\end{lemma}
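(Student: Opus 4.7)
The plan is to decompose the error telescopically and reduce the problem to a single standard fact about trace class operators under strong convergence. First I would invoke the Banach--Steinhaus theorem applied to the sequences $\{A_n\}$ and $\{C_n^*\}$ (which are pointwise bounded since they converge strongly) to obtain $M:=\sup_n\|A_n\|<\iy$ and $\sup_n\|C_n\|=\sup_n\|C_n^*\|<\iy$. I would then write
\[
A_nBC_n - ABC \;=\; (A_n-A)\,BC \;+\; A_n\,B(C_n-C),
\]
so that it suffices to show each summand tends to $0$ in the trace norm.

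The key auxiliary fact I would establish is the following: if $K\in\cC_1(\ell^2)$ and $X_n\in\cL(\ell^2)$ satisfy $\sup_n\|X_n\|<\iy$ and $X_n\to 0$ strongly, then $\|X_nK\|_{\cC_1}\to0$. Given $\eps>0$, approximate $K$ in trace norm by a finite rank operator $K_\eps=\sum_{j=1}^N\langle\,\cdot\,,g_j\rangle h_j$ with $\|K-K_\eps\|_{\cC_1}<\eps$. Then
\[
\|X_nK\|_{\cC_1}\;\le\;\Big(\sup_m\|X_m\|\Big)\,\|K-K_\eps\|_{\cC_1}\;+\;\sum_{j=1}^N\|g_j\|\cdot\|X_nh_j\|,
\]
and the finite-rank sum vanishes as $n\to\iy$ by strong convergence of $X_n$ on the finite set $\{h_1,\dots,h_N\}$; the first term is bounded by $(\sup_m\|X_m\|)\eps$, and sending $\eps\to 0$ gives the claim.

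Now I apply the auxiliary fact to the two summands in the decomposition. For the first one, $BC\in\cC_1(\ell^2)$ because $B$ is trace class and $C$ is bounded, and $A_n-A\to 0$ strongly with uniformly bounded norms, so $\|(A_n-A)BC\|_{\cC_1}\to0$. For the second, using that the trace norm is preserved under taking adjoints, I would write $\|B(C_n-C)\|_{\cC_1}=\|(C_n-C)^*B^*\|_{\cC_1}$; here $B^*\in\cC_1(\ell^2)$ and $(C_n-C)^*\to 0$ strongly with uniformly bounded norms, so the auxiliary fact gives $\|B(C_n-C)\|_{\cC_1}\to0$. Combined with the uniform bound $\|A_n\|\le M$, this yields $\|A_nB(C_n-C)\|_{\cC_1}\to0$, completing the argument.

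There is no genuine obstacle in this proof; the only mildly subtle point is the asymmetry in the hypotheses ($A_n\to A$ strongly versus $C_n^*\to C^*$ strongly), which is why one cannot treat the two factors identically and must pass to adjoints on the right, exploiting that $\cC_1(\ell^2)$ is a $*$-ideal in $\cL(\ell^2)$ with the trace norm invariant under $*$.
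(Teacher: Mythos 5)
Your proof is correct and complete: the telescoping decomposition, the uniform bounds from Banach--Steinhaus, the finite-rank approximation argument for the auxiliary fact, and the passage to adjoints on the right factor (using that the trace norm is invariant under $*$) are exactly the standard route. Note that the paper itself states this lemma as a well-known result without proof, so there is nothing to compare against; your argument supplies the missing details correctly.
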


We can use the first lemma to obtain information about the strong convergence of the inverses of Toeplitz matrices.

\begin{proposition}\label{p1.2}
Let $\phi\in L^\iy(\T)$. If $T_n(\phi)$ is stable, then $T(\phi)$
is invertible and
$$
T\iv(\phi) = \slim\; T_n\iv(\phi), 
\qquad
T\iv(\wt{\phi}) = \slim\; W_nT_n\iv(\phi)W_n.     
$$
\end{proposition}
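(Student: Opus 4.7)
The plan is to reduce the assertion to Lemma~\ref{l1.1} by augmenting the finite sections with the identity on the complementary subspace, and then to deduce the second formula from the first by applying the $W_n$-symmetrization identity in \eqref{f1.VVWW}.

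First I would introduce the operators $A_n := T_n(\phi)+Q_n = P_n T(\phi) P_n + Q_n$ on $\ell^2$, together with the convention that $T_n\iv(\phi)$ denotes the inverse of $T_n(\phi)$ on $\im P_n$ extended by zero on $\im Q_n$. Then
$$
A_n\iv = T_n\iv(\phi)+Q_n,\qquad \|A_n\iv\|\le\max(\|T_n\iv(\phi)\|,1),
$$
so stability of $\{T_n(\phi)\}$ implies stability of $\{A_n\}$. Since $P_n\to I$ and $Q_n\to 0$ strongly, we have $A_n\to T(\phi)$ strongly; moreover $A_n^* = P_n T(\bar\phi)P_n +Q_n \to T(\bar\phi)=T(\phi)^*$ strongly as well. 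Lemma~\ref{l1.1} then yields that $T(\phi)$ is invertible and $A_n\iv\to T\iv(\phi)$, $(A_n\iv)^*\to T\iv(\phi)^*$ strongly. Subtracting $Q_n\to 0$ gives the first identity $T_n\iv(\phi)\to T\iv(\phi)$ in the $\slim$ sense.

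For the second identity, I would first use the relation $W_n T(\phi) W_n = P_n T(\tph) P_n$ from \eqref{f1.VVWW}, together with $W_n = W_nP_n = P_n W_n$ and $W_n^2 = P_n$, to obtain
$$
W_n T_n(\phi) W_n \eql T_n(\tph),\qquad W_n T_n\iv(\phi) W_n \eql T_n\iv(\tph),
$$
where the inverses are taken on $\im P_n$. In particular $\{T_n(\tph)\}$ is stable since $\|W_n\|=1$. Applying the first part of the proposition to $\tph$ in place of $\phi$ then gives $T\iv(\tph)=\slim T_n\iv(\tph) = \slim W_n T_n\iv(\phi) W_n$, which is the second assertion.

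The argument involves no serious obstacle: the only point requiring a little care is keeping the bookkeeping between the compressed operators (acting on $\im P_n$) and their extensions to $\ell^2$ consistent, so that stability of $T_n(\phi)$ translates into stability of the full $\ell^2$-operators $A_n$ to which Lemma~\ref{l1.1} actually applies, and so that subtracting off $Q_n$ at the end preserves the strong limit of the inverses and their adjoints.
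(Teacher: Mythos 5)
Your proof is correct and follows essentially the same route as the paper: strong convergence of $T_n(\phi)$ and its adjoints combined with Lemma~\ref{l1.1} for the first identity, and the conjugation $W_nT_n(\phi)W_n=T_n(\tilde{\phi})$ to reduce the second identity to the first. The only difference is that you make explicit (via the augmentation $A_n=T_n(\phi)+Q_n$) the identification of finite sections with operators on $\ell^2$, which the paper handles by convention.
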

\begin{proof}
Since $P_n^*=P_n\to I$ strongly, it follows that $T_n(\phi)\to T(\phi)$ strongly and the same holds for the adjoints.
Lemma \ref{l1.1} implies the first statement. For the second one, observe that $W_n T_n(\phi) W_n=T_n(\wt{\phi})$ and proceed similarly.
Also note that $T(\wt{\phi})$ is the transpose of $T(\phi)$, thus also invertible.
\end{proof}

We will need a new definition and additional results about strong convergence in what follows (see also  \cite[Thm.~7.13]{BS}).
Let $\cA$ equal the set of all bounded operators $A$ defined on $\ell^{2}$ such that the operator
\[  
\twoc{W_{n}}{ V_{-n} } A \twor {W_{n}}{ V_{n} } = \twotwon{W_{n}AW_{n}}{W_{n}AV_{n}}{V_{-n}AW_{n}}{V_{-n}AV_{n}}
\]
 along with its adjoint (which replaces $A$ with $A^{*}$) converge strongly to operators defined on $\ell^{2} \oplus \ell^{2}$.
 In other words 
 $$
\pi(A) :=\slim \left( \twoc{W_{n}}{ V_{-n} } A \twor {W_{n}}{ V_{n} } \right) 
$$
exists.
 
 \begin{lemma}\label{3.SC}
The set  $\cA$ is a (closed in the operator topology) $C^{*}$-subalgebra of $L(\ell^2)$, and the map
$\pi:\cA\to L(\ell^2\oplus\ell^2)$  is a *-homomorphism.
 \end{lemma}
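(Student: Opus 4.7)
The plan is to reduce everything to the identity $W_n^2 + V_n V_{-n} = P_n + Q_n = I$, which allows one to factor the map into the product structure. Write $\pi_n(A) := \begin{pmatrix} W_n \\ V_{-n} \end{pmatrix} A \begin{pmatrix} W_n & V_n \end{pmatrix}$ so that $\pi(A) = \slim \pi_n(A)$ whenever the limit exists. A quick norm computation using $W_n^*W_n = P_n$, $V_n^* V_n = I$ and $W_n V_n = 0$ shows that the column and row operators have norm at most one, so $\|\pi_n(A)\| \le \|A\|$ for every $n$. Moreover, because $W_n^* = W_n$ and $V_n^* = V_{-n}$, one checks immediately that $\pi_n(A)^* = \pi_n(A^*)$.

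First I would verify the multiplicative property at the finite-$n$ level. Using $W_n^2 + V_n V_{-n} = I$, one has
\[
\begin{pmatrix} W_n & V_n \end{pmatrix}\begin{pmatrix} W_n \\ V_{-n}\end{pmatrix} = I,
\]
whence $\pi_n(AB) = \pi_n(A)\pi_n(B)$.  This is the only nontrivial algebraic identity required.

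Next I would check that $\cA$ is a $*$-subalgebra. Linearity of $\pi_n$ makes the linear-combination case trivial. Closure under the involution is immediate from $\pi_n(A^*) = \pi_n(A)^*$ together with the definition of $\cA$ (which requires convergence of both $\pi_n(A)$ and its adjoint). For closure under products, given $A,B\in\cA$, apply $\pi_n(AB) = \pi_n(A)\pi_n(B)$: since each factor converges strongly and is uniformly norm-bounded (by $\|A\|$ and $\|B\|$, using the observation above), the product converges strongly to $\pi(A)\pi(B)$; the same argument applied to $B^*A^*$ handles the adjoint. This simultaneously establishes $\pi(AB) = \pi(A)\pi(B)$ and $\pi(A^*) = \pi(A)^*$, so $\pi$ is a $*$-homomorphism.

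For closure in the operator-norm topology, I would use a standard three-$\varepsilon$ argument. Suppose $A_k \to A$ in norm with $A_k \in \cA$. For any $x \in \ell^2\oplus\ell^2$,
\[
\|\pi_n(A)x - \pi_m(A)x\| \le 2\|A - A_k\|\,\|x\| + \|\pi_n(A_k)x - \pi_m(A_k)x\|,
\]
using $\|\pi_n(\cdot)\| \le \|\cdot\|$. Choosing $k$ large so the first term is $<\varepsilon/2$, then $n,m$ large so the second is $<\varepsilon/2$ (possible since $\pi_n(A_k)$ is strongly Cauchy), shows $\pi_n(A)x$ is Cauchy, hence convergent. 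The same argument with $A$ replaced by $A^*$ handles the adjoint sequence. Thus $A\in\cA$. There is no real obstacle here; the only mild subtlety is remembering that $\slim$ as defined includes convergence of the adjoint, so every strong-convergence argument must be carried out once for the sequence and once for its adjoint.
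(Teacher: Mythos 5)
Your proof is correct and follows essentially the same route as the paper's (terse) argument: the identity $\twor{W_n}{V_n}\twoc{W_n}{V_{-n}} = W_n^2 + V_nV_{-n} = P_n+Q_n = I$ giving multiplicativity of $\pi_n$, the uniform bound $\|\pi_n(A)\|\le\|A\|$ from the norm-one factors, and a Cauchy-sequence argument for norm-closedness, exactly as the paper indicates (and as in the cited B\"ottcher--Silbermann reference). No gaps.
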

 \begin{proof}
 It is easy to see that the sum, the product and the involution are closed operations in $\cA$ and at the same time that $\pi$ is a *-homomorphism.
 Using that the norms of $W_n$ and $V_{\pm n}$ are one, one can conclude the map $\pi$ is bounded. The fact that 
 $\cA$ is closed can be shown straightforwardly using a Cauchy sequence argument (see also \cite[Thm.~7.13]{BS}).
 %
 \end{proof}

We now relate Toeplitz and Hankel operators to $\cA$ and $\pi$. 

\begin{lemma}\label{2.4}
For $\phi$ in $L^{\infty}(\T)$ the operators $T(\phi)$ and $H(\phi)$ belong to $\cA$. Moreover,
$$
\pi(T(\phi))=\twotwon{ T(\tilde{\phi}) }{ H(\tilde{\phi})}{  H(\phi)}{T(\phi)},\qquad
\pi(H(\phi))=\twotwon{0}{0}{0}{0}.
$$
\end{lemma}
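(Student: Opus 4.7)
The plan is to compute each of the four blocks $W_n A W_n$, $W_n A V_n$, $V_{-n} A W_n$, $V_{-n} A V_n$ (and their adjoints) for $A=T(\phi)$ and $A=H(\phi)$, using only the algebraic identities (\ref{f1.THVW}) and (\ref{f1.VVWW}) together with the basic facts that $P_n\to I$ strongly and $V_{-n}\to 0$ strongly. No deep input is required beyond Hankel/Toeplitz commutation and the $\ell^2$ tail of Fourier coefficients.

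For $A=T(\phi)$, I would use $W_n=W_nP_n=P_nW_n$ and $W_n^2=P_n$ to rewrite each block as $P_n$ composed with a fixed bounded operator:
\begin{align*}
W_nT(\phi)W_n &= P_nT(\tilde\phi)P_n,\\
V_{-n}T(\phi)V_n &= T(\phi),\\
W_nT(\phi)V_n &= W_n\bigl(P_nT(\phi)V_n\bigr) = W_n\cdot W_nH(\tilde\phi) = P_nH(\tilde\phi),\\
V_{-n}T(\phi)W_n &= \bigl(V_{-n}T(\phi)P_n\bigr)W_n = H(\phi)W_n\cdot W_n = H(\phi)P_n.
\end{align*}
Since $P_n\to I$ strongly, each block converges strongly to the corresponding entry of the claimed $\pi(T(\phi))$. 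Adjoint strong convergence is obtained by rerunning the same calculation with $\phi$ replaced by $\bar\phi$, using $T(\phi)^*=T(\bar\phi)$.

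For $A=H(\phi)$, the goal is to show all four blocks converge strongly to $0$. Three are immediate from shift identities: by $V_{-n}H(\phi)=H(\phi)V_n$ and $V_nV_n=V_{2n}$,
$$V_{-n}H(\phi)V_n = V_{-2n}H(\phi) \to 0 \text{ strongly},$$
since $V_{-m}\to 0$ strongly. Also $W_nH(\phi)V_n = W_nV_{-n}H(\phi)$, and a direct entry computation yields $\|W_nV_{-n}y\|^2=\sum_{k=n}^{2n-1}|y_k|^2\to 0$, so $W_nV_{-n}\to 0$ strongly. It remains to prove $H(\phi)W_n\to 0$ strongly; once this is in hand, $W_nH(\phi)W_n\to 0$ and $V_{-n}H(\phi)W_n\to 0$ follow at once because $\|W_n\|,\|V_{-n}\|\le 1$. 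To establish it I would approximate by finitely supported vectors: for $x=\sum_{k=0}^Mx_ke_k$, $W_nx=\sum_{k=0}^Mx_ke_{n-1-k}$, so
$$\|H(\phi)W_nx\|\le\sum_{k=0}^M|x_k|\,\|H(\phi)e_{n-1-k}\|,$$
and the columns of $H(\phi)$ satisfy $\|H(\phi)e_m\|^2=\sum_{j>m}|\phi_j|^2\to 0$ because $\phi\in L^\infty(\T)\subset L^2(\T)$. Uniform boundedness and density of finitely supported vectors then yield the strong convergence, and the adjoint case is identical since $H(\phi)^*=H(\overline{\tilde\phi})$ is again a Hankel operator with bounded symbol.

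The main obstacle is precisely this step $H(\phi)W_n\to 0$: because $H(\phi)$ need not be compact and $W_n$ converges only weakly to $0$, no abstract compactness argument applies. The crucial point is the Hankel structure, which turns the columns $H(\phi)e_m$ into Plancherel tails of $\phi$; once this is recognized, everything else in the lemma reduces to bookkeeping with the identities (\ref{f1.THVW}) and (\ref{f1.VVWW}).
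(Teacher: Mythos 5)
Your proof is correct, and for the Toeplitz part it coincides with the paper's: both rewrite the four blocks via (\ref{f1.THVW}) and (\ref{f1.VVWW}) as $P_n$ or $W_n$ composed with fixed bounded operators and let $P_n\to I$ strongly. The difference is in the Hankel part. The paper disposes of all four Hankel blocks in one stroke by applying the second identity in (\ref{f1.THVW}) in the form $H(\phi)W_n=V_{-n}T(\phi)P_n$, so that every block becomes $V_{-n}$ (or $V_{-2n}$, or $W_nV_{-n}$) following a strongly convergent bounded sequence, and $V_{-n}\to0$ strongly finishes it. You instead prove $H(\phi)W_n\to0$ from scratch by estimating the columns $H(\phi)e_m$ as Plancherel tails $\sum_{j>m}|\phi_j|^2$ of the $L^2$ function $\phi$, plus uniform boundedness and density. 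This is a valid and genuinely self-contained argument (it is essentially a re-proof of the identity's consequence at the level of matrix entries), but it is more work than necessary: the identity $V_{-n}T(\phi)P_n=H(\phi)W_n$ that you already invoke for the Toeplitz blocks gives $H(\phi)W_n\to0$ immediately, since $T(\phi)P_n\to T(\phi)$ strongly with uniformly bounded norms and $V_{-n}\to0$ strongly. Your handling of the remaining blocks ($V_{-2n}H(\phi)$, $W_nV_{-n}H(\phi)$) and of the adjoints via $T(\phi)^*=T(\bar\phi)$ and $H(\phi)^*=H(\overline{\tilde\phi})$ matches the paper's.
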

\proof
We consider first the Toeplitz operator. We use the identities
\begin{align*}
W_n T(\phi) W_n &= P_n T(\tilde{\phi})P_n, &\quad W_n T(\phi) V_n &= P_n H(\tilde{\phi}),\\
V_{-n} T(\phi) W_n &=  H(\phi) P_n ,&  \quad V_{-n} T(\phi) V_n &= T(\phi),
\end{align*}
which we stated at the beginning of the section, to show the strong convergence.
For the Hankel operator we consider 
\begin{align*}
W_n H(\phi) W_n &= W_n V_{-n} T(\phi)P_n , &\quad W_n H(\phi) V_n &= W_n V_{-n} H(\phi),\\
V_{-n} H(\phi) W_n &=  V_{-2n} T(\phi) P_n ,&  \quad V_{-n} H(\phi) V_n &= V_{-2n}H(\phi),
\end{align*}
and the strong convergence follows because $V_{-n}\to 0$ strongly. For the adjoints the argumentation is analogous.
\qed

If we abbreviate
$$
L(\phi):= \twotwon{ T(\tilde{\phi}) }{ H(\tilde{\phi})}{  H(\phi)}{T(\phi)},
$$
then  \eqref{T1} and \eqref{H1} imply that 
\bq\label{f.L12}
L(\phi_1\phi_2)=L(\phi_1)L(\phi_2)\,.
\eq
This is not surprising since by an appropriate identification of $\ell^2\oplus \ell^2$ with $\ell^2(\Z)$ it is easily seen that $L(\phi)$ is 
the Laurent operator with symbol $\phi$.

The following result is what we will need in the next section. Notice that in the case of $c=1$ we have that 
$\pi(A)$ is the identity operator on $\ell^2\oplus \ell^2$.

\begin{proposition}\label{p2.5}
Let $A=T\iv(\psi)(T(c)+H(\phi))T\iv(\psi\iv)$ where
$c,\phi,\psi\in L^\iy(\T)$ are such that $T(\psi^{\pm 1})$ are invertible. Then
 $A\in \cA$ and 
$$
\pi(A)=\twotwon{T(\tc)}{H(\tc)}{H(c)}{T(c)}.
$$
\end{proposition}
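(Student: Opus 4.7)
The plan is to bypass any direct strong-convergence calculation and instead exploit the $C^\ast$-algebra structure of $\cA$ established in Lemma \ref{3.SC}.

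First, I would observe that $\cA$ contains the identity operator $I = T(1)$ (by Lemma \ref{2.4}), so $\cA$ is a \emph{unital} $C^\ast$-subalgebra of $\cL(\ell^2)$ sharing the same unit. A standard consequence, obtained by applying the continuous functional calculus to the positive element $B^\ast B$, is that $\cA$ is \emph{inverse-closed}: if $B \in \cA$ is invertible in $\cL(\ell^2)$, then $B^{-1} \in \cA$.

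By hypothesis $T(\psi^{\pm1})$ are invertible on $\ell^2$, and both lie in $\cA$ by Lemma \ref{2.4}. Therefore $T^{-1}(\psi), T^{-1}(\psi^{-1}) \in \cA$, and since $\pi$ is a $\ast$-homomorphism, together with \eqref{f.L12} one obtains
$$
\pi\bigl(T^{-1}(\psi)\bigr) = L(\psi)^{-1} = L(\psi^{-1}), \qquad \pi\bigl(T^{-1}(\psi^{-1})\bigr) = L(\psi),
$$
since $L(\psi)L(\psi^{-1}) = L(1) = I$. Also $T(c)+H(\phi) \in \cA$ by Lemma \ref{2.4} and linearity of $\pi$, with
$$
\pi\bigl(T(c)+H(\phi)\bigr) = L(c) + 0 = L(c).
$$
Combining via multiplicativity of $\pi$, we conclude $A \in \cA$ and
$$
\pi(A) = L(\psi^{-1})\,L(c)\,L(\psi) = L(\psi^{-1}c\psi) = L(c) = \twotwon{T(\tc)}{H(\tc)}{H(c)}{T(c)},
$$
using \eqref{f.L12} one more time.

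The only nontrivial point is the inverse-closedness of $\cA$ inside $\cL(\ell^2)$; this is precisely where the $C^\ast$-structure (rather than a mere $\ast$-algebra structure) matters. Without it, one would have to verify the strong convergence of $W_n T^{-1}(\psi) W_n$, $V_{-n} T^{-1}(\psi) V_n$, $W_n T^{-1}(\psi) V_n$, $V_{-n} T^{-1}(\psi) W_n$, together with their adjoints, directly; this would presumably require stability of $T_n(\psi)$ and an appeal to Proposition \ref{p1.2}, and would need extra hypotheses on $\psi$ beyond invertibility of $T(\psi^{\pm1})$ on $\ell^2$.
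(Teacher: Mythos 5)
Your proof is correct and follows essentially the same route as the paper: both arguments rely on $\cA$ being a unital $C^\ast$-subalgebra (hence inverse-closed) together with the $\ast$-homomorphism property of $\pi$ and the multiplicativity \eqref{f.L12} to get $\pi(A)=L(\psi^{-1})L(c)L(\psi)=L(c)$. The only cosmetic difference is that you deduce $\pi(T^{-1}(\psi))=L(\psi^{-1})$ from $L(\psi)L(\psi^{-1})=L(1)=I$, whereas the paper inverts the $2\times2$ operator matrix directly via \eqref{T1} and \eqref{H1}.
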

\proof
Since $\cA$ is a C*-algebra (hence inverse closed) and $\pi$ is a *-homomorphism, it follows that $T\iv(\psi)\in\cA$ and 
$$
\pi(T\iv(\psi))=(\pi(T(\psi)))\iv = \twotwon{T(\tps) }{ H(\tps)}{ H(\psi)}{T(\psi)}^{-1}
=\twotwon{T(\tps\iv) }{ H(\tps\iv)}{ H(\psi\iv)}{T(\psi\iv)}=L(\psi\iv)\,.
$$
The inversion of the operator matrix follows from \eqref{T1} and \eqref{H1}.
Similarly, we obtain 
$$
\pi(T\iv(\psi\iv)) = \twotwon{T(\tps) }{ H(\tps)}{ H(\psi)}{T(\psi)}=L(\psi)
$$ and 
$\pi(T(c)+H(\phi))=L(c)$.
Using \eqref{f.L12} we obtain 
$\pi(A)=L(\psi\iv) L(c) L(\psi) = L(c)$, which is the formula for $\pi(A)$.
\qed


\section{Separation theorems}

We now establish a separation theorem, which we are formulation in a quite general setting.

\begin{theorem}\label{t5.1}
Let $\psi,\phi\in L^\iy(\T)$ with $\phi\tilde{\phi}=1$  be such that $T(\psi)$ is invertible on $\ell^2$
and such that the sequence 
\be\label{f.An}
A_n=P_n T\iv(\psi) M(1,\phi) T\iv(\psi\iv) P_n
\ee
is stable. Moreover, assume that 
$c\in B^1_1$ is nonvanishing and has winding number zero and that $d\in B_1^1$ has a Wiener-Hopf factorization $d=d_+\td_+\iv$ in $B^1_1$.
Then 
$$
\lim_{n\to\infty}
\frac{
\det\Big( P_{n} T\iv(\psi) M(c, c d \phi )  T\iv(\psi^{-1}) P_{n}\Big)}{
G[c]^n  \det \Big( P_{n}\,T^{-1}(\psi) M(1, \phi ) T\iv(\psi^{-1})P_n\Big) }
 = E,
 $$
where $G[c]=\exp([\log c]_0)$ and
\begin{eqnarray}\label{E.const}
E&=&\det\Big( T\iv(\tc \td_+) T(\tc) T(\td_+)\Big) \times \det \Big( T(cd_+)T(c\iv d_+\iv)\Big) \times 
\\
\nonumber
&&\det \Big( T\iv(c d_+) T\iv(\psi) M(c,cd\phi) T\iv(\psi\iv) T(d_+) T(\psi\iv) M\iv(1,\phi)T(\psi)\Big).
\end{eqnarray}
\end{theorem}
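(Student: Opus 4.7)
The plan is to factor out the smooth contributions $c$ and $d_+$ from the operator $\Phi := T\iv(\psi) M(c, cd\phi) T\iv(\psi\iv)$ and reduce to the ``singular'' operator $A := T\iv(\psi) M(1,\phi) T\iv(\psi\iv)$, with a trace class error producing the constant $E_3$.

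First I would establish the algebraic identity
\[
M(c, cd\phi)\,T(d_+) \;=\; T(cd_+)\,M(1,\phi) \;+\; T(c_-)\,H(c_+ d_+)\,T(\tph),
\]
by writing $c = c_- c_+$ (absorbing $G[c]$ into $c_+$) and using the relation $d\tilde d_+ = d_+$ (from $d = d_+\tilde d_+\iv$), together with the product identities \eqref{T1}, \eqref{H1}, \eqref{T2} and the vanishing rules $H(\tilde d_+) = 0$ and $H(c_-) = 0$. Since $c, d_+ \in B_1^1$, we have $c_+ d_+ \in B_1^1$, hence $H(c_+ d_+) \in \cC_1(\ell^2)$ and the remainder is trace class. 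Multiplying by $T(d_+\iv)$ on the right and conjugating by $T\iv(\psi)$ and $T\iv(\psi\iv)$ yields $\Phi = UAV + \tilde K$ with $U := T\iv(\psi)T(cd_+)T(\psi)$, $V := T(\psi\iv)T(d_+\iv)T\iv(\psi\iv)$, and $\tilde K \in \cC_1(\ell^2)$.

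Second I would verify that $\cE := T\iv(cd_+)\,\Phi\,T(d_+)\,A\iv$ is of the form $I + K_0$ with $K_0 \in \cC_1(\ell^2)$, so that $E_3 = \det\cE$ is a well-defined operator determinant on $\ell^2$. The key inputs are commutation-type identities such as $T\iv(\psi)T(cd_+)T(\psi) - T(cd_+) \in \cC_1$ and $T\iv(\psi\iv)T(d_+)T(\psi\iv) - T(d_+) \in \cC_1$, both obtained by expanding via \eqref{T1} and invoking that $c, d_+ \in B_1^1$ implies the Hankel operators $H(cd_+)$, $H(\widetilde{cd_+})$, $H(d_+)$ are trace class. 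Well-definedness of $E_1$ and $E_2$ follows similarly: rewriting via \eqref{T1} expresses them as perturbation determinants $\det(I - \cC_1)$.

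Third, from $\cE = T\iv(cd_+)\Phi T(d_+)A\iv$ and $T(d_+)T(d_+\iv) = I$ (valid since $d_+\iv \in H^\iy_+(\T)$), we obtain $\Phi = T(cd_+)\,\cE\,A\,T(d_+\iv)$, hence
\[
P_n \Phi P_n = P_n\,T(cd_+)\,\cE\,A\,T(d_+\iv)\,P_n.
\]
I would then split this by inserting $I = P_n + Q_n$ between the factors, producing a leading product plus cross terms. The stability of $A_n$ combined with Proposition~\ref{p1.2} and Lemma~\ref{l1.1b} yields strong convergence $A_n\iv \to A\iv$, controlling these cross terms. Standard Szegő-Widom asymptotics give $\det P_n T(cd_+) P_n \sim G[cd_+]^n E_2$, while $\det P_n T(d_+\iv) P_n = G[d_+\iv]^n$ exactly (since $T(d_+\iv)$ is lower triangular), and the trace class middle factor gives $\det P_n \cE P_n \to E_3$. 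Using $G[cd_+]\cdot G[d_+\iv] = G[c]$ isolates the leading exponential behaviour.

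The principal obstacle is the identification of the remaining constant $E_1$, which does not arise from the ``forward'' analysis above. It emerges from the ``backward'' side of the finite-section decomposition: the cross terms involving $P_n T(cd_+) Q_n$ must be analyzed via the reflection identity $W_n T(cd_+) W_n = P_n T(\tc\td_+) P_n$ of \eqref{f1.VVWW} together with Lemma~\ref{2.4}. Invoking the $\ast$-homomorphism $\pi$ and the algebra $\cA$ from Lemma~\ref{3.SC}, the backward limits produce a second Szegő-Widom-type constant matching $E_1$. Proving the precise trace norm convergence of these cross terms, and showing that they combine with $A_n\iv$ to give exactly $E_1$, is the delicate technical step, which requires full use of the stability hypothesis on $A_n$ and the operator-theoretic machinery developed in Section~4.
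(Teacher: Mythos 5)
Your proposal follows essentially the same route as the paper's proof: the same factorization $M(c,cd\phi)=T(cd_+)M(1,\phi)T(d_+^{-1})+\mathrm{trace\ class}$, the same $P_n/Q_n$ splitting of the finite sections with the cross terms controlled via the $W_n,V_{\pm n}$ identities and the homomorphism $\pi$ of Section~4, and the same identification of the three constants (Szeg\H{o}--Widom for $T_n(cd_+)$, the forward strong limit for the large operator determinant, and the backward $W_n$-limit for $\det T^{-1}(\tilde c\,\tilde d_+)T(\tilde c)T(\tilde d_+)$). The plan is correct; the only part left unexecuted -- the trace-norm convergence of the cross terms and the explicit evaluation of the backward limit -- is carried out in the paper exactly along the lines you indicate.
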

\begin{proof}
We note that the conditions on $c$ and $d_+$ imply the invertibility of $T(cd_+)$ and $T(\tc\td_+)$ and the stability of $T_n(cd_+)$. 
Because $T(\psi)$ is invertible (and hence $\psi\iv \in L^\iy(\T)$) and one can conclude that $T(\tilde{\psi}\iv)$ is invertible.
Indeed,  the formula
$$ 
T\iv(\tilde{\psi}\iv)= T(\tilde{\psi})-H(\tilde{\psi})T\iv(\psi)H(\psi).
$$
can be verified straightforwardly using \eqref{H1} and \eqref{T2}.
Note that $T(\tilde{\psi}\iv)$ is the transpose of $T(\psi\iv)$, which thus is also invertible.
Furthermore the stability of $A_n$ implies the invertibility of $T\iv(\psi) M(1,\phi) T\iv(\psi\iv)$.
Hence $M(1,\phi)$ is invertible.
From the proof below it will follow that the operator determinants in \eqref{E.const} are well-defined,
by which we mean that the underlying operator is identity plus a trace class operator.

We start by looking at $M(c,cd\phi)$ modulo trace class operators. It equals
\begin{align*}
T(c)+H(cd_+\td_+\iv \phi) 
&=(T(c d_+)+H(cd_+\phi))T(d_+\iv)
\\
&=\left(T(cd_+)+T(cd_+)H(\phi)+H(cd_+)T(\tph)\right)T(d_+\iv)
\\
&=T(cd_+)M(1,\phi) T(d_+\iv)+\mbox{ trace class.}
\end{align*}
Hence modulo trace class, $T\iv(\psi) M(c,cd\phi) T\iv(\psi\iv)$ equals
$$
T\iv(\psi)  T(cd_+)M(1,\phi) T(d_+\iv) T\iv(\psi\iv).
$$
Since the commuators $[T\iv(\psi),T(cd_+)]$ and $[T(d_+\iv),T\iv(\psi\iv)]$ are trace class, it follows that 
$$
T\iv(\psi) M(c,cd\phi) T\iv(\psi\iv)
= T(cd_+) T\iv(\psi) M(1,\phi) T\iv(\psi\iv) T(d_+\iv)+K_1
$$
with a certain trace class operator $K_1$.
Now multiply with $P_n$ from the left and the right hand side and write
$$
P_n T(cd_+) A T(d_+\iv) P_n
=
P_n T(cd_+) P_n A P_n T(d_+\iv) P_n +
P_n T(cd_+) Q_n A Q_n T(d_+\iv) P_n +
$$
$$
P_n T(cd_+) Q_n A P_n T(d_+\iv) P_n +
P_n T(cd_+) P_n A Q_n T(d_+\iv) P_n 
$$
with $A:= T\iv(\psi) M(1,\phi) T\iv(\psi\iv)$.
We analyse the last three terms. First, using $P_n=W_n^2$, $Q_n=V_{n}V_{-n}$, we see that
$$
 P_n A Q_n T(d_+\iv) P_n = W_n \Big(W_n A V_n\Big)  H(d_+\iv) W_n 
 $$
 tends to zero in trace norm because $W_n A V_n\to 0$ strongly (see Proposition \ref{p2.5}).
 Secondly, 
 $$
P_n T(cd_+) Q_n A P_n = W_n H(\tc \td_+) \Big( V_{-n} A W_n\Big) W_n
$$
tends also to zero in trace norm because $(V_{-n} A W_n)^*\to 0$ strongly (again by Prop.~\ref{p2.5}). 
This implies that the last two terms of the above expressions tend to zero in trace norm.
Finally,
$$
P_n T(cd_+) Q_n A Q_n T(d_+\iv) P_n 
= W_n H(\tc\td_+) \Big(V_{-n} A V_n \Big) H(d_+\iv) W_n
$$
Here $V_{-n} A V_n\to I$ strongly (by Prop.~\ref{p2.5}). Hence the latter is $W_n H(\tc\td_+)  H(d_+\iv) W_n$ plus a sequence tending to zero in trace norm.

Summarizing, we have so far 
\begin{align*}
B_n &:=P_n T\iv(\psi) M(c,cd\phi) T\iv(\psi\iv) P_n
\\
&
= T_n(cd_+) P_n A P_n T_n(d_+\iv) + P_n K_1 P_n + W_n L_1 W_n + D_n^{(1)}
\end{align*}
with $K_1$ and $L_1=H(\tc\td_+)  H(d_+\iv)$ being trace class operators and $D_n^{(1)}$ being a sequence tending to zero in trace norm.
Now we take the inverses of $T_n(cd_+)$, $P_n A P_n=:A_n$, and $T_n(d_+\iv)$. 
Thus,
$$
T_n\iv(cd_+) B_n T_n(d_+\iv) A_n\iv =
$$
$$ 
P_n + T_n\iv(cd_+) P_n K_1 P_n T_n(d_+\iv) A_n\iv+
T_n\iv(cd_+) W_n L_1 W_n T_n(d_+\iv) A_n\iv+ D_n^{(2)}
$$
with $D_n^{(2)}\to 0$ in trace norm due to stability. Using stability and the strong convergence of the above sequences and their 
adjoints it follows that
$$
T_n\iv(cd_+) P_n K_1 P_n T_n(d_+\iv) A_n\iv= P_n K P_n +  D_n^{(3)}
$$
and 
$$
T_n\iv(cd_+) W_n L_1 W_n T_n(d_+\iv) A_n\iv= W_nL W_n  +  D_n^{(4)}
$$
with $D_n^{(j)}\to 0$ in trace norm and $K,L$ being trace class. In the latter we use that the 
strong limits of 
$$
W_n T_n(cd_+) W_n , \quad W_n A W_n=W_nA_nW_n,\quad W_nT_n(d_+\iv)W_n
$$
(and their adjoints) exist. Indeed, apply Proposition \ref{p2.5}. Also, due to stability  their inverses have a strong limit. 

Thus
\begin{equation}\label{f.eqn}
T_n\iv(cd_+) B_n T_n\iv(d_+\iv) A_n\iv = P_n + P_nK P_n +W_n L W_n +D_n
\end{equation}
with $D_n\to 0$ in trace norm. Write
$$
P_n+P_nK P_n+W_nLW_n=(P_n+P_nKP_n)(P_n+W_n L W_n) - P_n K W_n L W_n
$$
with the last term tending to zero in trace norm as $W_n\to0$ weakly.
Now take determinants and it follows that 
$$
\lim_{n\to\iy}
\frac{\det B_n}{\det T_n(cd_+) \cdot \det A_n \cdot \det T_n(d_+\iv ) }
=
\det( I+K)\cdot \det (I+L).
$$
From the standard Szeg\"o Limit theorem we get 
$$
\det T_n(cd_+) \sim G[cd_+]^n \cdot  \det  T(cd_+)  T(c\iv d_+\iv),\qquad n\to\iy,
$$
while $\det T_n(d_+\iv)= G[d_+\iv]^n$. Together we get the exponential factor $G[c]=G[cd_+]\cdot G[d_+\iv]$.

It remains to identify trace class operators $K$ and $L$. This is most conveniently done by passing to strong limits
(and the strong limits after applying $W_n$ from both sides) in \eqref{f.eqn}.
We obtain
$$
T\iv(cd_+) B T\iv(d_+\iv) A\iv = I+K
$$
with $B=T\iv(\psi) M(c,cd\phi) T\iv(\psi\iv)$ and $A=T\iv(\psi) M(1,\phi) T\iv(\psi\iv)$, i.e., 
$$
I+K= T\iv(cd_+)T\iv(\psi) M(c,cd\phi) T\iv(\psi\iv) T(d_+) T(\psi\iv) M(1,\phi)\iv T(\psi).
$$
This gives one of the operator determinant in \eqref{E.const}. As for the $W_n$-limits we obtain
$$
T\iv (\tc \td_+) T(\tc) T\iv(\td_+\iv) = I+L.
$$
Here notice that $W_nB_nW_n\to T(\tc)$ and $W_nA_nW_n\to I$ (again by Proposition \ref{p2.5}).
Thus, 
$$
\det (I+L)= \det T\iv(\tc \td_+) T(\tc) T(\td_+),
$$
which is the remaining term in  \eqref{E.const} along with the constant term from the Szeg\"o-Limit theorem above.
\end{proof}

In order to use the previous theorem we have to know the stability of the sequence $A_n$ defined in \eqref{f.An}.
This is a non-trivial issue and is addressed in \cite{BEx}, where the following two theorems are proved.
These results include certain ``local'' operators, which we are not going to define here, but instead refer to  \cite{BEx}.

\begin{theorem}\label{t5.2}
Let $\phi$ and $\psi$ be of the form \eqref{f.psi} and \eqref{f.phi}. Assume that conditions 
 \eqref{f.16} 
 are satisfied. Then the sequence 
$$
A_n=P_{n}\,T^{-1}(\psi) M(1, \phi ) T\iv(\psi^{-1})P_n
$$ 
is stable if and only if the following conditions are satisfied
\begin{itemize}
\item[(i)]
the operator $M(1,\phi)$ is invertible on $\ell^2$,
\item[(ii)]
$\Re \gamma^+\notin 2\Z+1/2$ and $\Re \gamma^-\notin 2\Z -1/2$,
\item[(iii)] 
for each $1\le r\le R$, a certain the ``local'' operator $B(\alpha_r^+,\alpha_r^-,\gamma_r)$ is invertible.
\end{itemize}
\end{theorem}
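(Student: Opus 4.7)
The plan is to treat stability of $\{A_n\}$ via the standard C*-algebraic framework for finite-section sequences generated by Toeplitz and Hankel operators with piecewise continuous symbols, and then apply a local principle to reduce the question to invertibility of finitely many model operators attached to the singular points of $\phi$ and $\psi$. Concretely, I would embed $\{A_n\}$ into the C*-algebra $\cS$ of all sequences of the form $\{P_n B P_n + P_n K P_n + W_n L W_n + C_n\}$, where $B$ lies in the smallest closed $*$-subalgebra of $\cL(\ell^2)$ containing all Toeplitz and Hankel operators with piecewise continuous symbols together with $T\iv(\psi)$ and $T\iv(\psi\iv)$, the operators $K,L$ are compact, and $\|C_n\|\to 0$. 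By the general B\"ottcher--Silbermann theory, stability of $\{A_n\}$ is equivalent to invertibility of its coset in $\cS$ modulo the ideal of sequences with $B=K=L=0$. Since the center of this quotient contains a copy of $C(\T)$, Allan's local principle then reduces invertibility to invertibility of a family of local cosets, one for each $\tau\in\T$.

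For the necessity part, Proposition \ref{p2.5} gives the strong limit $A_n\to A := T\iv(\psi) M(1,\phi) T\iv(\psi\iv)$ together with its adjoints, so stability forces $A$ to be invertible; combined with invertibility of $T(\psi^{\pm 1})$ from \eqref{f.16} this is equivalent to (i). The sequence $W_n A_n W_n$ also has a strong limit, but using $\phi\tph=1$ its invertibility reduces to the same condition, so (i) captures all the global information. Conditions (ii) and (iii) then emerge by examining the local cosets at the singular points of $\phi$ and $\psi$.

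For the sufficiency part via localization, at any $\tau\in\T$ at which both $\phi$ and $\psi$ are continuous, the local symbol is scalar and the local coset is trivially invertible. At the fixed points $\tau=\pm 1$ of the involution $t\mapsto \bar t$, I would conjugate the local coset to a Mellin convolution operator on $L^2(\Rp)$ whose symbol is built from the parameters $\alpha^\pm$ carried by $T\iv(\psi^{\pm 1})$ and $\gamma^\pm=\alpha^\pm+\beta^\pm$ carried by $H(\phi)$; non-vanishing of this Mellin symbol along the critical line translates, after an explicit computation with Gamma factors, into the exclusions $\Re\gamma^+\notin 2\Z+1/2$ and $\Re\gamma^-\notin 2\Z-1/2$ of (ii). At each pair $\{\tau_r,\bar\tau_r\}$, the involution interchanges the two points, so the local coset lives in a $2\times 2$-matrix algebra over a Mellin algebra on $L^2(\Rp)$; the resulting $2\times 2$ Mellin operator is exactly the local operator $B(\alpha_r^+,\alpha_r^-,\gamma_r)$ of (iii).

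The main obstacle is the explicit identification of the local Mellin symbols, especially at $\pm 1$: the asymmetry between $2\Z+1/2$ and $2\Z-1/2$ reflects the extra shift in the index $j+k+1$ defining $H(a)$, and tracking how the two inverses $T\iv(\psi)$ and $T\iv(\psi\iv)$ contribute Gamma-function factors of opposite sign in $\alpha^\pm$ requires careful bookkeeping. At the paired points $\{\tau_r,\bar\tau_r\}$ one must in addition verify that the change of coordinates conjugating the local pair into a $2\times 2$ Mellin operator produces exactly $B(\alpha_r^+,\alpha_r^-,\gamma_r)$ with no lower-order corrections; this is where the non-evenness of the symbols first becomes essential, and it is the technical heart of the argument.
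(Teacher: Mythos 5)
Your proposal follows essentially the same route the paper indicates: the paper does not prove Theorem \ref{t5.2} itself but defers to the companion work \cite{BEx}, which applies the general stability results of \cite{R} to reduce stability to invertibility of a collection of operators --- the strong limit $A=T\iv(\psi)M(1,\phi)T\iv(\psi\iv)$ yielding condition (i), local operators at $t=\pm1$ yielding (ii), and $2\times2$ Mellin convolution operators $B(\alpha_r^+,\alpha_r^-,\gamma_r)$ at the paired points $\{\tau_r,\bt_r\}$ yielding (iii). Your outline matches this structure point for point, including the identification of the technical heart (the explicit local symbols), so it is consistent with the paper's (sketched) argument.
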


This theorem is proved in \cite{BEx} by using general stability results of \cite{R}. These general stability results imply that $A_n$ is stable if and only if
a certain collection of operators is invertible. Among these operators is the strong limit of $A_n$, i.e., the operator $A=T^{-1}(\psi) M(1, \phi ) T\iv(\psi^{-1})$. Thus it is necessary for stability that $M(1,\phi)$ is invertible. In addition, there occur ``local'' operators (associated to each point where $\psi$ or $\phi$ have jump discontinuities).
Invertibility of the local operators at $t=\pm1$ lead to conditions (ii). For the jumps at $t=\tau_r$
and $t=\bt_r$ the local operators are Mellin convolution operators $B(\alpha_r^+,\alpha_r^-,\gamma_r)$ with $2\times 2$ matrix valued symbol
defined in terms of the three parameters $\alpha_r^+,\alpha_r^-,\gamma_r\in \C$. As well known, the invertibility of such operators is equivalent to
a Wiener-Hopf factorization of the matrix symbol, which in general is an unaccessible problem.
Therefore, we have only the following results availabe \cite{BEx}.
On the positive side, part (c) covers the special cases we are  particularly interested in. 

\begin{theorem}\label{t5.3}
Let $\Re \alpha_r^\pm \notin \Z+1/2$.
\item[(a)]
If $B(\alpha_r^+,\alpha_r^-,\gamma_r)$ is invertible, then $\Re\gamma_r\notin \Z+1/2$;
\item[(b)]
If $\Re\gamma_r\notin \Z+1/2$, then $B(\alpha_r^+,\alpha_r^-,\gamma_r)$ is Fredholm with index zero;
\item[(c)]
If $\alpha_r=\alpha_r^+=\alpha_r^-$ and $\Re\gamma_r\notin \Z+1/2$, then $B(\alpha_r,\alpha_r,\gamma_r)$ is invertible.
\end{theorem}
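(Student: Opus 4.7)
The strategy is to apply the general theory of Mellin convolution operators with piecewise continuous $2\times 2$ matrix symbols (in the style of Duduchava and co-workers). The operator $B(\alpha_r^+,\alpha_r^-,\gamma_r)$ arises as the local representative of $T\iv(\psi)M(1,\phi)T\iv(\psi\iv)$ at the paired jumps $\tau_r$ and $\bar\tau_r$, and its Fredholm properties are determined by a $2\times 2$ matrix-valued Mellin symbol $\mathfrak{b}(z)$ whose entries are explicit elementary (essentially exponential) functions of $z$ and the three parameters.

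For part (a), the Fredholmness of a Mellin convolution operator requires $\det\mathfrak{b}(z)$ to be non-zero on the Mellin contour, including the points at infinity that govern its behavior at the endpoints of the spectrum. The condition $\Re\gamma_r\in\Z+1/2$ forces $\det\mathfrak{b}$ to have a zero at one of these endpoints (this mirrors, at the symbol level, the well-known degeneracy of $I+H(\phi)$ when the $\gamma$-parameters are half-integers). Hence $B$ cannot be Fredholm, let alone invertible, so invertibility implies $\Re\gamma_r\notin\Z+1/2$.

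For part (b), the hypotheses $\Re\alpha_r^\pm\notin\Z+1/2$ and $\Re\gamma_r\notin\Z+1/2$ together guarantee that $\det\mathfrak{b}(z)$ stays bounded away from zero on the entire Mellin contour with endpoints, which yields Fredholmness. The index is then minus the winding number of $\det\mathfrak{b}$. I would deform the parameters continuously to $(0,0,0)$ along a path keeping $\Re\alpha_r^\pm$ and $\Re\gamma_r$ outside $\Z+1/2$; at the endpoint the operator reduces to the identity (with index zero), and homotopy invariance of the index under continuous deformations of the symbol within the Fredholm regime gives index zero all along the path.

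For part (c), the extra symmetry $\alpha_r^+=\alpha_r^-=\alpha_r$ collapses the matrix symbol $\mathfrak{b}(z)$ to a form that admits an explicit Wiener--Hopf factorization — typically by block-diagonalizing (or upper/lower triangularizing) $\mathfrak{b}$ via a constant change of basis, so that the factorization problem reduces to two scalar Mellin factorizations governed by $\alpha_r$ and $\gamma_r$ separately. Combined with the index-zero Fredholmness from (b), the existence of such a factorization upgrades the statement to genuine invertibility. The main obstacle — and the reason the general non-symmetric case is left open and only feeds into Theorem \ref{t5.2} as a hypothesis — is precisely that for $\alpha_r^+\neq\alpha_r^-$ no closed-form Wiener--Hopf factorization of $\mathfrak{b}$ is known; this is the classical hard-problem bottleneck of matrix Mellin factorization.
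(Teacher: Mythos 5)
First, a caveat about the comparison itself: the paper does not actually prove Theorem~\ref{t5.3}. It is quoted from the companion paper \cite{BEx} (listed as ``in preparation''), so there is no in-paper argument to check your proposal against. Your general architecture --- realizing $B(\alpha_r^+,\alpha_r^-,\gamma_r)$ as a Mellin convolution operator with a $2\times2$ matrix symbol, reading Fredholmness off the non-vanishing of the symbol determinant on the compactified Mellin contour, the index off its winding number, and invertibility off a Wiener--Hopf factorization of the matrix symbol --- is exactly the framework the paper itself describes in the paragraph following Theorem~\ref{t5.2}, so you have identified the intended approach.

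That said, what you have written is a plan rather than a proof, and one step would fail as stated. In part (b) you propose to obtain index zero by deforming $(\alpha_r^+,\alpha_r^-,\gamma_r)$ to $(0,0,0)$ while keeping $\Re\alpha_r^\pm$ and $\Re\gamma_r$ off $\Z+1/2$. But the admissible parameter set is a disjoint union of products of vertical strips $\{\,k-1/2<\Re z<k+1/2\,\}$, $k\in\Z$, and only the component containing the origin can be joined to $(0,0,0)$ by such a path; homotopy invariance of the index says nothing about the components with $k\neq 0$, on which the theorem also claims index zero. That the index genuinely can differ between strips is already visible in the scalar model $T(u_{\tau,\beta})$, whose index is $-k$ when $|\Re\beta-k|<1/2$, so the vanishing in all strips is a nontrivial cancellation that must be exhibited by computing the winding number of the determinant of the symbol directly --- which your argument never does, because the symbol is never written down. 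The same omission affects (a), where you assert but do not verify that the determinant degenerates at an endpoint of the contour exactly when $\Re\gamma_r\in\Z+1/2$, and (c), where the claimed constant-basis diagonalization of the symbol in the symmetric case $\alpha_r^+=\alpha_r^-$ is precisely the content to be proved and is only asserted. In short: correct framework and correct logical skeleton, but the three explicit symbol computations that constitute the actual proof are missing, and the homotopy shortcut in (b) does not reach the statement in the generality claimed.
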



Let us return to the invertibility of the operator $M(1,\phi)$. For general Toeplitz+Hankel operators (with jump discontinuities)
invertibility is a delicate issue. In \cite{BE5} necessary and sufficient condtions for invertibility conditions were established for Toeplitz+Hankel operator 
$T(a)+H(b)$ with piecewise continuous $a,b$ satisfying the additional condition $a\ta=b\tb$. Since 
 $\phi\tph=1$ our operator $M(1,\phi)=I+H(\phi)$ falls into this class and we cite the 
corresponding result (Corollary 5.5 of \cite{BE5}).
For  sake of  simple presentation we only state it in the form that provides us a sufficient condition.
Note that $\wt{u_{\tau,\gamma}}=u_{\bt,-\gamma}$\,, which ensures $\phi\tph=1$.

\begin{theorem}
Let $\phi$ be of the form 
$$
\phi=  u_{1,\gamma^+} \; u_{-1,\gamma^-}\;
\prod_{r=1}^R u_{\tau_r,\gamma_r}\;   u_{\bar{\tau}_r,\gamma_r}\;,
$$
with distinct $\tau_1,\dots,\tau_R\in \T_+$ and assume 
$$
-3/2<\Re \gamma^+ <1/2,\quad 
-1/2<\Re \gamma^- <3/2,\quad 
-1/2<\Re \gamma_r<1/2.
$$
Then $M(1,\phi)=I+H(\phi)$ is invertible on $\ell^2$.
\end{theorem}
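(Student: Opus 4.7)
The plan is to reduce the statement to the cited Corollary 5.5 of \cite{BE5}, since by hypothesis $M(1,\phi)=I+H(\phi)=T(1)+H(\phi)$ is a Toeplitz plus Hankel operator with piecewise continuous symbols, and my claim is that the symbols $a=1$ and $b=\phi$ satisfy the compatibility condition $a\tilde a=b\tilde b$ required there.

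First I would verify the compatibility condition $\phi\tilde\phi=1$. Using the identity $\wt{u_{\tau,\gamma}}=u_{\bt,-\gamma}$ already recorded in the paper, I compute factor by factor:
\begin{align*}
\wt{u_{1,\gamma^+}}=u_{1,-\gamma^+},\qquad
\wt{u_{-1,\gamma^-}}=u_{-1,-\gamma^-},\qquad
\wt{u_{\tau_r,\gamma_r}}=u_{\bt_r,-\gamma_r},\qquad
\wt{u_{\bt_r,\gamma_r}}=u_{\tau_r,-\gamma_r}.
\end{align*}
Pairing each factor of $\phi$ with the matching factor of $\tph$ yields $u_{\tau,\gamma}\,u_{\tau,-\gamma}=u_{\tau,0}\equiv 1$ at every singular point, so $\phi\tph=1$ as required, and $a=1$ trivially satisfies $a\ta=1$.

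Next I would match the symbol $\phi$ to the form used in \cite{BE5}. Here $\phi$ is already written as a product of jump factors based at the points $1,-1,\tau_1,\dots,\tau_R,\bt_1,\dots,\bt_R$ of $\T$, with a trivial continuous prefactor $a_0\equiv 1$. The prefactor satisfies $a_0=1\in G_0C$, $a_0\ta_0=1$, and $a_0(\pm1)=1$, so the structural hypotheses of Corollary 5.5 of \cite{BE5} are met. The parameter hypotheses in the cited corollary are exactly the inequalities
\[
-3/2<\Re\gamma^+<1/2,\qquad -1/2<\Re\gamma^-<3/2,\qquad -1/2<\Re\gamma_r<1/2,
\]
which are precisely our assumptions; note the pair of exterior points $\tau_r$ and $\bt_r$ carry the same parameter $\gamma_r$, so each contributes the same inequality. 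The slight asymmetry in the strips at $\pm 1$ versus at $\tau_r,\bt_r$ is the standard Coburn-type shift that accounts for $b$ having the extra Hankel shift by one index relative to $a$; since we are quoting the theorem, no reproof of this is needed.

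Applying the cited corollary then directly gives invertibility of $M(1,\phi)=I+H(\phi)$ on $\ell^2$. The only potential obstacle I see is a normalization discrepancy: \cite{BE5} states the result for $I+H(\tph)$ rather than $I+H(\phi)$, as noted parenthetically earlier in the excerpt. This is handled by replacing $\phi$ by $\tph$ in the argument (the parameter conditions are preserved under $\gamma\mapsto -\gamma$ and $\tau_r\leftrightarrow \bt_r$, modulo the symmetric roles of $\gamma^\pm$), or by observing that invertibility of $I+H(\phi)$ and $I+H(\tph)$ are equivalent via transposition. With this cosmetic adjustment, the conclusion follows.
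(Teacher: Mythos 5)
Your proposal is correct and follows essentially the same route as the paper, which proves nothing here beyond citing Corollary 5.5 of \cite{BE5} after noting that $\wt{u_{\tau,\gamma}}=u_{\bt,-\gamma}$ guarantees $\phi\tph=1$. Your extra care about the $I+H(\phi)$ versus $I+H(\tph)$ normalization matches the paper's own parenthetical remark on this point, so nothing further is needed.
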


We will make further remarks on the invertibility of $M(1,\phi)$ in Section \ref{sec:9}.

As a conclusion of the previous three results we can give some sufficient condition for stability.
Notice that if (b) in Theorem \ref{t5.3} would imply invertibility (as is the case  in (c)), we would not need the extra condition $\al_r^+=\al_r^-$.

\begin{corollary}
Let $\phi$ and $\psi$ be of the form \eqref{f.psi} and \eqref{f.phi}. Assume that conditions 
 \eqref{f.16}  and \eqref{f.17} are satisfied and that in addition $\alpha_r=\alpha_r^+=\alpha_r^-$.
 Then $A_n$ is stable.
\end{corollary}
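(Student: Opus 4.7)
The plan is to verify the three conditions (i)--(iii) of Theorem \ref{t5.2} under the hypotheses of the corollary, which then yields stability of $A_n$.

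First, I would handle condition (i), the invertibility of $M(1,\phi)$. The function $\phi$ has precisely the form to which the cited invertibility result (the theorem just stated before the corollary) applies, and the parameter restrictions
$-3/2<\Re\gamma^+<1/2$, $-1/2<\Re\gamma^-<3/2$, and $|\Re\gamma_r|<1/2$ are exactly \eqref{f.17}. Hence $M(1,\phi)=I+H(\phi)$ is invertible on $\ell^2$ directly from our hypotheses.

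Next, for condition (ii), I would observe that $2\Z+1/2=\{\dots,-3/2,1/2,5/2,\dots\}$ and the assumption $-3/2<\Re\gamma^+<1/2$ from \eqref{f.17} is a \emph{strict} inequality, so $\Re\gamma^+$ avoids both endpoints $-3/2$ and $1/2$, and all other elements of $2\Z+1/2$ lie outside $[-3/2,1/2]$. Thus $\Re\gamma^+\notin 2\Z+1/2$. The argument for $\Re\gamma^-\notin 2\Z-1/2$ is identical, using $-1/2<\Re\gamma^-<3/2$ and $2\Z-1/2=\{\dots,-1/2,3/2,\dots\}$.

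Finally, for condition (iii), the additional assumption $\alpha_r=\alpha_r^+=\alpha_r^-$ puts us precisely into case (c) of Theorem \ref{t5.3}. The hypothesis $|\Re\alpha_r|<1/2$ from \eqref{f.16} gives $\Re\alpha_r^\pm\notin\Z+1/2$, and the hypothesis $|\Re\gamma_r|<1/2$ from \eqref{f.17} gives $\Re\gamma_r\notin\Z+1/2$. Therefore Theorem \ref{t5.3}(c) directly applies to conclude that each local operator $B(\alpha_r,\alpha_r,\gamma_r)$ is invertible. All three conditions of Theorem \ref{t5.2} are thus verified, and the stability of $A_n$ follows. No step is a genuine obstacle here: the corollary is essentially a bookkeeping combination of the three preceding results, with the role of the special assumption $\alpha_r^+=\alpha_r^-$ being solely to move from the Fredholm/index-zero statement of Theorem \ref{t5.3}(b) to the full invertibility of part (c).
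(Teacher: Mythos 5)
Your proof is correct and follows exactly the route the paper intends: the corollary is stated as "a conclusion of the previous three results," and your verification of conditions (i)--(iii) of Theorem \ref{t5.2} via the invertibility theorem for $M(1,\phi)$, the elementary interval check for condition (ii), and Theorem \ref{t5.3}(c) for the local operators is precisely that assembly. The paper omits the details, so your write-up is if anything more explicit than the original.
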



\section{Determinant computations}

In view of our separation theorem, we need to do two things. One is to evaluate the constant \eqref{E.const} and the other is to compute the asymptotics of the determinant  of 
$A_{n}.$ The goal of this section is to do the first, that is, evaluate the constant \eqref{E.const} which is given in terms of operator determinants.
Some of the factors  have been computed before, and the complicated one can be reduced to simpler ones, which also have been computed before.

We start with the following definitions and observations. 
For $A,B\in L^\iy(\T)$ for which $H(A)H(\tilde{B})$ is trace class and for which $T(A)$ and $T(B)$ are invertible, let
$$
E[A,B]= \det \Big( T\iv(A) T(AB) T\iv(B)\Big).
$$
Note that $H(A)H(\tilde{B})$ is trace class if one of the functions is in $B^1_1$. If both functions are smooth (and have a continuous logarithm), then 
it has been shown that 
$$
E[A,B] = \exp\Big(\tr\, H(\log A) H(\log \tilde{B})\Big) =\exp\Big(\sum_{k\ge 1}k  [\log A]_{k} [\log B]_{-k}\Big).
$$
{}From this formula it follows that 
$$
E[A,B]=E[\tilde{B},\tilde{A}]=E[A_+,B_-],
$$
where $A=A_-A_+$ and $B=B_-B_+$ are Wiener-Hopf factorization of functions in $B_1^1$.

This constant is related to the constant 
$$
E[C]= \det T(C) T(C\iv) = \exp\Big(\sum_{k\ge 1} k [\log C]_k [\log C]_{-k}\Big).
$$
appearing in the Szeg\"o-Widom limit theorem. In fact, we have
$E[C]=E[C,C]=E[C_+,C_-]$. 

Finally, for  a nonvanishing function $C\in B^{1}_1$ with winding number zero let
$$
F[C]=\det( I + T\iv(C)H(C))
$$
It was computed in \cite{BE4} that
\begin{align}
F[C] &=\exp\Big(-\frac{1}{2} \tr\, H(\log C)^2+ \tr\, H(\log C)\Big)
\nonumber
\\
&=\exp \Big(-\frac{1}{2} \sum_{k\ge 1} k [\log C]_k^2+\sum_{k\ge1} [\log C]_{2k-1}\Big).
\label{F[c]}
\end{align}
The previous determinant relates to a slighty more complicated determinant.
\begin{lemma}
Let $C\in B^1_1$ be nonvanishing and have winding number zero. 
Assume that $\phi$ has a bounded Wiener-Hopf factorization $\phi=\phi_- \tilde{\phi}_-\iv$. Then 
$$
\det(I + H(C) T\iv(C\phi))=\det( I+ H(C) T\iv(C))=F[C].
$$

\end{lemma}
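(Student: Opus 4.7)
The plan is to use the Wiener-Hopf factorization $\phi = \phi_- \tilde\phi_-^{-1}$ together with the algebraic identities \eqref{T1}, \eqref{H1}, \eqref{T2} to conjugate the operator inside the determinant into the simpler form $I+H(C)T^{-1}(C)$. Throughout, $H(C)$ is trace class since $C\in B_1^1$, so all operators of the form $I+K$ appearing below are well-defined Fredholm-determinant inputs.

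\textbf{Step 1: factor the Toeplitz inverse.} Since $\phi_-,\phi_-^{-1}\in H^\infty_-(\T)$, their tildes $\tilde\phi_-,\tilde\phi_-^{-1}$ lie in $H^\infty_+(\T)$. Writing the product as $C\phi = \phi_-\, C\, \tilde\phi_-^{-1}$ and applying \eqref{T2} (with the $H^\infty_-$-factor on the left and the $H^\infty_+$-factor on the right) gives $T(C\phi)=T(\phi_-)T(C)T(\tilde\phi_-^{-1})$. Each outer factor is invertible with $T(\phi_-)^{-1}=T(\phi_-^{-1})$ and $T(\tilde\phi_-^{-1})^{-1}=T(\tilde\phi_-)$, hence
\[
T^{-1}(C\phi)=T(\tilde\phi_-)\,T^{-1}(C)\,T(\phi_-^{-1}).
\]

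\textbf{Step 2: move $T(\tilde\phi_-)$ past the Hankel factor.} Since $\phi_-\in H^\infty_-$ we have $H(\phi_-)=0$. Using \eqref{H1} twice — once to rewrite $H(C)T(\tilde\phi_-)=H(C\phi_-)-T(C)H(\phi_-)=H(C\phi_-)$, and once more to rewrite $H(\phi_- C)=T(\phi_-)H(C)+H(\phi_-)T(\tilde C)=T(\phi_-)H(C)$ — we obtain $H(C)T(\tilde\phi_-)=T(\phi_-)H(C)$. Combining with Step 1,
\[
H(C)\,T^{-1}(C\phi) \;=\; T(\phi_-)\,H(C)\,T^{-1}(C)\,T(\phi_-^{-1}).
\]

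\textbf{Step 3: conclude via the Sylvester determinant identity.} Set $A:=T(\phi_-)$ and $B:=H(C)T^{-1}(C)T(\phi_-^{-1})$, both bounded, with $AB$ trace class (since $H(C)$ is). Using $T(\phi_-^{-1})T(\phi_-)=T(\phi_-^{-1}\phi_-)-H(\phi_-^{-1})H(\tilde\phi_-)=I$ (again because $H(\phi_-)=0$, or directly because $\phi_-^{-1}\in H^\infty_-$), one gets $BA=H(C)T^{-1}(C)$. The identity $\det(I+AB)=\det(I+BA)$ then yields
\[
\det\bigl(I+H(C)T^{-1}(C\phi)\bigr)=\det\bigl(I+H(C)T^{-1}(C)\bigr).
\]
A final application of $\det(I+XY)=\det(I+YX)$ to $X=H(C)$, $Y=T^{-1}(C)$ gives $\det(I+H(C)T^{-1}(C))=\det(I+T^{-1}(C)H(C))=F[C]$, proving the lemma.

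There is no real obstacle here: the proof is a short algebraic manipulation built entirely on \eqref{T1}, \eqref{H1}, \eqref{T2}, the vanishing $H(\phi_-)=0$, and the Sylvester identity. The only care required is to (i) keep track of which factors sit on the $H^\infty_+$ versus $H^\infty_-$ side so that \eqref{T2} applies cleanly, and (ii) verify trace-class-ness at each conjugation step, which is immediate from $C\in B_1^1$.
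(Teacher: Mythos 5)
Your proof is correct and follows essentially the same route as the paper: factor $T(C\phi)=T(\phi_-)T(C)T(\tilde{\phi}_-^{-1})$, use $H(C)T(\tilde{\phi}_-)=H(C\phi_-)=T(\phi_-)H(C)$ to exhibit $H(C)T^{-1}(C\phi)$ as a conjugate of $H(C)T^{-1}(C)$ by $T(\phi_-)$, and conclude by invariance of the determinant. Your explicit invocation of the Sylvester identity $\det(I+AB)=\det(I+BA)$ is just a more careful phrasing of the conjugation step the paper leaves implicit.
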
\label{l6.1}
\proof
We have
$$
T(C\phi)= T(\phi_-)T(C) T(\tilde{\phi}_-\iv)
$$
with the factors being bounded invertible operators. Using
$$
H(C)T(\tph_-)=H(C\phi_-)=T(\phi_-)H(C)
$$
we obtain
$$
H(C)T\iv(C\phi)= H(C) T(\tilde{\phi}_-) T\iv(C) T(\phi_-\iv)= T(\phi_-) H(C) T\iv(C) T(\phi_-\iv),
$$
which gives the assertion.
\qed

This next lemma illustrates some properties of the constant $E[A,B]$ which will be used later to simplify determinants. 

\begin{lemma}\label{lemma.det} 
Let $A, B, C \in L^\iy(\T)$ be such that $T(A)$ is invertible and $B$ and $C$ admit bounded factorizations.
\begin{enumerate}
\item[(a)]
If  $ H(A)H(\tilde{B})$ and $H(A)H(\tilde{C})$ are trace class, then $$E[A,BC] = E[A,B] \cdot E[A,C].$$
\item[(b)]
If  $ H(B)H(\tilde{A})$ and $H(C)H(\tilde{A})$ are trace class, then
$$E[BC,A] = E[B,A] \cdot E[C,A].$$
\end{enumerate}
\end{lemma}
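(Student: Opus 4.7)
The plan is to reduce (a) to the case $B, C \in H^\iy_-(\T)$ by first stripping off the $H^\iy_+$ factors of the bounded factorizations $B = B_-B_+$ and $C = C_-C_+$. For this, I first observe the absorption identity $E[A, XY] = E[A, X]$ whenever $Y, Y\iv \in H^\iy_+(\T)$: since $H(\tilde Y) = 0$, the Toeplitz identity gives $T(AXY) = T(AX)T(Y)$ and $T(XY) = T(X)T(Y)$, and the two $T(Y)$'s cancel in $T\iv(A)T(AXY)T\iv(XY)$ because $T(Y)T(Y\iv) = I$. Applying this to the pointwise factorization $BC = (B_-C_-)(B_+C_+)$, whose right factor $B_+C_+$ lies in $H^\iy_+$ and is invertible there, together with the same observation applied to $B$ and $C$ separately, reduces the problem to proving
$$ E[A, B_-C_-] \eql E[A, B_-]\, E[A, C_-] $$
with $B_-, C_- \in H^\iy_-$.

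The second step is the key algebraic identity. For $X \in H^\iy_-$ one has $H(X) = 0$, hence $T(XA) = T(X)T(A)$, and because $XA = AX$ pointwise, $T(AX) = T(X)T(A)$ as well. Applying this with $X = B_-C_-$ and using $T\iv(B_-C_-) = T\iv(C_-)T\iv(B_-)$ yields
$$ T\iv(A)\, T(AB_-C_-)\, T\iv(B_-C_-) \eql T\iv(A)\, T(B_-)\, T(C_-)\, T(A)\, T\iv(C_-)\, T\iv(B_-). $$
By the cyclic property of the Fredholm determinant and then inserting $I = T(A)T\iv(A)$ between $T(B_-)$ and $T(C_-)$, this determinant factors as $\det(U_1) \cdot \det(U_2)$, where
$$ U_1 \eql T\iv(B_-)T\iv(A)T(B_-)T(A), \qquad U_2 \eql T\iv(A) T(C_-) T(A) T\iv(C_-). $$
Using $T(B_-)T(A) - T(A)T(B_-) = H(A)H(\tilde B_-)$ and its analogue for $C_-$, one finds $U_1 = I + T\iv(B_-)T\iv(A)H(A)H(\tilde B_-)$ and $U_2 = I + T\iv(A)H(A)H(\tilde C_-)T\iv(C_-)$; after one further cyclic permutation their determinants are $E[A, B_-]$ and $E[A, C_-]$ respectively, which closes (a).

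For part (b), I invoke the previously noted symmetry $E[X, Y] = E[\tilde Y, \tilde X]$ to convert the statement into one covered by (a):
$$ E[BC, A] \eql E[\tilde A, \widetilde{BC}] \eql E[\tilde A, \tilde C \tilde B] \stackrel{\mathrm{(a)}}{=} E[\tilde A, \tilde C]\, E[\tilde A, \tilde B] \eql E[C, A]\, E[B, A], $$
using that the bounded factorizations of $B$ and $C$ induce ones for $\tilde B$ and $\tilde C$.

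The main technical obstacle, though routine, is to confirm that every operator whose determinant appears above is genuinely of the form $I + (\text{trace class})$. For example, $H(A)H(\tilde B_-)$ is trace class because the identity $H(\tilde B) = H(\tilde B_-)T(B_+)$ (which follows from $H(\tilde B_+) = 0$) gives $H(A)H(\tilde B_-) = H(A)H(\tilde B)T(B_+\iv)$; similarly for $C$. Once this bookkeeping is in place, the argument is purely algebraic and does not require any Szeg\"o-type formula or approximation by smooth symbols.
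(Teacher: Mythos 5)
Your argument is correct and follows essentially the same route as the paper's: both proofs strip off the plus factors of the bounded factorizations to reduce to $E[A,B_-C_-]$, and then exploit $T(AB_-)=T(B_-)T(A)$ together with the multiplicativity and cyclic invariance of the operator determinant to split the result into $E[A,B_-]\cdot E[A,C_-]$, with the same trace-class bookkeeping via $H(A)H(\tilde{B}_-)=H(A)H(\tilde{B})T(B_+^{-1})$. The only (harmless) deviation is in part (b), where the paper simply repeats the computation with the roles interchanged while you invoke the symmetry $E[X,Y]=E[\tilde{Y},\tilde{X}]$ --- which indeed holds beyond the smooth case in which the text states it, by transposition, since Hankel matrices are symmetric and $\det$ is transpose-invariant.
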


\proof
Let $B=B_-B_+$ and $C=C_-C_+$ be the bounded factorizations. 
Using \eqref{T1}--\eqref{T2} in what follows, we remark that 
$$H(A)H(\tilde{B}_-)=H(A)H(\tilde{B})T(B_+\iv)
\quad \mbox{ and }\quad 
H(A)H(\tilde{C}_-)=H(A)H(\tilde{C})T(C_+\iv)
$$ are trace class.
Analogously, $H(A)H(\tilde{B}\tilde{C}) = H(A)H(\tilde{B}_-\tilde{C}_-)T(B_+C_+)$.
Observe that 
\begin{align*}
H(A)H(\tilde{B}_-\tilde{C}_-) &= H(A)H(\tilde{B}_-)T(C_-) + H(A)T(\tilde{B}_-)H(\tilde{C}_-)
\\
&= H(A)H(\tilde{B}_-)T(C_-) + T(B_-)H(A) H(\tilde{C}_-)
\end{align*}
where we used $T(B_-)H(A)=H(B_-A)=H(AB_-)=H(A)T(\tilde{B}_-)$.
Therefore we conclude that $H(A)H(\tilde{B}\tilde{C})$ is trace class, too
The Toeplitz operators $T(B)$, $T(C)$, and $T(BC)$ are invertible due to the bounded factorizations. This implies that 
the three operator determinants are well-defined. A straightforward compuation using the factorizations yields that 
$$
E[A, B]=\det T\iv (A) T(AB) T\iv(B)=\det T\iv(A) T(AB_-) T(B_-\iv) = E[A,B_-]
$$ 
and similar statements for the other two determinants. In fact, we can write
$$
E[A,B]= \det T(B_-\iv) T\iv(A) T(AB_-),\qquad E[A,C]=\det T\iv(A) T(AC_-) T(C_-\iv)
$$
and multiplication yields
$$
\det T(B_-\iv) T\iv(A) T(B_-)T(A) T\iv(A) T(AC_-) T(C_-\iv)
$$
$$
= \det T(B_-\iv) T\iv(A) T(B_-) T(AC_-) T(C_-\iv)
$$
$$
=\det T\iv(A) T(AB_-C_-)T(C_-\iv B_-\iv),
$$
which is $E[A,BC]$. This proof (a). The proof of (b) is analogous.
%
%
\qed

The next theorem states that the operator determinant occuring in \eqref{E.const} is well defined under certain conditions (invertibility of $M(1,\phi)$),
and that it can be expressed in terms of above constants in case of a slightly stronger condition (invertibility of $T(\phi)$). Afterwards, when we specialize to 
the functions $\psi$ and $\phi$ with jump discontinuities we will see that the stronger condition is redundant for the evaluation of the constant.
Notice that we have already shown in Theorem \ref{t5.1} that operator determinant is well-defined, but under the (perhaps stronger) assumption of the stability of a certain sequence (which in fact implies invertibility of $M(1,\phi)$).

\begin{theorem}\label{t6.2}
Let $\psi,\phi\in L^\iy(\T)$ with $\phi\tilde{\phi}=1$ such that $T(\psi)$  is invertible on $\ell^2$.
Assume that $c\in B^1_1$ is nonvanishing and has winding number zero and that $d\in B^1_1$ has a factorization $d=d_+\td_+\iv$
in $B_1^1$.
\begin{itemize}
\item[(a)]
If $M(1,\phi)$ is invertible on $\ell^2$, then the following operator determinant is well-defined:
$$
E_1=\det \Big( T\iv(c d_+) T\iv(\psi) M(c,cd\phi) T\iv(\psi\iv) T(d_+) T(\psi\iv) M\iv(1,\phi)T(\psi)\Big).
$$
\item[(b)]
If $T(\phi)$ is invertible on $\ell^2$, then $M(1,\phi)$ is invertible, and
$$
E_1=
\frac{E[\psi,cd_+]}{E[cd_+,\psi]}\times E[d_+\iv ,\psi\iv] \times E[cd_+,\phi]\times 
\det\Big((T(cd_+\phi)+H(cd_+)) T\iv(cd_+\phi)\Big)\,.
$$
\item[(c)]
If $\phi$ and $\psi$ have a bounded factorization, then
$$
E_1=
\frac{E[\psi,c]}{E[c,\psi]}\times E[cd_+,\phi]\times 
F[cd_+]\,.
$$
\end{itemize}
\end{theorem}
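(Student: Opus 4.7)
For part (a), the task is to verify that the operator $O$ inside the determinant equals $I$ plus a trace class operator. This follows immediately from the trace-class identity established during the proof of Theorem \ref{t5.1}, namely
\[
T\iv(\psi) M(c, cd\phi) T\iv(\psi\iv) = T(cd_+) T\iv(\psi) M(1, \phi) T\iv(\psi\iv) T(d_+\iv) + K_1,
\]
with $K_1$ trace class. Substituting into the definition of $O$ and using the invertibility relations $T\iv(cd_+) T(cd_+) = I$, $T(d_+\iv) T(d_+) = I$, $T\iv(\psi\iv) T(\psi\iv) = I$, and $M\iv(1, \phi) M(1, \phi) = I$, the principal part collapses to the identity, leaving only a trace-class perturbation.

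For part (b), the plan is to decompose $O$ into a product of four operators whose individual determinants match the four factors on the right-hand side. Since $T(\phi)$ is invertible, $\phi$ admits a bounded canonical factorization $\phi = \phi_- \phi_+$. Using $T(c) = T(cd_+) T(d_+\iv)$ (valid since $d_+\iv \in H^\iy_+(\T)$) together with \eqref{T1}--\eqref{H1} applied to $H(cd\phi) = H(cd_+ \cdot \td_+\iv \phi)$ and the vanishing $H(\td_+\iv) = 0$, we expand
\[
M(c, cd\phi) = T(cd_+)\bigl[T(d_+\iv) + T(\td_+\iv) H(\phi)\bigr] + H(cd_+) T(d_+\iv \tph).
\]
We then systematically commute $T\iv(\psi)$ and $T\iv(\psi\iv)$ past the Toeplitz factors $T(cd_+)$ and $T(d_+\iv)$, each commutation producing a Hankel correction whose determinant identifies with one of $E[\psi, cd_+] / E[cd_+, \psi]$, $E[d_+\iv, \psi\iv]$, or $E[cd_+, \phi]$. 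The residual piece, built from $H(cd_+)$ and the invertible $T(\phi)$, assembles into $\det\bigl((T(cd_+\phi) + H(cd_+)) T\iv(cd_+\phi)\bigr)$. The main technical obstacle here is the careful bookkeeping of all correction terms so they combine exactly into the four claimed factors with no leftover.

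For part (c), we apply (b). Since $\phi$ has a bounded factorization and $\phi\tph = 1$, uniqueness of Wiener-Hopf factorization (up to multiplicative constants) forces $\phi = \phi_- \tph_-\iv$, so Lemma \ref{l6.1} with $C = cd_+$ yields $\det\bigl((T(cd_+\phi) + H(cd_+)) T\iv(cd_+\phi)\bigr) = F[cd_+]$. For the $E[\cdot, \cdot]$-factors, the bounded factorization of $\psi$ enables Lemma \ref{lemma.det}: $E[\psi, cd_+] = E[\psi, c] \cdot E[\psi, d_+]$ and $E[cd_+, \psi] = E[c, \psi] \cdot E[d_+, \psi]$. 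Here $E[\psi, d_+] = 1$ because $H(\td_+) = 0$ forces $T(\psi d_+) = T(\psi) T(d_+)$, so the Toeplitz products cancel. The remaining identity $E[d_+\iv, \psi\iv] = E[d_+, \psi]$ follows from the bounded-factorization identity $E[A, B] = E[A_+, B_-]$ (verified operator-theoretically using $T(A_-\iv) T(A_-) = I$ and $T(B_+) T(B_+\iv) = I$): both $E[d_+, \psi_-]$ and $E[d_+\iv, \psi_-\iv]$ equal the determinant of the four-factor product $T(d_+\iv) T(\psi_-) T(d_+) T(\psi_-\iv)$ up to a cyclic reordering, hence coincide by cyclic invariance of the trace-class determinant. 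Combining these simplifications yields
\[
\frac{E[\psi, cd_+]}{E[cd_+, \psi]} \cdot E[d_+\iv, \psi\iv] = \frac{E[\psi, c]}{E[c, \psi]},
\]
and (c) follows.
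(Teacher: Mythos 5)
Your parts (a) and (c) are essentially sound. For (a) you substitute the trace-class identity from the proof of Theorem \ref{t5.1} directly into $E_1$ and collapse the principal part to the identity; the paper instead builds $E_1$ up as a product of separately well-defined determinants (which simultaneously prepares the formula used in (b)), but your more direct route is valid, using $T(d_+\iv)T(d_+)=I$ and the invertibility of $T(\psi\iv)$, which follows from that of $T(\psi)$ as at the start of the proof of Theorem \ref{t5.1}. Your part (c) correctly supplies details the paper leaves implicit: $E[\psi,d_+]=1$ because $H(\td_+)=0$, and $E[d_+\iv,\psi\iv]=E[d_+,\psi]$ by reducing both to four-factor products of Toeplitz operators that differ by a cyclic permutation. (One small caveat you share with the paper's one-line proof of (c): $\phi\tph=1$ plus a bounded factorization only forces $\phi=\pm\phi_-\tph_-\iv$, and Lemma \ref{l6.1} requires the $+$ sign.)

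The genuine gap is in part (b). First, the statement of (b) asserts that $M(1,\phi)=I+H(\phi)$ is invertible whenever $T(\phi)$ is, and you never prove this; the paper does it via the identity $(I+H(\phi))(I-H(\phi))=I-H(\phi)H(\tph\iv)=T(\phi)T(\phi\iv)$ (using $\phi\tph=1$), which also yields the explicit inverse $M\iv(1,\phi)=(I-H(\phi))\,T\iv(\phi\iv)\,T\iv(\phi)$. Second, and more seriously, the entire computational content of (b) is the identity
$$
M(cd_+,cd_+\phi)\,M\iv(1,\phi)=\big(T(cd_+\phi)+H(cd_+)\big)\,T\iv(\phi),
$$
obtained from the algebraic computation $M(cd_+,cd_+\phi)(I-H(\phi))=(T(cd_+\phi)+H(cd_+))T(\phi\iv)$, again exploiting $\phi\tph=1$. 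In your write-up this is replaced by the assertion that ``systematically commuting'' Toeplitz factors produces corrections that combine exactly into the four claimed factors; you yourself flag the bookkeeping as the main obstacle, but that bookkeeping \emph{is} the proof. Note also that determinants do not distribute over the additive decomposition of $M(c,cd\phi)$ you display ($\det(I+K_1+K_2)\neq\det(I+K_1)\det(I+K_2)$ in general), so one cannot match summands to factors; one must, as the paper does, multiply the separately well-defined determinants $E[\psi,cd_+]/E[cd_+,\psi]$ and $E[d_+\iv,\psi\iv]$ against $\det\big(M(cd_+,cd_+\phi)M\iv(1,\phi)T\iv(cd_+)\big)$ and then use the displayed identity to split off $E[cd_+,\phi]$ and the remaining determinant. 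Without the inverse formula for $M(1,\phi)$ and the displayed identity, part (b) is not proved.
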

\begin{proof}
(a):\
Abbreviate $e=cd_+$ and write 
$$
M(e, e\phi)=T(e)+H(e\phi)= T(e)+T(e)H(\phi)+H(e)T(\tilde{\phi})=T(e)M(1,\phi)+K_1
$$
with $K_1$ being trace class. This implies 
$$
\det M(cd_+,cd_+\phi)M\iv(1,\phi) T\iv(cd_+)=
\det  M\iv(1,\phi) T\iv(cd_+) M(cd_+,cd_+\phi)
$$
is well-defined. Multiplying this determinant with the well-defined determinant 
\begin{align*}
E[d_+\iv,\psi\iv] 
&=\det T\iv(d_+\iv)T(\psi\iv d_+\iv)T\iv(\psi\iv)
\\
&=\det T(d_+\iv) T\iv(\psi\iv) T(d_+) T(\psi\iv)
\end{align*}
and  observing 
$M(cd_+,cd_+\phi) T(d_+\iv)= M(c,cd\phi)$ yields the well-defined determinant
$$
\det  M\iv(1,\phi) T\iv(cd_+) M(c,cd\phi) T\iv(\psi\iv) T(d_+) T(\psi\iv),
$$
which we can also write as
$$
\det  M(c,cd\phi) T\iv(\psi\iv) T(d_+) T(\psi\iv)  M\iv(1,\phi) T\iv(cd_+).
$$

Next observe that 
\begin{align*}
\frac{E[\psi,cd_+]}{E[cd_+,\psi]}
&=\det T(cd_+)T(\psi) T\iv(\psi cd_+)\cdot \det T(\psi cd_+)T\iv( cd_+) T\iv (\psi).
\\
&= \det T(cd_+) T(\psi)T\iv( cd_+) T\iv (\psi).
\end{align*}
This well-defined determinant we multiply to the above one to obtain
$$
\det  M(c,cd\phi) T\iv(\psi\iv) T(d_+) T(\psi\iv)  M\iv(1,\phi) T(\psi)T\iv( cd_+) T\iv (\psi),
$$
which is $E_1$. Summarizing, besides the issue of $E_1$ being well-defined we have shown that 
$$
E_1= \frac{E[\psi,cd_+]}{E[cd_+,\psi]}\times E[d_+\iv,\psi\iv] \times 
\det M(cd_+, cd_+ \phi) M\iv(1,\phi) T\iv(c d_+).
$$

(b) Now we are going to show that $M(1,\phi)$ is invertible if so is $T(\phi)$ and express the inverse.
Then we will compute the remaining determinant.
The identity 
$$
(I+H(\phi))\,(I-H(\phi))=I-H(\phi)H(\tilde{\phi}\iv) = T(\phi)\,T(\phi\iv)
$$
implies that $M(1,\phi)=I+H(\phi)$ is invertible. Moreover,
$$
M\iv(1,\phi)=
(I+H(\phi))\iv = (I-H(\phi))\;T\iv(\phi\iv)\, T\iv (\phi).
$$
Next observe that
$$
M(cd_+,cd_+\phi)(I-H(\phi))=T(cd_+)+H(cd_+\phi) - T(cd_+)H(\phi)-H(cd_+\phi)H(\phi)
$$
$$
=T(cd_+)+H(cd_+\phi) - H(cd_+\phi)+H(cd_+)T(\tilde{\phi})-T(cd_+\phi\tilde{\phi})+T(cd_+\phi)T(\tilde{\phi})
$$
$$
=H(cd_+)T(\tilde{\phi})+T(cd_+\phi)T(\tilde{\phi})
$$
$$
=(T(cd_+\phi)+H(cd_+))\; T(\phi\iv).
$$
Therefore,
$$
M(cd_+,cd_+\phi)M\iv(1,\phi)=M(cd_+,cd_+\phi)(I-H(\phi)) T\iv (\phi\iv)\, T\iv (\phi)
$$
$$
=
(T(cd_+\phi)+H(cd_+))T\iv(\phi),
$$
and thus
$$
\det\Big(
M(cd_+, cd_+ \phi) M\iv(1,\phi) T\iv(c d_+)\Big)
=\det\Big((T(cd_+\phi)+H(cd_+))T\iv(\phi) T\iv(cd_+)\Big).
$$
We split this into
$$
\det\Big((T(cd_+\phi)+H(cd_+)) T\iv(cd_+\phi)\Big)\times
\det\Big( T(cd_+\phi) T\iv(\phi) T\iv(cd_+)\Big)
$$
with the last determinant equal to $E[cd_+,\phi]$.
For part (c), we apply Lemma \ref{l6.1} and Lemma \ref{lemma.det}.
\end{proof}

Now we specialize to the functions we are interested in.

\begin{corollary}
Let $\psi$ and $\phi$ be given by \eqref{f.psi} and \eqref{f.phi} and assume that the condition \eqref{f.16} and \eqref{f.17} hold. 
Moreover, let $c=c_-G[c]c_+$ and $d=\td_+\iv d_+$ be factorizations in $B_1^1$ (see \eqref{f.c+-}--\eqref{f.dd+}). 
Then the operator determinant $E_1$ of Theorem \ref{t6.2} is well-defined and given by
\begin{align*}
\hat{E}_1=\frac{E_1}{F[e_+]}&=
c_0(1)^{-\alpha^+} c_0(-1)^{-\alpha^-}\prod_{r=1}^R c_0(\tau_r)^{-\alpha_r^+}c_0(\bt_r)^{-\alpha^-_r}
\\
&\quad \times e_+(1)^{\alpha^++\beta^+}e_+(-1)^{\alpha^-+\beta^-}
\prod_{r=1}^R
e_+(\tau_r)^{\alpha_r^++\beta_r}e_+(\bt_r)^{\alpha^-_r+\beta_r} \;
\end{align*}
with $e_+=c_+d_+$ and $c_0= c_+ c_-=c/G[c]$.
\end{corollary}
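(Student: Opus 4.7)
The plan is to apply Theorem \ref{t6.2}. Conditions \eqref{f.16}--\eqref{f.17} together with Theorem \ref{t.T.inv} and the invertibility theorem preceding the corollary ensure $T(\psi^{\pm 1})$ and $M(1,\phi)=I+H(\phi)$ are invertible, so Theorem \ref{t6.2}(a) guarantees that $E_1$ is a well-defined operator determinant. Both sides of the claimed identity are holomorphic in the parameters $\alpha^\pm,\alpha_r^\pm,\beta^\pm,\beta_r$ on the full region \eqref{f.16}--\eqref{f.17}, so it suffices to verify the formula on the open subregion where additionally $|\Re\gamma^\pm|<1/2$, where $T(\phi)$ is also invertible, and to conclude by analytic continuation.

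On this subregion Theorem \ref{t6.2}(b) expresses $E_1$ as a product of four factors. Setting $e_+=c_+d_+$ and using that $\log d$ is odd (so $[\log d_+]_0=0$ and $G[d_+]=1$), we obtain the $B_1^1$ factorization $cd_+=c_-\,G[c]\,e_+$ with $G[cd_+]=G[c]$ and $[\log(cd_+)]_k=[\log e_+]_k$ for $k\ge 1$. I would then evaluate each of the four factors. The factor $E[cd_+,\phi]$ is immediate from $E[A,B]=\exp(\tr H(\log A)H(\log\tilde B))=\exp(\sum_{k\ge 1}k[\log A]_k[\log B]_{-k})$ (valid because $\log(cd_+)\in B_1^1$) combined with the explicit Fourier expansion $[\log u_{\tau,\beta}]_{-k}=\beta\tau^k/k$ for $k\ge 1$; the sum collapses to pointwise evaluations, giving
$$E[cd_+,\phi]=e_+(1)^{\gamma^+}\,e_+(-1)^{\gamma^-}\prod_{r=1}^R e_+(\tau_r)^{\gamma_r}e_+(\bar{\tau}_r)^{\gamma_r}.$$
For $E[\psi,cd_+]/E[cd_+,\psi]$ and $E[d_+\iv,\psi\iv]$, the identities $T(c_-)T(\psi)T(c_-\iv)=T(\psi)$ and $T(d_+\iv)T(d_+)=I=T(d_+)T(d_+\iv)$ (both following from \eqref{T1}--\eqref{T2} together with $H(c_-)=H(\tilde c_-\iv)=H(\tilde d_+)=H(\tilde d_+\iv)=0$) collapse most of the Toeplitz products. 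What remains is, by the same Fourier-coefficient computation as above, pointwise evaluations of the smooth factor $c_0=c_-c_+$ at the Fisher--Hartwig points $\pm1,\tau_r,\bar{\tau}_r$ raised to the exponents $-\alpha^\pm$ and $-\alpha_r^\pm$, together with residual $e_+$-evaluations.

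The remaining factor is rewritten as $\det\bigl((T(cd_+\phi)+H(cd_+))T\iv(cd_+\phi)\bigr)=\det\bigl(I+H(cd_+)T\iv(cd_+\phi)\bigr)$, and using $H(cd_+)=G[c]T(c_-)H(e_+)$ (from \eqref{H1} and $H(c_-)=0$) together with $T(cd_+\phi)=G[c]T(c_-)T(\phi)T(e_+)$ (using $\phi e_+=e_+\phi$ and $H(\tilde e_+)=0$), cyclic reduction yields $\det(I+H(e_+)T(e_+)\iv T(\phi)\iv)$, which factors as $F[e_+]$ times an explicit pointwise correction in the $e_+(\cdot)^{-\beta}$ factors produced by the jumps of $\phi$ (via an argument analogous to Lemma \ref{l6.1}). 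Assembling the four contributions, the single $F[e_+]$ factor separates off and the pointwise evaluations of $e_+$ arising from (III), (IV), (I), and (II) accumulate to the exponents $\alpha^\pm+\beta^\pm=\gamma^\pm$ and $\alpha_r^\pm+\beta_r$, while the $c_0$-evaluations accumulate to $-\alpha^\pm,-\alpha_r^\pm$, yielding the claimed formula for $\hat E_1=E_1/F[e_+]$. The main obstacle is the meticulous bookkeeping of which contributions combine into $F[e_+]$ versus the pointwise evaluations; the universal workhorse is the identity $[\log u_{\tau,\beta}]_{\pm k}=\mp\beta\tau^{\mp k}/k$ for $k\neq 0$, which converts every trace of the form $\sum k[\log(\text{smooth})]_{\mp k}[\log u_{\tau,\beta}]_{\pm k}$ into an evaluation of the smooth function at $\tau$.
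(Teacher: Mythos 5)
Your overall strategy---well-definedness from Theorem \ref{t6.2}(a), analytic continuation in the parameters to a subregion where stronger hypotheses hold, and explicit evaluation of the $E[\cdot,\cdot]$ constants via the Fourier coefficients of $\log u_{\tau,\beta}$---is the same as the paper's. But there are two concrete problems in the execution. First, you continue only to the subregion $|\Re\gamma^\pm|<1/2$, where $T(\phi)$ is invertible, apply Theorem \ref{t6.2}(b), and then invoke ``an argument analogous to Lemma \ref{l6.1}'' to handle the fourth factor. Lemma \ref{l6.1} requires a \emph{bounded} Wiener--Hopf factorization $\phi=\phi_-\tilde{\phi}_-\iv$, and for $\phi$ built from the $u_{\tau,\gamma}$'s the factors $\eta_{\tau,\gamma},\xi_{\tau,-\gamma}$ are bounded together with their inverses only when the $\gamma$'s are purely imaginary; mere invertibility of $T(\phi)$ is not enough. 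The paper therefore continues all the way to purely imaginary parameters, where both $\phi$ and $\psi$ have bounded factorizations and part (c) of Theorem \ref{t6.2} applies directly. Your intermediate region buys nothing and leaves the key lemma inapplicable, so this step of your argument has a genuine gap.

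Second, your bookkeeping of where the $e_+$ evaluations come from is wrong, though the errors are of the self-cancelling kind. All of the $e_+(\cdot)^{\alpha+\beta}$ factors in the answer come from the single term $E[cd_+,\phi]=E[e_+,\phi]$ via $E[C,u_{\tau,\gamma}]=C_+(\tau)^{\gamma}$. There are no ``residual $e_+$-evaluations'' from the $\psi$-factors: since $[\log d_+]_{-k}=0$ for $k\ge1$ one has $E[\psi,d_+]=1$ and $E[d_+\iv,\psi\iv]=E[d_+,\psi]$, so
$$
\frac{E[\psi,cd_+]}{E[cd_+,\psi]}\cdot E[d_+\iv,\psi\iv]=\frac{E[\psi,c_-]}{E[c_+,\psi]},
$$
which produces exactly the $c_0(\cdot)^{-\alpha}$ evaluations and nothing else (this cancellation is precisely the content of part (c)). Likewise there is no ``pointwise correction produced by the jumps of $\phi$'' in the last determinant: after your (correct) cyclic reduction it equals $\det(I+H(e_+)T\iv(e_+\phi))$, which Lemma \ref{l6.1} evaluates to exactly $F[e_+]$ once $\phi$ has a bounded factorization. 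If you carried out the accumulation you describe, you would either discover these extra contributions are trivial or introduce spurious factors; as written the proposal does not actually establish the stated formula.
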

\begin{proof}
The assumptions imply that $T(\psi)$ and $M(1,\phi)$ are invertible. Hence by the previous theorem, part (a), 
the operator determinant $E_1$ is well-defined. It also depends analytically on the parameters
$\alpha^\pm, \beta^\pm, \alpha_r^\pm, \beta_r$.
Therefore it is sufficient to prove the identity under the assumption that the real parts of all parameters vanish. This implies that 
$T(\phi)$ is invertible and hence part (c) of the previous theorem can be applied.  
Moreover, $F[cd_+]=F[c_+d_+]$ as can be easily seen from \eqref{F[c]}.

To carry out the computation of the various $E[\cdot,\cdot]$ terms, we observe that for functions $C$ admitting a factorization in $B^1_1$ (see \eqref{f.c+-}) 
the following general formulas were established in  \cite[Sect.~10.62]{BS}),
$$
E[u_{\tau,\beta},C]=C_-(\tau)^{-\beta},\qquad
E[C,u_{\tau,\beta}]=C_+(\tau)^{\beta}.
$$
Recalling the definition of $\psi$ and $\phi$, 
\begin{align*}
\psi &= u_{1,\alpha^+}\;   u_{-1,\alpha^-}
\prod_{r=1}^R u_{\tau_r,\alpha^+_r}\;  u_{\bar{\tau}_r,\alpha^-_r}\;,
\\
\phi  &=   u_{1,\alpha^++\beta^+} \; u_{-1,\alpha^-+\beta^-}\;
\prod_{r=1}^R u_{\tau_r,\alpha_r+\beta_r}\;   u_{\bar{\tau}_r,\alpha_r+\beta_r}\;,
\end{align*}
we get
\begin{align*}
E[\psi,c_-] &= c_-(1)^{-\alpha^+} c_-(-1)^{-\alpha^-}\prod_{r=1}^R c_-(\tau_r)^{-\alpha_r^+}c_-(\bt)^{-\alpha^-_r},
\\
E[c_+,\psi]\iv &= c_+(1)^{-\alpha^+} c_+(-1)^{-\alpha^-}\prod_{r=1}^R c_+(\tau_r)^{-\alpha_r^+}c_+(\bt)^{-\alpha^-_r},
\\
E[c_+d_+,\phi] &= e_+(1)^{\alpha^++\beta^+}e_+(-1)^{\alpha^-+\beta^-}
\prod_{r=1}^R
e_+(\tau_r)^{\alpha_r^++\beta_r}e_+(\bt)^{\alpha^-_r+\beta_r},
\end{align*}
from which the formula follows.
\end{proof}

We remark that in the special case of $\alpha_r=\alpha_r^\pm$ and $\beta_r=0$, which we are going to consider later, the constant
\begin{align}
\hat{E}_1 &= c_+(1)^{\beta^+}c_+(-1)^{\beta^-} c_-(1)^{-\alpha^+} c_-(-1)^{-\alpha^-}
\prod_{r=1}^R c_-(\tau_r)^{-\alpha_r} c_-(\bt_r)^{-\alpha_r}
\nonumber
\\
&\quad \times
d_+(1)^{\alpha^++\beta^+} d_+(-1)^{\alpha^-+\beta^-}
\prod_{r=1}^R  d_+(\tau_r)^{\alpha_r} d_+(\bt_r)^{\alpha_r}\,.
\label{f.hE1}
\end{align}

Let us now turn to the constant $E$ appearing in Theorem \ref{t5.1}.

\begin{corollary}\label{c6.5}
Let $\psi$ and $\phi$ be given by \eqref{f.psi} and \eqref{f.phi} and assume that the condition \eqref{f.16} and \eqref{f.17} hold. 
Moreover, let $c=c_-G[c]c_+$ and $d=\td_+\iv d_+$ be factorizations in $B_1^1$ (see \eqref{f.c+-}--\eqref{f.dd+}). 
Then the constant $E$ in \eqref{E.const} is well-defined and given by
\begin{eqnarray*}
E= \hat{E}_1\times \left(\frac{c_+(1)d_+(1)}{c_+(-1)d_+(-1)}\right)^{1/2}\times \exp\Big (\sum_{k\ge 1} k [\log c]_k[\log c]_{-k} -\frac{1}{2}\sum_{k\ge 1} k ([\log c]_k+[\log d]_k)^2\Big).
\end{eqnarray*}
where $\hat{E}_1$ is the expression in the previous corollary.
\end{corollary}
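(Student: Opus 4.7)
The plan is to unwind the expression for $E$ from Theorem~\ref{t5.1} into three operator determinants, identify each with a constant from Section~6, and then collect Fourier coefficients. Recall from Theorem~\ref{t5.1} that
$$
E = \det\Bigl(T\iv(\tc\td_+)T(\tc)T(\td_+)\Bigr) \times \det\Bigl(T(cd_+)T(c\iv d_+\iv)\Bigr) \times E_1,
$$
with $E_1$ the determinant computed in the previous corollary. Using cyclicity of the determinant, the first factor is $E[\tc,\td_+]\iv$, and the symmetry $E[A,B]=E[\tilde{B},\tilde{A}]$ rewrites it as $E[d_+,c]\iv$. The second factor is by definition $E[cd_+]$. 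The previous corollary supplies $E_1=\hat{E}_1\cdot F[c_+d_+]$, and since $c=c_-G[c]c_+$ has $[\log c_-]_k=0$ for $k\ge 1$, the Fourier coefficients of $\log(cd_+)$ and $\log(c_+d_+)$ agree at positive indices, so $F[c_+d_+]=F[cd_+]$. Thus
$$
E \;=\; \hat{E}_1 \cdot E[d_+,c]\iv\cdot E[cd_+]\cdot F[cd_+].
$$

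Next I would insert the explicit Fourier-coefficient expressions. Using $[\log d]_k=[\log d_+]_k$ for $k\ge 1$ (since $d=\td_+\iv d_+$ and $G[d]=1$), and using that $[\log(cd_+)]_{-k}=[\log c]_{-k}$ while $[\log(cd_+)]_{k}=[\log c]_k+[\log d]_k$ for $k\ge 1$, the three factors become
\begin{align*}
E[d_+,c]\iv &= \exp\Bigl(-\sum_{k\ge1}k\,[\log d]_k[\log c]_{-k}\Bigr),\\
E[cd_+] &= \exp\Bigl(\sum_{k\ge1}k\bigl([\log c]_k+[\log d]_k\bigr)[\log c]_{-k}\Bigr),\\
F[cd_+] &= \exp\Bigl(-\tfrac{1}{2}\sum_{k\ge1}k\bigl([\log c]_k+[\log d]_k\bigr)^2 + \sum_{k\ge1}\bigl([\log c]_{2k-1}+[\log d]_{2k-1}\bigr)\Bigr).
\end{align*}
When these exponents are added, the $[\log d]_k[\log c]_{-k}$ contributions from the first two cancel, leaving $\sum_{k\ge 1}k\,[\log c]_k[\log c]_{-k}-\tfrac{1}{2}\sum_{k\ge1}k([\log c]_k+[\log d]_k)^2$, which is exactly the quadratic exponent in the statement.

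It remains to identify the linear term. For any function $h\in H_+^\infty\cap B_1^1$ with Fourier expansion $h=\sum_{k\ge 0}h_kt^k$, one has $\sum_{k\ge1}h_{2k-1}=\tfrac{1}{2}(h(1)-h(-1))$. Applying this to $h=\log(c_+d_+)$, whose odd positive Fourier coefficients coincide with those of $\log c$ and $\log d$ combined, we obtain
$$
\sum_{k\ge1}\bigl([\log c]_{2k-1}+[\log d]_{2k-1}\bigr)=\tfrac{1}{2}\log\frac{c_+(1)d_+(1)}{c_+(-1)d_+(-1)},
$$
which exponentiates to the claimed square root factor. Multiplying by $\hat{E}_1$ yields the stated formula.

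The computation is essentially bookkeeping; the only real subtlety is keeping straight which Fourier coefficients come from $\log c$ versus $\log c_+$ versus $\log c_-$ and using the factorization $d=\td_+\iv d_+$ to translate between $[\log d]_{\pm k}$ and $[\log d_+]_k$. No new operator-theoretic input is required beyond Theorems~\ref{t5.1} and~\ref{t6.2} and the preceding corollary.
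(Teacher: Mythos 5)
Your proposal is correct and follows essentially the same route as the paper: identify the first two operator determinants as $E[\tc,\td_+]^{-1}=E[d_+,c]^{-1}$ and $E[cd_+]$, combine with $E_1=\hat{E}_1\,F[e_+]$ from the preceding corollary, and reduce to the quadratic and linear Fourier-coefficient sums. The paper compresses the cancellation you carry out coefficient-by-coefficient into the single algebraic identity $E^{-1}[\tc,\td_+]\cdot E[cd_+]=E[cd_+,cd_+]/E[d_+,c]=E[c_+,c_-]$ via Lemma~\ref{lemma.det}, but the content is identical.
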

\begin{proof}
We have to identify the additional constants,
$$
\det\Big( T\iv(\tc \td_+) T(\tc) T(\td_+)\Big) \times \det \Big( T(cd_+)T(c\iv d_+\iv)\Big)
$$
which are 
$$
E^{-1}[\tilde{c},\tilde{d}_+]\cdot E[cd_+]= \frac{E[cd_+,cd_+]}{E[d_+,c]}=E[c_+,c_-].
$$
This we combine with $F[cd_+]=F[e_+]$ and the previous corollary.
\end{proof}

Putting all this together, we have the following.

\begin{corollary}\label{c6.4}
Let $\psi$ and $\phi$ be given by \eqref{f.psi} and \eqref{f.phi} and assume that the condition \eqref{f.16} and \eqref{f.17} hold. 
Moreover, let $c=c_-G[c]c_+$ and $d=\td_+\iv d_+$ be factorizations in $B_1^1$ (see \eqref{f.c+-}--\eqref{f.dd+}). 
Finally, suppose that (iii) of Theorem \ref{t5.2} holds. Then
$$
\lim_{n\to\infty}
\frac{
\det\Big( P_{n} T\iv(\psi) M(c, c d \phi )  T\iv(\psi^{-1}) P_{n}\Big)}{
G[c]^n  \times  \det \Big( P_{n}\,T^{-1}(\psi) M(1, \phi ) T\iv(\psi^{-1})P_n\Big) }
 = E,
 $$
where $E$ is as in the previous corollary.
\end{corollary}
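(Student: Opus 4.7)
The idea is simply to assemble the separation theorem (Theorem~\ref{t5.1}) with the stability criterion (Theorem~\ref{t5.2}) and the explicit evaluation of the constant from Corollary~\ref{c6.5}. Once the hypotheses of Theorem~\ref{t5.1} are verified, the asymptotic formula and the identification of $E$ are both immediate consequences of results already in hand.

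The first step is to verify the hypotheses of the separation theorem. Condition \eqref{f.16} yields, via Theorem~\ref{t.T.inv}, the invertibility of $T(\psi)$ and $T(\psi\iv)$ on $\ell^2$. The factorization assumptions on $c$ and $d$ in $B^1_1$ match the requirements on the symbols in Theorem~\ref{t5.1}. What remains is the stability of the sequence
$$
A_n = P_n T\iv(\psi) M(1,\phi) T\iv(\psi\iv) P_n.
$$
For this we appeal to Theorem~\ref{t5.2} and check its three conditions: (i)~the invertibility of $M(1,\phi)$ is provided by the theorem preceding Corollary~\ref{c6.4}, whose hypotheses are precisely the inequalities \eqref{f.17}; (ii)~the parity conditions $\Re\gamma^+\notin 2\Z+1/2$ and $\Re\gamma^-\notin 2\Z-1/2$ follow immediately from \eqref{f.17}, since the strict inequalities $-3/2<\Re\gamma^+<1/2$ and $-1/2<\Re\gamma^-<3/2$ exclude the endpoints $\pm 1/2$, $\mp 3/2$; (iii)~is assumed in the statement of the corollary.

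With stability in hand, Theorem~\ref{t5.1} yields
$$
\lim_{n\to\infty}\frac{\det\bigl(P_n T\iv(\psi) M(c,cd\phi) T\iv(\psi\iv) P_n\bigr)}{G[c]^n\,\det\bigl(P_n T\iv(\psi) M(1,\phi) T\iv(\psi\iv) P_n\bigr)} = E,
$$
with $E$ given by the operator-determinant formula \eqref{E.const}. The final step is to invoke Corollary~\ref{c6.5}, which provides precisely the explicit evaluation of this $E$ in terms of $\hat E_1$, the geometric-mean factor, and the Szeg\H{o}-type exponential; this matches the expression in the statement.

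There is no real obstacle beyond the verification of the stability hypothesis, since all heavy lifting---both the separation identity and the determinant evaluation---has already been carried out in Theorems~\ref{t5.1}--\ref{t5.2} and Corollary~\ref{c6.5}. The only subtle point is ensuring that the parameter ranges in \eqref{f.17} are chosen exactly so that condition (ii) of Theorem~\ref{t5.2} is automatic and the invertibility of $M(1,\phi)$ in condition (i) is guaranteed; this is why the conditions were imposed with strict inequalities at the endpoints.
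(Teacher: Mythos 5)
Your proof is correct and follows exactly the route the paper intends (the paper offers no explicit proof, only the remark ``Putting all this together''): verify stability of $A_n$ via Theorem \ref{t5.2} --- with (i) supplied by the invertibility theorem for $I+H(\phi)$ under \eqref{f.17}, (ii) automatic since the open intervals in \eqref{f.17} exclude the relevant points of $2\Z\pm 1/2$, and (iii) assumed --- then apply Theorem \ref{t5.1} and identify $E$ via Corollary \ref{c6.5}. The only cosmetic quibble is that the invertibility of $M(1,\phi)$ comes from the unnumbered theorem at the end of Section 5, not from a result immediately preceding Corollary \ref{c6.4}.
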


\section{Known asymptotics}

In the previous separation theorem and the constant computation we have reduced the asymptotics of
$$
\det\Big( P_{n} T\iv(\psi) M(c, c d \phi )  T\iv(\psi^{-1}) P_{n}\Big)
$$
to the asymptotics of 
$$
\det \Big( P_{n}\,T^{-1}(\psi) M(1, \phi ) T\iv(\psi^{-1})P_n\Big) .
$$
The separation theorem is of course only as useful as  far as we are able to obtain this last asymptotics. We can reverse the considerations of Section 
\ref{s3} and Theorem \ref{t4.1} and obtain 
$$
\det \Big( P_{n}\,T^{-1}(\psi) M(1, \phi ) T\iv(\psi^{-1})P_n\Big) = \det M_n(a_0, b_0)
$$
where 
\begin{align}
a_0 &= v_{1,\alpha^+}\;   v_{-1,\alpha^-}\,
\prod_{r=1}^R v_{\tau_r,\alpha^+_r}\;  v_{\bar{\tau}_r,\alpha^-_r}\;,
\\
b_0 &= v_{1,\alpha^+}\;  u_{1,\beta^+} \; v_{-1,\alpha^-}\; u_{-1,\beta^-}\;
\prod_{r=1}^R v_{\tau_r,\alpha_r}u_{\tau_r,\beta_r}\;  v_{\bar{\tau}_r,\alpha_r} u_{\bar{\tau}_r,\beta_r}\,.
\end{align}
These are the original functions $a$ and $b$ without the $c$ and $d$ terms.

The asymptotics of $\det M_n(a_0,b_0)$ are known in the cases of $\beta_r=0$ and $\alpha_r=\alpha_r^+=\alpha_r^-$ and $\beta^+\in\{0,-1\}$, $\beta^-\in\{0,1\}$.
We remark that in these cases $a_0$ is an even function which is a product of pure Fisher-Hartwig type functions with zeros/poles only (no jumps).
Furthermore, depending on the values of $\beta^\pm$, we have four cases,
$$
b_0(t)=\pm a_0(t),\qquad b_0(t)= t a_0(t),\qquad b_0(t)=-t^{-1} a_0(t),
$$
which are precisely the cases (i)--(iv) described in the introduction. We list them here and note that $G(1 +z)$ is the Barnes $G$-function, an entire function satisfying 
$G( 1 + z) = \Gamma(z) G(z)$ (see \cite{Bar}).

The asymptotics of 
$\det M_n(a_0, b_0)$ are given by

\begin{enumerate}
\item [(1)] $b_0(t)= a_0(t),\,\, \beta_r=0$ and $\alpha_r=\alpha_r^+=\alpha_r^-$, and $\beta^+ = 0$, $\beta^- = 0.$ 
\[  n^{\{\frac{1}{2} ((\al^{+})^{2} + (\al^{-})^{2} - \al^{+}+ \al^{-})  + \sum \alpha_{r}^{2} \}}\,\,2^{-\frac{1}{2} (\al^{+} + \al^{-})^{2} +\frac{1}{2}(\al^{+} + \al^{-}) +\sum \al_{r}^{2}}\]
\[ \times  \prod _{r} | 1 - \tau_{r}^{2}|^{-\al_{r}^{2}} | 1 - \tau_{r}|^{-2\al_{r}(\al^{+} -1/2)} | 1 +\tau_{r}|^{-2\al_{r} (\al^{-} +1/2)}\]
 \[ \times \prod_{ j <k} | \tau_{k} - \tau_{j} |^{-2 \al_{k}\al_{j}} \, | \tau_{k} - 1/\tau_{j} |^{-2 \al_{k}\al_{j}}\]
 \[ \times \frac{ \pi ^{\frac{1}{2} (\al^{+} + \al ^{-})} G(1/2)G(3/2)}{G(1/2 + \al^{+})G( 3/2 + \al^{-})}
\prod_{r} \frac{ G( 1 + \al_{r})^{2}}{G( 1 + 2 \al_{r})}\]

\item [(2)] $b_0(t)= -a_0(t),\,\,\beta_r=0$ and $\alpha_r=\alpha_r^+=\alpha_r^-$, and $\beta^+ = -1$, $\beta^- = 1.$ 

\[  n^{\{\frac{1}{2} ((\al^{+})^{2} + (\al^{-})^{2} + \al^{+}- \al^{-})  + \sum \alpha_{r}^{2} \}}\,
 2^{-\frac{1}{2} (\al^{+} + \al^{-})^{2} +\frac{1}{2}(\al^{+} + \al^{-}) +\sum \al_{r}^{2}}\]
 \[ \times \prod _{r} | 1 - \tau_{r}^{2}|^{-\al_{r}^{2}} | 1 - \tau_{r}|^{-2\al_{r} (\al^{+} +1/2)} | 1 +\tau_{r}|^{-2\al_{r} (\al^{-} -1/2)}\]
 \[\times  \prod_{ j <k} | \tau_{k} - \tau_{j} |^{-2 \al_{k}\al_{j}} \, | \tau_{k} - 1/\tau_{j} |^{-2 \al_{k}\al_{j}}\]
 \[ \times  \frac{ \pi ^{\frac{1}{2} (\al^{+} + \al ^{-})} G(3/2)G(1/2)}{G(3/2 + \al^{+})G( 1/2 + \al^{-})}
\prod_{r} \frac{ G( 1 + \al_{r})^{2}}{G( 1 + 2 \al_{r})}\]

\item[(3)] $b_0(t)= t a_0(t),\,\,\beta_r=0$ and $\alpha_r=\alpha_r^+=\alpha_r^-$, and $\beta^+ = 0$, $\beta^- = 1.$

\[  n^{\{\frac{1}{2} ((\al^{+})^{2} + (\al^{-})^{2} - \al^{+}- \al^{-})  + \sum \alpha_{r}^{2} \}}\,
 2^{\{2 -\frac{1}{2} (\al^{+} + \al^{-} -1)^{2} +\frac{1}{2}(\al^{+} + \al^{-} -1) +\sum \al_{r}^{2} \}}\]
 \[ \times \prod _{r} | 1 - \tau_{r}^{2}|^{-\al_{r}^{2}} | 1 - \tau_{r}|^{-2\al_{r} (\al^{+} -1/2)} | 1 +\tau_{r}|^{-2\al_{r}(\al^{-} -1/2)}\]
 \[\times  \prod_{ j <k} | \tau_{k} - \tau_{j} |^{-2 \al_{k}\al_{j}} \, | \tau_{k} - 1/\tau_{j} |^{-2 \al_{k}\al_{j}} \]
 \[ \times \frac{ \pi ^{\frac{1}{2} (\al^{+} + \al ^{-})} G(1/2)^{2}}{G(1/2 + \al^{+})G( 1/2 + \al^{-})}
\prod_{r} \frac{ G( 1 + \al_{r})^{2}}{G( 1 + 2 \al_{r})}\]

\item [(4)] $b_0(t)= -t^{-1}a_0(t),\,\,\beta_r=0$ and $\alpha_r=\alpha_r^+=\alpha_r^-$, and $\beta^+ = -1$, $\beta^- = 0.$

\[  n^{\{\frac{1}{2} ((\al^{+})^{2} + (\al^{-})^{2} + \al^{+} +\al^{-})  + \sum \alpha_{r}^{2} \}}\,
 2^{\{ -\frac{1}{2} (\al^{+} + \al^{-} +1)^{2} +\frac{1}{2}(\al^{+} + \al^{-} +1) +\sum \al_{r}^{2} \}}\]
 \[ \times \prod _{r} | 1 - \tau_{r}^{2}|^{-\al_{r}^{2}} | 1 - \tau_{r}|^{-2\al_{r} (\al^{+} +1/2)} | 1 +\tau_{r}|^{-2\al_{r}(\al^{-} +1/2)}\]
 \[\times  \prod_{ j <k} | \tau_{k} - \tau_{j} |^{-2 \al_{k}\al_{j}} \, | \tau_{k} - 1/\tau_{j} |^{-2 \al_{k}\al_{j}} \]
\[ \times \frac{ \pi ^{\frac{1}{2} (\al^{+} + \al ^{-})} G(3/2)^{2}}{G(3/2 + \al^{+})G( 3/2 + \al^{-})}
\prod_{r} \frac{ G( 1 + \al_{r})^{2}}{G( 1 + 2 \al_{r})}\]

\end{enumerate}

The above asymptotics have been proved in \cite[Theorem 1.25]{DIK}. In the special case of $\alpha^+=\alpha^-=0$ and case (1), see also \cite{BE2}.
On the other hand, if all $\alpha_r=0$, then the determinants  can be evaluated explicitely because they are related to Hankel determinants constructed from the moments of classical Jacobi orthogonal polynomials. In this later case the Hankel determinant asymptotics can also be described as Toeplitz determinants \cite{BE2}.


\section{Final computations for the four important cases}

Using the previous section and then our computations for the constant $E$ we have the final four answers. We assume in what follows that 
we have the factorizations $c=c_-G[c] c_+$ and $d=d_+\td_+\iv$  (see \eqref{f.c+-}--\eqref{f.dd+}) and that condition \eqref{f.16x} holds.
%
%
Then the asymptotics of $\det M_n(a, b)$ are given by the following products.

\begin{enumerate}

\item[(1)]  $b(t)= a(t),\,\,\beta_r=0$ and $\alpha_r=\alpha_r^+=\alpha_r^-$, and $\beta^+ = 0$, $\beta^- = 0.$

\[  G[c]^n\,n^{\{\frac{1}{2} ((\al^{+})^{2} + (\al^{-})^{2} - \al^{+}+ \al^{-})  + \sum \alpha_{r}^{2} \}}\, 2^{-\frac{1}{2} (\al^{+} + \al^{-})^{2} +\frac{1}{2}(\al^{+} + \al^{-}) +\sum \al_{r}^{2}}\]
\[   
\times \exp\left(\sum_{k\ge 1} \Big( k [\log c]_k[\log c]_{-k} -\frac{1}{2} k ([\log c]_k+[\log d]_k)^2\Big)\right)
\]
\[\times \, c_+(1)^{1/2} c_+(-1)^{-1/2} c_{-}(1)^{-\al^{+}} c_{-}(-1)^{-\al^{-}} \,\,\prod_{r} c_{-}(\tau_{r})^{-\al_{r}} c_{-}(\bar{\tau}_{r})^{-\al_{r}} \]
\[\times \, d_{+}(1)^{\al^{+}+1/2} d_{+}(-1)^{\al^{-}-1/2} \,\,\prod_{r} d_{+}(\tau_{r})^{\al_{r}} d_{+}(\bar{\tau}_{r})^{\al_{r}} \]
\[ \times \, \prod_{ j <k} | \tau_{k} - \tau_{j} |^{-2 \al_{k}\al_{j}} \, | \tau_{k} - 1/\tau_{j} |^{-2 \al_{k}\al_{j}} \]
 \[ \times \, \prod _{r} | 1 - \tau_{r}^{2}|^{-\al_{r}^{2}} | 1 - \tau_{r}|^{-2\al_{r} (\al^{+} -1/2)} | 1 +\tau_{r}|^{-2\al_{r} (\al^{-} +1/2)}\]
 \[ \times  \, \frac{ \pi ^{\frac{1}{2} (\al^{+} + \al ^{-})} G(1/2)G(3/2)}{G(1/2 + \al^{+})G( 3/2 + \al^{-})}
\prod_{r} \frac{ G( 1 + \al_{r})^{2}}{G( 1 + 2 \al_{r})}\]

\item[(2)] $b(t)= -a(t),\,\,\beta_r=0$ and $\alpha_r=\alpha_r^+=\alpha_r^-$, and $\beta^+ = -1$, $\beta^- = 1.$

\[  G[c]^n\,n^{\{\frac{1}{2} ((\al^{+})^{2} + (\al^{-})^{2} + \al^{+}- \al^{-})  + \sum \alpha_{r}^{2} \}}\,
 2^{-\frac{1}{2} (\al^{+} + \al^{-})^{2} +\frac{1}{2}(\al^{+} + \al^{-}) +\sum \al_{r}^{2}}\]
\[  
\times \exp\left(\sum_{k\ge 1} \Big( k [\log c]_k[\log c]_{-k} -\frac{1}{2} k ([\log c]_k+[\log d]_k)^2\Big)\right)
\]
 \[ \times \, c_+(1)^{-1/2} c_+(-1)^{1/2}c_{-}(1)^{-\al^{+}} c_{-}(-1)^{-\al^{-}} \,\,\prod_{r} c_{-}(\tau_{r})^{-\al_{r}} c_{-}(\bar{\tau}_{r})^{-\al_{r}} \]
 \[\times \, d_{+}(1)^{\al^{+}-1/2} d_{+}(-1)^{\al^{-}+1/2} \,\,\prod_{r} d_{+}(\tau_{r})^{\al_{r}} d_{+}(\bar{\tau}_{r})^{\al_{r}} \]
 \[ \times \prod _{r} | 1 - \tau_{r}^{2}|^{-\al_{r}^{2}} | 1 - \tau_{r}|^{-2\al_{r} (\al^{+} +1/2)} | 1 +\tau_{r}|^{-2\al_{r} (\al^{-} -1/2)}\]
 \[\times  \prod_{ j <k} | \tau_{k} - \tau_{j} |^{-2 \al_{k}\al_{j}} \, | \tau_{k} - 1/\tau_{j} |^{-2 \al_{k}\al_{j}} \]
 \[ \times \frac{ \pi ^{\frac{1}{2} (\al^{+} + \al ^{-})} G(3/2)G(1/2)}{G(3/2 + \al^{+})G( 1/2 + \al^{-})}
\prod_{r} \frac{ G( 1 + \al_{r})^{2}}{G( 1 + 2 \al_{r})}\]

\item[(3)] $b(t)= ta(t),\,\,\beta_r=0$ and $\alpha_r=\alpha_r^+=\alpha_r^-$, and $\beta^+ = 0$, $\beta^- = 1.$

\[  G[c]^n\,n^{\{\frac{1}{2} ((\al^{+})^{2} + (\al^{-})^{2} - \al^{+}- \al^{-})  + \sum \alpha_{r}^{2} \}}\,
 2^{\{2 -\frac{1}{2} (\al^{+} + \al^{-} -1)^{2} +\frac{1}{2}(\al^{+} + \al^{-} -1) +\sum \al_{r}^{2} \}}\]
 \[  
\times \exp\left(\sum_{k\ge 1} \Big( k [\log c]_k[\log c]_{-k} -\frac{1}{2} k ([\log c]_k+[\log d]_k)^2\Big)\right)
\] 
  \[ \times  c_{+}(1)^{1/2} c_{+}(-1)^{1/2}  \, c_{-}(1)^{-\al^{+}} c_{-}(-1)^{-\al^{-}} \,\,\prod_{r} c_{-}(\tau_{r})^{-\al_{r}} c_{-}(\bar{\tau}_{r})^{-\al_{r}} \]
  \[\times \, d_{+}(1)^{\al^{+}+1/2} d_{+}(-1)^{\al^{-}+1/2} \,\,\prod_{r} d_{+}(\tau_{r})^{\al_{r}} d_{+}(\bar{\tau}_{r})^{\al_{r}} \]
 \[ \times \prod _{r} | 1 - \tau_{r}^{2}|^{-\al_{r}^{2}} | 1 - \tau_{r}|^{-2\al_{r} (\al^{+} -1/2)} | 1 +\tau_{r}|^{-2\al_{r}(\al^{-} -1/2)}\]
 \[\times  \prod_{ j <k} | \tau_{k} - \tau_{j} |^{-2 \al_{k}\al_{j}} \, | \tau_{k} - 1/\tau_{j} |^{-2 \al_{k}\al_{j}} \]
 \[ \times \frac{ \pi ^{\frac{1}{2} (\al^{+} + \al ^{-})} G(1/2)^{2}}{G(1/2 + \al^{+})G( 1/2 + \al^{-})}
\prod_{r} \frac{ G( 1 + \al_{r})^{2}}{G( 1 + 2 \al_{r})}\]

\item [(4)] $b(t)= -t^{-1}a(t),\,\,\beta_r=0$ and $\alpha_r=\alpha_r^+=\alpha_r^-$, and $\beta^+ = -1$, $\beta^- = 0.$

\[ G[c]^n\, n^{\{\frac{1}{2} ((\al^{+})^{2} + (\al^{-})^{2} + \al^{+} +\al^{-})  + \sum \alpha_{r}^{2} \}}\,
 2^{\{ -\frac{1}{2} (\al^{+} + \al^{-} +1)^{2} +\frac{1}{2}(\al^{+} + \al^{-} +1) +\sum \al_{r}^{2} \}}\]
   \[  
\times \exp\left(\sum_{k\ge 1} \Big( k [\log c]_k[\log c]_{-k} -\frac{1}{2} k ([\log c]_k+[\log d]_k)^2\Big)\right)
\] 
  \[ \times c_{+}(1)^{-1/2} c_{+}(-1)^{-1/2}  \, c_{-}(1)^{-\al^{+}} c_{-}(-1)^{-\al^{-}} \,\,\prod_{r} c_{-}(\tau_{r})^{-\al_{r}} c_{-}(\bar{\tau}_{r})^{-\al_{r}} \]
  \[\times \, d_{+}(1)^{\alpha^+-1/2} d_{+}(-1)^{\al^--1/2} \,\,\prod_{r} d_{+}(\tau_{r})^{\al_{r}} d_{+}(\bar{\tau}_{r})^{\al_{r}} \]
 \[ \times \prod _{r} | 1 - \tau_{r}^{2}|^{-\al_{r}^{2}} | 1 - \tau_{r}|^{-2\al_{r} (\al^{+} +1/2)} | 1 +\tau_{r}|^{-2\al_{r}(\al^{-} +1/2)}\]
 \[\times  \prod_{ j <k} | \tau_{k} - \tau_{j} |^{-2 \al_{k}\al_{j}} \, | \tau_{k} - 1/\tau_{j} |^{-2 \al_{k}\al_{j}} \]
\[ \times \frac{ \pi ^{\frac{1}{2} (\al^{+} + \al ^{-})} G(3/2)^{2}}{G(3/2 + \al^{+})G( 3/2 + \al^{-})}
\prod_{r} \frac{ G( 1 + \al_{r})^{2}}{G( 1 + 2 \al_{r})}\]

\end{enumerate}

These formulas follow in a straightforward manner from simply using the known results quoted in the previous section, along with our separation theorem formulas for the constant and the evaluation of those for the specific values of $\beta^{+}$ and $\beta^{-}$. More specifically, use \eqref{f.hE1} and Corollary \ref{c6.5}.


\section{The inverse of $I + H(\psi)$}
\label{sec:9}

In the course of the computations in the previous sections, we discovered that when $\psi^{-1} = \tilde{\psi},$ explicit forms of the inverse of $I + H(\phi_0\psi)$ can be found in the four cases
$\phi_0=\pm1$, $\phi_0(z)=z$, $\phi_0(z)=-z\iv$ considered in the previously.  The inverse will be expressed in terms of the inverse of  $T(\psi)$ and $T(\psi\iv)$.

The first two cases are rather simple.
\begin{proposition} 
Suppose that $\psi^{-1} = \tilde{\psi}$ and that the operator $T(\psi)$ is invertible. Then $I\pm H(\psi)$ is invertible and
$$
(I\pm H(\psi))\iv = 
T(\psi^{-1})^{-1} (I\pm H(\psi\iv))T(\psi)^{-1}\,.
$$
\end{proposition}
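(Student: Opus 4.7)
The plan is to show that conjugation by $T(\psi\iv)$ turns $I\pm H(\psi)$ into $I\mp H(\psi\iv)$, invert this intertwining, and then clean up using the elementary factorization $(I\mp H(\psi\iv))(I\pm H(\psi\iv))=T(\psi\iv)T(\psi)$ which follows from $\psi\iv=\tilde\psi$.

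First I would set up the two key identities. Applying \eqref{H1} with $a=\psi\iv$ and $b=\psi$, and using $\tilde b=\tilde\psi=\psi\iv$ together with $H(\psi\iv\psi)=H(1)=0$, gives the conjugation identity
\[
T(\psi\iv)H(\psi)=-H(\psi\iv)T(\psi\iv),\qquad\text{hence}\qquad T(\psi\iv)\bigl(I\pm H(\psi)\bigr)=\bigl(I\mp H(\psi\iv)\bigr)T(\psi\iv).
\]
Next, applying \eqref{T1} with $a=\psi\iv$, $b=\psi$ gives $T(\psi\iv)T(\psi)+H(\psi\iv)^{2}=I$, which rewrites as
\[
(I-H(\psi\iv))(I+H(\psi\iv))=(I+H(\psi\iv))(I-H(\psi\iv))=T(\psi\iv)T(\psi).
\]

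Second, I would check that all the inverses appearing in the claimed formula exist. Since $T(\psi)$ is invertible by hypothesis and $T(\psi\iv)=T(\tilde\psi)$ is its transpose, $T(\psi\iv)$ is invertible as well (as in Proposition \ref{p1.2}). Consequently the right-hand side $T(\psi\iv)T(\psi)$ of the factorization is invertible, so the two factors $I\pm H(\psi\iv)$ each have a two-sided bounded inverse; by symmetry (or by repeating the argument with the roles of $\psi$ and $\psi\iv$ swapped), $I\pm H(\psi)$ is invertible too.

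Third, I would assemble the formula. From the conjugation identity,
\[
(I\pm H(\psi))\iv = T(\psi\iv)\iv(I\mp H(\psi\iv))\iv T(\psi\iv),
\]
and from the factorization,
\[
(I\mp H(\psi\iv))\iv=(I\pm H(\psi\iv))T(\psi)\iv T(\psi\iv)\iv.
\]
Substituting and canceling the middle $T(\psi\iv)\iv T(\psi\iv)=I$ yields the advertised identity
\[
(I\pm H(\psi))\iv=T(\psi\iv)\iv(I\pm H(\psi\iv))T(\psi)\iv.
\]

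The only real content is the conjugation identity in the first paragraph; once that is in hand, everything else is routine bookkeeping, and I do not expect any obstacle beyond being careful about which operators are known to be invertible at each stage.
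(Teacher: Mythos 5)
Your proposal is correct and rests on the same two ingredients as the paper's proof: the identity $T(\psi^{\pm1})T(\psi^{\mp1})=I-H(\psi^{\pm1})^2$ coming from \eqref{T1} and the anticommutation relation $T(\psi^{\mp1})H(\psi^{\pm1})=-H(\psi^{\mp1})T(\psi^{\mp1})$ coming from \eqref{H1}, merely organized in mirror image (you conjugate by $T(\psi^{-1})$ where the paper conjugates by $T(\psi)$). Your explicit verification that each factor $I\pm H(\psi^{\pm1})$ is invertible is a welcome bit of extra care, but the argument is essentially the paper's.
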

\begin{proof}
From $\psi^{-1} = \tilde{\psi}$ and thus 
$ T(\psi)T(\psi^{-1}) = I - H(\psi)^{2}$,
we know that 
\[ (T(\psi)T(\psi^{-1}) )^{-1}= T(\psi^{-1})^{-1} T(\psi)^{-1} = (I - H(\psi)^{2})^{-1}.\]
This means that 
\[ T(\psi^{-1})^{-1} (I\pm H(\psi\iv)) T(\psi)^{-1} = (I - H(\psi)^{2})^{-1}T(\psi) \,(I\pm H(\psi\iv)) \,T(\psi)^{-1}. \]
Now from (\ref{H1}) we know that $T(\psi)H(\psi^{-1}) = - H(\psi) T(\psi).$ So the term
\[ T(\psi) \, (I \pm H( \psi^{-1}) \,T(\psi)^{-1} = I \mp H(  \psi) \]
and this yields the result.
\end{proof}

In order to continue with the other two case, we need to make a connection with factorization.
If $T(\psi)$ is invertible then there exists a Wiener-Hopf factorziation of $\psi$ 
such that $\psi = \psi_{-} \psi_{+} $ and $\psi_{\pm}$ and $\psi_{\pm}^{-1}$ are all  in $L^{2}$.
If in addition $\psi\iv=\tps$, then using the uniqueness of the Wiener-Hopf factorization (up to multiplicative constants)
it is not difficult to see that the factors are related to each other either by
$$
\psi_+=\tps_-\iv
\quad\mbox{ or }\quad
\psi_+=-\tps_-\iv\,.
$$
This means that we have either 
$$
\psi=\tps_+\iv \psi_+\quad \mbox{ or }\quad  \psi=-\tps_+\iv \psi_+\,.
$$
We can equivalently express this dichotomy as
$$
P_1T\iv(\psi) P_1=1
\quad\mbox{ or }\quad
P_1T\iv(\psi) P_1=-1
$$
by using that $T\iv(\psi)=T(\psi_+\iv) T(\psi_-\iv)$.
We need one basic result before we proceed.

\begin{lemma}\label{lemma.sum}
If $\psi^{-1} = \tilde{\psi}$, if the Toeplitz operator $T(\psi)$ is invertible and if $P_1T \iv(\psi) P_1=1$, then
$$ (H(\tilde{\psi}) + T(\psi)^{-1} H(\psi) - H(\tilde{\psi} /z) T(\psi)^{-1}H(\psi) )H(z) = 0.$$
\end{lemma}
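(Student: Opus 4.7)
The plan is to exploit the rank-one structure $H(z) = e_0 \otimes e_0^{\ast}$ (immediate from $(z)_k = \delta_{k,1}$), which reduces the stated identity to showing $X e_0 = 0$, where $X := H(\tps) + T(\psi)\iv H(\psi) - H(\tps/z) T(\psi)\iv H(\psi)$. Setting $\zeta := T(\psi)\iv e_0$, I would first replace $H(\tps/z)$ by $H(\tps) T(z)$ (using $H(z\iv) = 0$ in \eqref{H1}) and $T(\psi)\iv H(\psi)$ by $-H(\tps) T(\psi)\iv$ (obtained from $H(\psi\tps) = H(1) = 0$ via \eqref{H1}). A short calculation then turns $X e_0$ into $H(\tps)\bigl[e_0 - \zeta + T(z) H(\tps)\zeta\bigr]$.

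Next, a further use of \eqref{H1} gives $T(z) H(\tps) = H(z\tps) - H(z) T(\psi)$; combined with $T(\psi)\zeta = e_0$ and $H(z) e_0 = e_0$, this collapses the bracket to $-(I - H(z\tps))\zeta$, so $X e_0 = -H(\tps)\zeta + H(\tps) H(z\tps)\zeta$. Applying \eqref{T1} with $\psi\tps = 1$ yields $H(\tps) H(z\tps) = T(z\iv) - T(\tps) T(\psi/z)$; and since $T(\psi/z) = T(z\iv) T(\psi)$ (again because $H(z\iv) = 0$) annihilates $\zeta$ via $T(z\iv) e_0 = 0$, the whole problem reduces to the single vector identity
$$
H(\tps)\,\zeta \;=\; T(z\iv)\,\zeta.
$$

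For this remaining identity I would invoke the Wiener--Hopf factorization $\psi = \psi_- \psi_+$ afforded by the invertibility of $T(\psi)$. The relation $\psi\iv = \tps$, together with uniqueness of Wiener--Hopf factorizations up to a multiplicative constant, forces a scalar $C$ with $\tps_+ = C\iv \psi_-\iv$ and $\tps_- = C \psi_+\iv$; evaluation at $t = \iy$ gives $C = ([\psi_+]_0 [\psi_-]_0)\iv = P_1 T\iv(\psi) P_1$, so the hypothesis $P_1 T\iv(\psi) P_1 = 1$ is precisely the statement $C = 1$. A short commutation based on \eqref{T1}--\eqref{H1} together with the vanishing of $H(\psi_-) = H(\psi_-\iv) = 0$ then yields $H(\tps) T(\psi_+\iv) = C \cdot H(\psi_+\iv) = H(\psi_+\iv)$ and $T(z\iv) T(\psi_+\iv) = T(\psi_+\iv/z)$. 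Since $T(\psi_-\iv) e_0 = [\psi_-\iv]_0 e_0$ gives $\zeta = [\psi_-\iv]_0\,T(\psi_+\iv) e_0$, both $H(\tps)\zeta$ and $T(z\iv)\zeta$ collapse to the same sequence $[\psi_-\iv]_0 \cdot ([\psi_+\iv]_1, [\psi_+\iv]_2, \dots)$, completing the argument.

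The main obstacle I expect is the careful bookkeeping around the Wiener--Hopf constant $C$: the commutation in the last paragraph always produces $H(\tps)\zeta = C \cdot T(z\iv)\zeta$, so the identity genuinely fails for $C \ne 1$. The hypothesis $P_1 T\iv(\psi) P_1 = 1$ is exactly what pins $C$ to $1$, and the alternative $C = -1$ explains the dichotomy $P_1 T\iv(\psi) P_1 = -1$ noted immediately before the lemma.
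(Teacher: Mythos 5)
Your proof is correct and follows essentially the same route as the paper's: both exploit the rank-one structure of $H(z)$ to reduce the claim to the action on the constant sequence $e_0$, and both then settle the resulting identity via the Wiener--Hopf factorization $\psi=\psi_-\psi_+$ with $\widetilde{\psi_-}=\psi_+^{-1}$ (which is exactly your $C=1$). Your intermediate bookkeeping is more operator-algebraic where the paper computes the three terms as explicit functions, and you make the role of the normalization $P_1T\iv(\psi)P_1=1$ more visible, but the underlying argument is the same.
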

\begin{proof}
If $T(\psi)$ is invertible then there exists a factorization of $\psi$ such that 
$\psi = \psi_{-} \psi_{+} $ where $\psi_{-} = \tilde{\psi_{+}}^{-1}$ and $\psi_{\pm}$ and $\psi_{\pm}^{-1}$ are all  in $L^{2}.$
The operator $H(z)$ is the rank one projection $P_{1},$ so the above will be zero if the operator above applied to the constant function $f(z) = 1$ is zero. 
The operator $T(\psi)^{-1}H(\psi) $ is the same as the operator $T(\psi_{+}^{-1})H(\psi_{+})$ since it agrees with this operator on all trigonometric polynomials. Hence  $$T(\psi)^{-1}H(\psi) \,f = T(\psi_{+}^{-1})H(\psi_{+}) \,f=  (\psi_{+}^{-1}( \psi_{+} - (\psi_{+})_{0})/z) = (1 - \psi_{+}^{-1}(\psi_{+})_{0})/z.$$ We also have $H(\tilde{\psi})\,f = P(\tilde{\psi} /z).$ Finally  $$H(\tilde{\psi} /z)((1 - \psi_{+}^{-1}(\psi_{+})_{0})/z )= P(\tilde{\psi}/z) - (1-\psi_{+}^{-1}(\psi_{+})_{0}^{-1})/z.$$  Putting the three terms together yields the desired result.
\end{proof}

\begin{theorem}
Suppose that $\psi^{-1} = \tilde{\psi}$, that the operator $T(\psi)$ is invertible, and that $P_1T \iv(\psi) P_1=1$.
Then $ I - H(z\iv \psi)$ is also invertible
and its inverse is given by
\[ T(\psi^{-1})^{-1} (I-H(z\iv \psi^{-1})) T(\psi)^{-1}\,.\]
\end{theorem}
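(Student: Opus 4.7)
The plan is to verify directly that $B := T(\psi^{-1})^{-1}(I - H(z\iv\psi^{-1}))T(\psi)^{-1}$ is a two-sided inverse of $I-H(z\iv\psi)$, following the blueprint of the previous proposition. The new feature is that the extra factor $z\iv$ breaks the perfect symmetry that made the $\phi_0=\pm 1$ cases painless, and rank-one correction terms appear at several intermediate steps. The content of the theorem is that these corrections all cancel, and the cancellation is precisely what Lemma \ref{lemma.sum} (which encodes the hypothesis $P_1T\iv(\psi)P_1=1$) is designed to provide.

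The first steps are algebraic preparation. From $T(\psi)T(\psi^{-1}) = I - H(\psi)^2$ (using $\widetilde{\psi^{-1}}=\psi$) rewrite $B = (I-H(\psi)^2)\iv T(\psi)(I-H(z\iv\psi^{-1}))T(\psi)\iv$. Then conjugate: using $H(z\iv\psi^{-1}) = H(\psi^{-1})T(z)$ (since $H(z\iv)=0$), the identity $T(\psi)H(\psi^{-1}) = -H(\psi)T(\psi)$, and the commutator
\[
T(\psi)T(z)-T(z)T(\psi) \;=\; H(z)H(\psi^{-1}) \;=\; P_1 H(\psi^{-1}),
\]
a short computation gives
\[
T(\psi)(I-H(z\iv\psi^{-1}))T(\psi)\iv \;=\; I + H(z\iv\psi) + H(\psi)P_1 H(\psi^{-1})T(\psi)\iv.
\]

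The identity $(I-H(z\iv\psi))B = I$ is thus equivalent to
\[
(I-H(z\iv\psi))\bigl[I + H(z\iv\psi) + H(\psi)P_1 H(\psi^{-1})T(\psi)\iv\bigr] = I - H(\psi)^2.
\]
Expand using $(I-H(z\iv\psi))(I+H(z\iv\psi))=I-H(z\iv\psi)^2$, together with the entrywise identity (verifiable from $H(z\iv\psi)_{j,k}=\psi_{j+k+2}$)
\[
H(\psi)^2 - H(z\iv\psi)^2 \;=\; H(\psi)P_1 H(\psi).
\]
After cancellation this reduces the whole problem to the rank-one equation
\[
H(\psi)P_1 H(\psi) + (I-H(z\iv\psi))H(\psi)P_1 H(\psi^{-1})T(\psi)\iv \;=\; 0. \qquad (\star)
\]

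The final step is to deduce $(\star)$ from Lemma \ref{lemma.sum}. Rearranged, the lemma (with $H(z)=P_1$, $\tilde\psi=\psi^{-1}$) reads
\[
H(\psi^{-1})P_1 + T(\psi)\iv H(\psi)P_1 \;=\; H(z\iv\psi^{-1})T(\psi)\iv H(\psi)P_1,
\]
which we view as a one-column statement recording the hypothesis $P_1T\iv(\psi)P_1=1$. Multiplying on the left by $T(\psi)$ and invoking again $T(\psi)H(\psi^{-1})=-H(\psi)T(\psi)$ together with the formula for the conjugate derived above, the two rank-one operators in $(\star)$ are matched column-for-column and are seen to cancel. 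The identity $B(I-H(z\iv\psi)) = I$ is then obtained by transposing the relations throughout, using $T(\psi)^{T}=T(\tilde\psi)=T(\psi^{-1})$ and $H(\psi)^{T}=H(\psi)$; the transposed hypothesis is equivalent to the original one.

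The main obstacle is the rank-one identity $(\star)$: every other manipulation is a routine application of the fundamental identities \eqref{T1}--\eqref{T2}. This is also exactly the place where the hypothesis $P_1T\iv(\psi)P_1=1$ is used in an essential way. Without it, Lemma \ref{lemma.sum} fails and the rank-one correction in the conjugation step no longer matches the rank-one discrepancy $H(\psi)^2-H(z\iv\psi)^2$, explaining why the "$-1$'' branch requires a separate formula.
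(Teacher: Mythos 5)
Your strategy (conjugate by $T(\psi)$, reduce to a rank-one cancellation, invoke Lemma \ref{lemma.sum}) is the same as the paper's, and your conjugation formula $T(\psi)(I-H(z\iv\psi\iv))T\iv(\psi)=I+H(z\iv\psi)+H(\psi)P_1H(\psi\iv)T\iv(\psi)$ is correct. The error is in the next step. Put $D=I-H(\psi)^2=T(\psi)T(\psi\iv)$, $C=T(\psi)(I-H(z\iv\psi\iv))T\iv(\psi)$ and $K=I-H(z\iv\psi)$, so that $B=D\iv C$. Then $KB=KD\iv C$, and since $K$ does not commute with $D\iv$, the equation $KB=I$ is \emph{not} equivalent to $KC=D$; the reduction that is actually available is $BK=I\Longleftrightarrow CK=D$, i.e.\ $K$ must be multiplied onto $C$ from the \emph{right} (this is what the paper does, proving first that $B$ is a left inverse). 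The distinction is not cosmetic: your identity $(\star)$, which is equivalent to $KC=D$, is false in general. Writing $u=H(\psi)e_0$ and $m=H(\psi)T\iv(\tps)e_0$ (with $e_0=(1,0,0,\dots)$ and $x\otimes y$ the rank-one operator $f\mapsto x\langle y,f\rangle$), one has $H(\psi)P_1H(\psi)=u\otimes u$ and $H(\psi)P_1H(\psi\iv)T\iv(\psi)=-u\otimes m$, whence $KC=D+u\otimes u-(Ku)\otimes m$ while $CK=D+u\otimes u-u\otimes(Km)$. Your $(\star)$ therefore forces $m$ to be proportional to $u$, whereas $CK=D$ only requires $Km=u$. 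For $\psi=\tps_+\iv\psi_+$ with $\psi_+=1+\eps z+\delta z^2$ (which satisfies all hypotheses, including $P_1T\iv(\psi)P_1=1$) one computes $u=(\eps-\eps\delta)e_0+\delta e_1$ and $m=H(\psi_+)e_0=\eps e_0+\delta e_1$, which are not proportional; the $(0,1)$ and $(1,0)$ entries of your $(\star)$ come out to $\eps\delta^2$ and $-\eps\delta^2$. On the other hand $Km=u$ does hold there, and in general it is precisely the transposed form of Lemma \ref{lemma.sum} applied to $\tps$.

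The repair is to run your computation with $K$ on the right: $CK=D$ reduces, after the same cancellations (using $H(z\iv\psi)^2=H(\psi)Q_1H(\psi)$ exactly as you do), to $u\otimes(Km-u)=0$, which Lemma \ref{lemma.sum} delivers. That gives $BK=I$; your closing remark about transposing (using $K^T=K$, $H(\psi)^T=H(\psi)$, $T(\psi)^T=T(\psi\iv)$) is then the correct way to obtain a right inverse and hence invertibility --- a point the paper itself leaves implicit. But as written, the left-multiplied reduction and the identity $(\star)$ cannot be salvaged.
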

\begin{proof}
The inveribility of $T(\psi)$ implies the invertibility of $T(\psi\iv)$ (see the beginning of the proof of Theorem \ref{t5.1}).
This also means that the operator $T(\psi) T(\psi^{-1}) = I - H(\psi)^{2}$ is invertible.

We do the same steps as in the first two cases, except that this time we are dealing with the term
\[  
T(\psi) \, (I - H(\psi^{-1}/z) \,T(\psi)^{-1} = T(\psi) \, (I - H(\tilde{\psi}/z)) \,T(\psi)^{-1}\] in a slightly different manner.
First 
\[ T(\psi) \, (I - H(\tilde{\psi}/z)) \,T(\psi)^{-1} = I + H(\psi) T(z) + H(\psi) H(z) H(\tilde{\psi}) T(\psi)^{-1} .\]
Multiplying on the right by $I - H(\psi/z) $ we have that 
\[ T(\psi) \, (I - H(\tilde{\psi}/z)) \,T(\psi)^{-1} (I - H(\psi/z))  \]
is the same as 
\[ I  + H(\psi) H(z) H(\tilde{\psi}) T(\psi)^{-1} - H(\psi) T(z)\,H(\psi/z) - H(\psi) H(z) H(\tilde{\psi}) T(\psi)^{-1} H(\psi/z) .\]
The above uses the identity $H(\psi/z) = H(\psi)\,T(z) = T(1/z)\,H(\psi).$ Now $$H(\psi) \,T(z)\,H(\psi/z) = H(\psi) \,T(z)\,T(1/z)H(\psi) = 
H(\psi) \,Q_{1}\,H(\psi).$$ This means the above is the same as
\[ I   - H(\psi) H(\psi) + H(\psi) H(z)H(\psi)+ H(\psi) H(z) H(\tilde{\psi}) T(\psi)^{-1}-H(\psi) H(z) H(\tilde{\psi}) T(\psi)^{-1} H(\psi/z) .\]
Thus we will have our result if we can show that the last three terms of this operator sum to zero. 
To do this we consider the transpose of
\[H(\psi)+ H(\tilde{\psi}) T(\psi)^{-1} - H(\tilde{\psi}) T(\psi)^{-1} H(\psi/z),\]
that is,
\[H(\psi)+  T(\tilde{\psi})^{-1}H(\tilde{\psi}) - H(\psi/z)  T(\tilde{\psi})^{-1}H(\tilde{\psi}).\]
Using Lemma \ref{lemma.sum}, with $\psi$ replaced by $\wt{\psi}$ the statement is proved. 
\end{proof}

\begin{theorem}
Suppose that $\psi^{-1} = \tilde{\psi}$, that the operator $T(\psi)$ is invertible, and that $P_1T \iv(\psi) P_1=1$.
Then $ I + H( z\psi)$ is also invertible
and the inverse is given by
\[ (T(\psi^{-1})+H(z))^{-1} (I+H(z\psi^{-1})) (T(\psi)+H(z))^{-1}.\]
\end{theorem}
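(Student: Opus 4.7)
The plan is to verify directly that $N^{-1} = B^{-1} N' A^{-1}$, where $N := I + H(z\psi)$, $N' := I + H(z\psi^{-1})$, $A := T(\psi) + H(z)$ and $B := T(\psi^{-1}) + H(z)$, by establishing the equivalent operator identity $N B^{-1} N' = A$. First I would confirm invertibility: $T(\psi^{-1})$ is invertible by the argument at the beginning of the proof of Theorem~\ref{t5.1}. Using the Wiener-Hopf factorization $\psi = \tilde\psi_+^{-1}\psi_+$ justified by the discussion preceding Lemma~\ref{lemma.sum}, one computes $[T(\psi)^{-1}]_{00} = [T(\psi^{-1})^{-1}]_{00} = (\psi_+)_0 (\psi_+^{-1})_0 = 1$, so by Sherman--Morrison the rank-one perturbations $A = T(\psi) + P_1$ and $B = T(\psi^{-1}) + P_1$ are invertible with denominator $1+1=2$ (here $H(z) = P_1$).

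Using the factorization and \eqref{T1}--\eqref{T2}, a short computation gives
\[
A = T(\tilde\psi_+^{-1})(I + P_1) T(\psi_+), \qquad B^{-1} = T(\psi_+)\bigl(I - \tfrac{1}{2}P_1\bigr) T(\tilde\psi_+^{-1}).
\]
The simplifications $\tilde\psi_+ \,\psi = \psi_+$ and $\tilde\psi_+^{-1}\,\psi^{-1} = \psi_+^{-1}$ yield $(I + H(z\psi)) T(\psi_+) = T(\psi_+) + H(z\psi_+) =: U$ and $T(\tilde\psi_+^{-1})(I + H(z\psi^{-1})) = T(\tilde\psi_+^{-1}) + H(z\psi_+^{-1}) =: V$, hence $N B^{-1} N' = U\bigl(I - \tfrac{1}{2}P_1\bigr) V$. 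Since $H(z\psi_+) e_0$ and $T(\psi_+) e_0$ both equal the sequence $((\psi_+)_j)_j$, we get $UP_1 = 2 T(\psi_+) P_1$; symmetrically, $P_1 V = 2 P_1 T(\tilde\psi_+^{-1})$. Therefore the claim reduces to the identity
\[
UV = T(\psi) + P_1 + 2\, T(\psi_+) P_1 T(\tilde\psi_+^{-1}).
\]

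Expanding $UV$ into four products and applying \eqref{T1}--\eqref{H1}, the relations $T(\psi_+) T(\tilde\psi_+^{-1}) = T(\psi) - H(\psi_+) H(\psi_+^{-1})$ and $H(z\psi_+)H(z\psi_+^{-1}) = T(\psi) - T(z\psi_+) T(z^{-1}\tilde\psi_+^{-1})$ produce $UV = 2T(\psi) + 2P_1 - XY$, where
\[
X := H(\psi_+) + T(z\psi_+), \qquad Y := H(\psi_+^{-1}) + T(z^{-1}\tilde\psi_+^{-1}).
\]
Thus the proof concludes by showing $XY = T(\psi) + P_1 - 2\, T(\psi_+) P_1 T(\tilde\psi_+^{-1})$. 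This I would verify by writing $T(z\psi_+) = T(z) T(\psi_+)$ and $T(z^{-1}\tilde\psi_+^{-1}) = T(z^{-1}) T(\tilde\psi_+^{-1})$, expanding into four terms, and invoking the key simplifications $T(z)T(\psi_+) T(z^{-1}) = T(\psi_+) - T(\psi_+) P_1$ (which follows from $H(z\psi_+) H(z) = T(\psi_+) P_1$) and $H(\psi_+) T(z^{-1}) = H(z\psi_+) - T(\psi_+) P_1$. The two mixed terms $\pm T(z\psi_+) H(\psi_+^{-1})$ and $\pm H(\psi_+) H(\psi_+^{-1})$ cancel in pairs, leaving exactly the right-hand side. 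The main obstacle is the bookkeeping in this last step, but once the rank-one correction $T(\psi_+) P_1 T(\tilde\psi_+^{-1})$ is correctly isolated from the $T(z)T(\psi_+)T(z^{-1})$ piece, the remaining algebra is routine.
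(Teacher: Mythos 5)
Your proposal is correct and follows essentially the same route as the paper: invert $T(\psi^{\pm 1})+H(z)$ via the rank-one (Sherman--Morrison) formula using the hypothesis $P_1T^{-1}(\psi)P_1=1$, pass to the Wiener--Hopf factors $\psi=\tilde\psi_+^{-1}\psi_+$, and verify a three-factor operator identity by direct expansion with \eqref{T1}--\eqref{T2}. The only differences are that you work out explicitly what the paper dismisses as ``a similar computation,'' and that you verify the mirror identity $NB^{-1}N'=A$ rather than the paper's $N'A^{-1}N=B$ (the two are exchanged under $\psi\mapsto\psi^{-1}$, which preserves all hypotheses); as in the paper, this directly yields only a one-sided inverse of $I+H(z\psi)$, and one should invoke this symmetry (or an injectivity argument) to conclude that $B^{-1}N'A^{-1}$ is the two-sided inverse.
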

\begin{proof}
 The inverse of $(T(\psi)+H(z))^{-1}$ is the same as $T(\psi)^{-1} ( I - \frac{1}{2} H(z)T(\psi)^{-1}) $ which can be verified using basic linear algebra. 
 Now consider the product 
 $$ T(\psi)^{-1}( I - \frac{1}{2} H(z)T(\psi)^{-1})\,(I + H( z\psi)).$$
 Using the factorization of $\psi$ we see this is the same as the operator
 $$ T(\psi_{+}^{-1}) [ (I - \frac{1}{2}H(z) )(T(\wt{\psi_{+}}) + H(z \psi_{+})] .$$
 Note that  $T(\psi_{+}^{-1})T(\wt{\psi_{+}}) = T(\psi)^{-1}$ and that 
 $$T(\psi_{+}^{-1})H(z \psi_{+}) = T(\psi_{+}^{-1}) T(\wt{\psi_{+}})H(z \psi_{+}\wt{\psi_{+}}^{-1}) = T(\psi)^{-1} H( z \psi)$$ and thus is also a bounded operator. 
 Now multiplying by $I + H( z\wt{\psi})$ and doing a similar computation yields         
 $$  (   I + H( z\wt{\psi}))   T(\psi)^{-1}( I - \frac{1}{2} H(z)T(\psi)^{-1})\,(I + H( z\psi)) = T(\wt{\psi}) + H(z).$$ which gives the desired result.
\end{proof}

 A concise way to say both of the previous theorems is:
The inverse of $M(1, \phi_{0} \psi)$ is given by
\[ (T(\psi^{-1})+H(\phi_{0}))^{-1} M(1, \phi_{0}\psi^{-1}) (T(\psi)+H(\phi_{0}))^{-1}.\]

\end{document}